\newtheorem{thm}{Theorem}[section]
\newtheorem{prop}[thm]{Proposition} 
\newtheorem{defn}[thm]{Definition}
\theoremstyle{definition} 
\newtheorem{rem}[thm]{Remark} 
\theoremstyle{definition} 
\newtheorem{conjecture}[thm]{Conjecture}
\def\O{\Omega}
\def\S{\Sigma} 
\def\n{\nabla}
\def\p{\partial}
\def\a{\alpha}
\def\n{\nabla}
\def\o{\omega}
\def\O{\Omega}
\def\p{\partial}
\def\a{\alpha}
\def\g{\gamma}
\def\d{\delta}
\def\k{\kappa}
\def\l{\lambda}
\def\s{\sigma}
\def\ov{\overline}
\def\n{\nabla}
\def\<{\langle}
\def\>{\rangle}
\def\n{\nabla}
\def\RR{\mathbb{R}}
\def\SS{\mathbb{S}}
\def\o{\omega}
\def\O{\Omega}
\def\p{\partial}
\def\F{\mathcal{F}}
\def\a{\alpha}
\def\g{\gamma}
\def\G{\mathcal{G}}
\def\d{\delta}
\def\l{\lambda}
\def\K{\mathcal{K}}
\def\s{\sigma}
\def\ov{\overline}
\def\wh{\widehat}
\def\R{\mathbb{R}}
\def\E{\mathcal{E}}
\def\C{\mathcal{C}}
\def\L{\mathcal{L}}
\def\ol{\overline}
\def\wt{\widetilde}
\def\DD{\mathcal{D}}
\patchcmd{\abstract}{\scshape\abstractname}{\textbf{\abstractname}}{}{}
\def\@makefnmark{} 
\numberwithin{equation}{section}
\numberwithin{exa}{section}
\begin{document}
\title[Capillary Gauss curvature flow]{The capillary Gauss curvature flow}
 
\author[X. Mei]{Xinqun Mei}
\address[X. Mei]{Key Laboratory of Pure and Applied Mathematics, School of Mathematical Sciences, Peking University,  Beijing, 100871, P.R. China}

\email{\href{qunmath@pku.edu.cn}{qunmath@pku.edu.cn}}

\author[G. Wang]{Guofang Wang}
\address[G. Wang]{Mathematisches Institut, Albert-Ludwigs-Universit\"{a}t Freiburg, Freiburg im Breisgau, 79104, Germany}
\email{\href{guofang.wang@math.uni-freiburg.de}{guofang.wang@math.uni-freiburg.de}}

\author[L. Weng]{Liangjun Weng}
\address[L. Weng]{Centro di Ricerca Matematica Ennio De Giorgi, Scuola Normale Superiore, Pisa, 56126, Italy \& Dipartimento di Matematica, Universit\`a di Pisa, Pisa, 56127, Italy}
\email{\href{mailto:liangjun.weng@sns.it}  {liangjun.weng@sns.it}}

\subjclass[2020]{Primary: 53C21, 35K55. Secondary: 52A20, 35B65, 35C08}

\keywords{capillary Gauss curvature flow, capillary entropy, capillary hypersurface, parabolic Monge-Amp\`ere equation, Robin boundary condition}

\begin{abstract}
In this article, we first introduce a Gauss curvature type flow for capillary hypersurfaces, which we call {\it capillary Gauss curvature flow}. We then show that the flow will shrink to a point in finite time. 
This is a capillary counterpart (or Robin boundary counterpart) of Firey's problem studied by Firey \cite{Firey} and Tso \cite{Tso85}. Finally, we prove that its normalized flow converges to a soliton. This is a capillary counterpart of the result of  Guan and Ni in \cite{GN}. The classification of solitons remains an open conjecture.
\end{abstract}

\maketitle

%\tableofcontents

 \section{Introduction}\label{sec1}
Studying the evolution of closed hypersurfaces under geometric curvature flows is an important topic in geometric analysis. It lies in the intersection of geometry, partial differential equations, and applied mathematics. One of the well-known models is the Gauss curvature flow for closed hypersurfaces, which was proposed by Firey \cite{Firey}. This model is used to describe the changing shape of a tumbling stone subjected to collisions from all directions with uniform frequency. It can be formulated as a family of smooth, strictly convex, closed hypersurfaces $\{M_{t}=X(\SS^n,t)\}_{t\geq 0}\subset \mathbb{R}^{n+1}$ 
satisfying 
\begin{eqnarray}\label{alpha-Gauss curvature flow}
   \p_t X(x,t)=-K (x,t)\nu(x,t), ~~~~x\in \SS^n, ~t\geq 0,
\end{eqnarray}
where $K(x,t)$ is the Gauss curvature and $\nu(x, t)$ is the unit outward normal at the point $X(x,t)\in M_{t}$. 

Firey \cite{Firey} investigated the asymptotic behavior of the solution to flow \eqref{alpha-Gauss curvature flow} under the assumption of the existence, regularity, and uniqueness of the solution. He proved that if the initial hypersurface $M_{0}\subset \mathbb{R}^{3}$ is centrally symmetric, then the evolving hypersurface will gradually become more and more round along flow \eqref{alpha-Gauss curvature flow} (in the sense that its normalized flow converges to a round sphere).  He conjectured that the above conclusion also holds without the symmetric assumption. Later, Tso \cite{Tso85} proved that flow \eqref{alpha-Gauss curvature flow} shrinks to a point in finite time for all dimensions. When $n=2$, Andrews \cite{And99} solved this conjecture completely, i.e., the flow converges to a round point. Recently, Guan and Ni \cite{GN} proved that its normalized flow converges to a Gauss soliton, a solution to the following equation 
\begin{eqnarray}\label{GCF similar solution}
    K(x)=\<X(x), \nu(x)\> ,\qquad x\in \SS^n.
\end{eqnarray}
Hence, in order to solve  Firey's conjecture for the general $n$, we need only to show that all solutions to \eqref{GCF similar solution} are spheres, which was confirmed very recently in a 
breakthrough work of Choi and Daskalopoulos \cite{CD}.

In this paper, we first introduce a capillary counterpart of the Gauss curvature flow. A properly embedded, smooth compact hypersurface $\S$ in $\ov{\R^{n+1}_+}$ with boundary $\partial \S \subset \p \R^{n+1}_+$ is called a \textit{capillary hypersurface}  if it intersects $\p\R^{n+1}_+$ at a constant contact angle $\theta \in (0,\pi).$ In particular, if $\theta=\frac \pi 2$, we call it \textit{free boundary hypersurface}. Denote $\wh{\S}$ be the bounded closed domain in $\ol{\RR^{n+1}_+}$, which is enclosed by $\S$ and $\p\RR^{n+1}_+$. The boundary of $\widehat{\S}$  consists of two parts: one is $\Sigma\subset\RR^{n+1}_+$ and  the other, which will be denoted by $\widehat{\p \Sigma}$,  lies on  $\p \RR^{n+1}_+$.  A simple example of a capillary hypersurface is the unit spherical cap $$\C_{\theta}\coloneqq  \left\{\xi\in \ov{\mathbb{R}^{n+1}_+} \mid ~|\xi-\cos\theta e|= 1 \right\},$$ where $-e\coloneqq E_{n+1}$ is the $(n+1)$-th unit vector of $\ol{\RR^{n+1}_+}$. 
A capillary Gauss curvature flow is a family of strictly convex capillary hypersurfaces $\S_t$ in $\ov{\R^{n+1}_+}$, given by embeddings $X\colon  M\times[0,T')\to \ol{\RR^{n+1}_+}$ and satisfying
\begin{eqnarray}\label{c-Gauss curvature flow}
   \p_t X(x,t)=-K (x,t)\tilde\nu(x,t), ~~~~~(x,t)\in M\times [0, T'),
\end{eqnarray}with a contact angle boundary condition \begin{eqnarray}\label{cap bdry condition}
    \<\nu(x,t), e\>=\cos(\pi-\theta) ,~~~~ ~ (x,t)\in \p M\times[0, T'),
\end{eqnarray}
where \begin{eqnarray}\label{cap Gauss map}
\tilde \nu\coloneqq \nu+\cos\theta e,
 \end{eqnarray}  is the capillary Gauss map of $\S_t$. See Section \ref{sec2.1} or \cite[Section 2]{MWWX} for more discussion on $\tilde \nu$. 
 
 Analogous to the classical Gauss curvature flow studied by Firey, which models the isotropic wear of convex closed hypersurfaces, the capillary version \eqref{c-Gauss curvature flow} describes the evolution of convex hypersurfaces with boundary under both curvature effects and boundary interaction. In this setting, the hypersurface evolves with normal speed equal to its Gauss curvature, while maintaining a fixed contact angle with the supporting hyperplane $\partial\mathbb{R}^{n+1}_+$. This flow provides a geometric model for the relaxation of sessile droplets on flat substrates, where motion is driven by surface tension (via Gauss curvature), and the prescribed contact angle reflects adhesion at the boundary. It captures key features of capillarity-driven spreading and the curvature-induced evolution of interfaces constrained by rigid supports. 
 
 Notice that \eqref{cap bdry condition} implies $\<\tilde \nu(x,t), e\>=0$ on $\p M\times[0, T')$. The flow \eqref{c-Gauss curvature flow} is well-defined and is equivalent to an anisotropic Gauss curvature flow 
\begin{eqnarray}\label{cap-GCF-2}
  \left( \p_t X(x,t)\right)^\perp=-\ell K (x,t)\nu(x,t), %\quad \alpha>0
\end{eqnarray}
where 
\begin{eqnarray}\label{ell}
\ell \coloneqq 1+\cos\theta \< e, \nu\>.\end{eqnarray}
We will prove in the paper that starting from a strictly convex capillary hypersurface, flow \eqref{c-Gauss curvature flow} shrinks to a point in finite time and, after normalization, converges to a soliton. Namely,  a solution to
\begin{eqnarray}\label{soliton0}
 \left\{
\begin{array}{llll}
	K   &=& \ell^{-1} {\<X,\nu\> }  & \text{ in } M ,\\ 
	\<\nu,e\>&=& -\cos\theta & \text{ on } \p M ,\end{array} \right.
\end{eqnarray}
when $\theta \in (0,\pi/2)$.
That is, we establish the capillary counterpart of the results by Tso \cite{Tso85} and Guan–Ni \cite{GN}.

 \begin{thm}\label{main-thm} 
 Let $\S_0$ be a smooth strictly convex capillary hypersurface in $\ol{\R^{n+1}_+}$ and $\theta\in(0,  \pi /2)$, then  flow \eqref{c-Gauss curvature flow} has a smooth solution $\S_t\coloneqq X(\cdot,t)$, which is strictly convex capillary hypersurface for all $t\in[0,T^*)$, where $T^*\coloneqq \frac{ {\rm{Vol}}(\widehat{\S}_0)}{ (n+1){\rm{Vol}}(\widehat{\C_{\theta}})}  .$  As $t\rightarrow T^{\ast}$, then $\S_t$ shrink to a point $p\in \p{\RR^{n+1}_{+}}$.  Moreover, the rescaled capillary hypersurfaces 
 \begin{eqnarray}\label{rescaled-hypersurface}
  \widetilde{\S}_{t}\coloneqq \left(\frac{ {\rm{Vol}}(\widehat{\C_{\theta}})}{ {\rm{Vol}}(\wh\S_t )}\right)^{\frac{1}{n+1}}(\S_{t}-p),
 \end{eqnarray}
 converge to a smooth, strictly convex capillary hypersurface $\S_\infty\subset \ol{\RR^{n+1}_+}$ as  $t\to +\infty$, which is a solution to \eqref{soliton0}. 
\end{thm}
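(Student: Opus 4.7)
The plan is to follow the four-stage analysis standard for Gauss-curvature-type flows (short-time existence, uniform a priori estimates, finite-time extinction, rescaling and convergence to a soliton), adapted to the oblique Robin setting forced by the contact angle. The key new analytic tool is the capillary Gauss map $\tilde \nu = \nu + \cos\theta\, e$, which for strictly convex capillary hypersurfaces is a diffeomorphism $\S_t \to \C_\theta$; this allows $\S_t$ to be reparameterized by its support function $u(\cdot,t)$ on the fixed spherical cap $\C_\theta$. In this picture, \eqref{c-Gauss curvature flow}--\eqref{cap bdry condition} becomes a parabolic Monge--Amp\`ere equation for $u$ with an oblique boundary condition on $\p \C_\theta$, and short-time existence follows from the standard theory of fully nonlinear parabolic problems with oblique boundary conditions.

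The a priori estimates I would establish, in order, are: a $C^0$ bound on $u$ (equivalently a containment bound) by comparison with shrinking capillary caps; a $C^1$ bound from the boundary condition and the support function; two-sided bounds on the Gauss curvature $K$, where the upper bound follows from a Tso-type auxiliary function $K/(u-c)$ and the lower bound (preserving strict convexity) from an Andrews/Chow-style barrier argument; and finally a uniform upper bound on the principal curvatures. Together these yield uniform parabolicity on every $[0,T^{\ast}-\delta]$, and Krylov--Safonov / Lieberman theory for oblique problems then promotes the estimates to $C^{k,\a}$, yielding long-time existence up to $T^{\ast}$.

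Finite-time extinction follows from a capillary Minkowski-type identity
\begin{equation*}
\int_{\S_t} \ell K\, dA \;=\; (n+1)\, \mathrm{Vol}(\widehat{\C_\theta}),
\end{equation*}
obtained by the change of variables formula along $\tilde \nu$ combined with the divergence theorem on $\widehat{\C_\theta}$, using $\<\tilde \nu, e\> = 0$ on $\p \C_\theta$. Together with the volume evolution
\begin{equation*}
\tfrac{d}{dt} \mathrm{Vol}(\widehat{\S}_t) \;=\; -\int_{\S_t} \ell K\, dA,
\end{equation*}
where no boundary term appears because $\<\tilde\nu,e\>=0$ on $\p\S_t$, this gives linear decay of the enclosed volume and hence extinction precisely at $T^{\ast}$. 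The uniform estimates then force the diameters to shrink to zero, and the capillary condition forces the collapse point $p$ to lie on $\p \RR^{n+1}_+$.

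For the rescaled flow $\widetilde{\S}_t$ in \eqref{rescaled-hypersurface}, a logarithmic reparameterization of time produces a flow defined on $[0,\infty)$ whose stationary points are exactly the solitons \eqref{soliton0}. Convergence is proved in the spirit of Guan--Ni by a \emph{capillary entropy} functional, the Robin analog of $-\int \log u$ weighted by the capillary measure $\ell\, dA$ on $\C_\theta$; it is monotone along the normalized flow, bounded below, and its time-derivative vanishes exactly on solitons, yielding subsequential convergence and, by the uniform regularity, full convergence to a limit $\S_\infty$ solving \eqref{soliton0}. The two hardest points will be (a) propagating strict convexity up to the boundary under the oblique condition, which requires a Hopf-type argument where the Robin boundary term happens to carry the correct sign, and (b) identifying the correct capillary entropy and verifying that the boundary contributions in its monotonicity computation vanish \emph{precisely} because of the orthogonality relation $\<\tilde\nu, e\> = 0$ on $\p \S_t$.
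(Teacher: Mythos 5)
Your outline of the unnormalized part of the flow matches the paper's proof essentially step for step: the scalar Monge--Amp\`ere/Robin reduction on $\C_\theta$, short-time existence from oblique parabolic theory, comparison with shrinking caps, the Tso-type function $K/(u-c_0)$ for the upper Gauss curvature bound, the boundary sign computation for the harmonic curvature sum (the paper's $\Phi=\Delta h+nh$, with $\n_\mu\Phi\le 0$ on $\p\C_\theta$ precisely because $\theta<\pi/2$), and the identification of $T^*$ from the exact linear volume decay $\frac{d}{dt}{\rm Vol}(\wh\S_t)=-\int_{\S_t}\ell K\,d\mu=-(n+1){\rm Vol}(\wh{\C_\theta})$ combined with the curvature pinching that rules out collapse to a lower-dimensional set. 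All of this is what the paper does in Section \ref{sec3}.

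The genuine gap is in the convergence of the rescaled flow. You reduce it to ``the capillary entropy is monotone, bounded below, and its derivative vanishes on solitons, yielding subsequential convergence and, by the uniform regularity, full convergence,'' but two of these assertions are exactly the hard points and are not supplied. First, the lower bound for the entropy is not free: the paper proves it via a capillary Blaschke--Santal\'o inequality (Proposition \ref{prop-santalo-ineq}), obtained by reflecting $\wh\S$ across $\p\RR^{n+1}_+$ and comparing the capillary polar body with the classical polar body of the doubled convex body; some such input must be produced before ``bounded below'' can be asserted. Second, and more seriously, monotone-plus-bounded entropy does not by itself yield ``uniform regularity'' of the normalized flow: one must rule out that the rescaled bodies drift so that the origin approaches $\S_t$, i.e., one needs a uniform positive lower bound for the capillary support function $u(\cdot,t)$ (the paper's Theorem \ref{thm-C0}). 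This is the linchpin of the whole second half --- without it the test function $\log(Ku^\gamma)$ used for the lower Gauss curvature bound of the normalized flow gives nothing --- and the paper explicitly notes that the analogous bound fails for general anisotropic Gauss curvature flows, so it cannot be taken for granted. Obtaining it requires the chain: the entropy controls the capillary inner and outer radii (Proposition \ref{radius}); a quantitative stability estimate forcing the entropy point of $\wh\S_t$ to converge to the origin; and a compactness argument for (possibly non-smooth) Hausdorff limits of capillary convex bodies showing the entropy point stays a definite distance from the lateral boundary (Theorem \ref{thm-lim-Hausdorff-distance}). None of this appears in your plan, so as written the passage from entropy monotonicity to convergence to a solution of \eqref{soliton0} does not close.
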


It is easy to check that the spherical cap $\C_{\theta}$ %\coloneqq  \left\{\xi\in \ov{\mathbb{R}^{n+1}_+} \mid ~|\xi-\cos\theta e|= 1 \right\}$$ 
is a solution of Eq. \eqref{soliton0}. It is very interesting to classify all soliton solutions to \eqref{soliton0}. We propose the following conjecture,  serving as the capillary counterpart of the results in \cite{And99} and \cite{CD}.

\
\begin{conjecture}
  {\it   For $\theta\in(0,\pi)$, the spherical cap $\C_{\theta}$ is the unique strictly convex solution to Eq. \eqref{soliton0}.}
\end{conjecture}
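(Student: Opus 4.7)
The plan is to transfer to the capillary setting the strategy by which Choi and Daskalopoulos \cite{CD} resolved the closed Firey conjecture, using the support-function formulation with respect to the capillary Gauss map $\tilde\nu$ developed in \cite{MWWX}. The two main ingredients would be: (i) recasting \eqref{soliton0} as a Monge--Amp\`ere-type equation on the spherical cap $\C_\theta$ with a conormal boundary condition, and (ii) running an auxiliary-function maximum principle compatible with that boundary condition, with the goal of forcing umbilicity.

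\textbf{Step 1 (Support-function reformulation).} A strictly convex solution $X\colon M\to \ol{\RR^{n+1}_+}$ of \eqref{soliton0} is globally parametrized by $\tilde\nu$ with image on a spherical cap $\C_\theta$. Let $h=\<X,\nu\>$ denote the capillary support function, viewed as a function on $\C_\theta$ with its induced round metric $\s$, and set $b_{ij}\coloneqq \n^2_{ij} h+h\,\s_{ij}$, the principal radii tensor. Since $K=1/\det(b_{ij})$ and $\ell$ is a fixed linear function of the ambient coordinates, the soliton PDE takes the Monge--Amp\`ere form
\begin{equation*}
h\,\det(\n^2 h + h\,\s)=\ell \qquad \text{on } \C_\theta,
\end{equation*}
while the contact-angle condition \eqref{cap bdry condition}, after a standard computation, reduces to a homogeneous Neumann-type condition for $h$ on $\p\C_\theta$. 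Thus the problem is turned into a Monge--Amp\`ere boundary value problem on the spherical cap.

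\textbf{Step 2 (Auxiliary test quantity).} Introduce the deficit $Q\coloneqq \log\bigl(\ell K/h\bigr)$, which vanishes identically on any soliton, and a second-order companion quantity built from $\n Q$, $b_{ij}$, and its inverse, in the spirit of \cite{CD}. Following their scheme one applies the linearized Monge--Amp\`ere operator to this companion quantity. The aim is to prove that it is nonpositive and attains its maximum only when $b_{ij}=c\,\s_{ij}$, i.e. umbilicity. A Minkowski-type integral identity associated to \eqref{soliton0}, combined with the contact-angle condition, then pins down $c$ and identifies the solution as $\C_\theta$.

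\textbf{Step 3 (Boundary analysis).} The interior maximum principle proceeds formally as in the closed case. The genuinely new difficulty, and the principal obstacle, is ruling out boundary maxima on $\p\C_\theta$: one must differentiate the homogeneous Neumann condition for $h$ tangentially along the boundary, use the constancy of $\<\nu,e\>$ on $\p M$, and produce a Hopf-type inequality for the companion quantity. The \cite{CD} tensor is already highly delicate in the closed setting, and adapting it so that its conormal derivative is controlled by the capillary boundary condition is precisely the content of the open conjecture. A complementary route, which may ultimately be the more tractable one, is indirect: establish strict monotonicity of a capillary entropy along flow \eqref{c-Gauss curvature flow} with equality characterizing solitons, and combine it with the long-time convergence established in Theorem~\ref{main-thm} and a blow-down/blow-up compactness analysis; comparing the two limits would force the soliton to coincide with the entropy minimizer $\C_\theta$ without producing a pointwise auxiliary tensor at the boundary.
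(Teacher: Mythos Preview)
The statement you are attempting to prove is explicitly posed in the paper as an \emph{open conjecture}; the paper does not prove it and indeed remarks that ``the classification of solitons remains an open conjecture.'' So there is no paper-proof to compare against, and any valid proof would be a new result.

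Your proposal has concrete gaps. In Step~2 you introduce $Q=\log(\ell K/h)$ and note that it vanishes identically on any soliton; but then the ``second-order companion quantity built from $\n Q$'' is also identically zero and carries no information. The actual Choi--Daskalopoulos argument does not start from a quantity that is trivially zero on the soliton; it works with the extremal eigenvalues of the principal-radii tensor $b_{ij}=\n^2 h+h\s$ (equivalently, with a carefully chosen function of $b$ and its inverse) and applies a strong tensor maximum principle to force $b_{ij}=c\,\s_{ij}$. Whatever auxiliary quantity you pick, it must be nontrivial on a putative non-umbilic soliton, and you have not specified one. Moreover, in Step~3 you yourself concede that controlling its conormal derivative on $\p\C_\theta$ ``is precisely the content of the open conjecture,'' which means the proposal is at present a strategy outline with the decisive step missing.

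The alternative ``indirect'' route via entropy is also incomplete. Monotonicity of the capillary entropy along the normalized flow (Proposition~\ref{prop-mono}) and convergence to a soliton (Theorem~\ref{GCF-normal to soliton}) only show that the limiting soliton has entropy no larger than the initial datum; they do not show that every soliton is the global entropy minimizer, nor that the minimizer is $\C_\theta$. Upgrading this would require the sharp capillary Blaschke--Santal\'o inequality (Conjecture~\ref{conj-sharp-capillary-B-S-ineq}) together with a rigidity statement identifying solitons with entropy minimizers, neither of which is available. As stated, the indirect argument is circular.
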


\

Remark that if $n=1$ and if we further assume that $\S$ is a symmetric (along $\{(x_1,x_2)\in \RR^2_+ \mid x_1=0\}$) curve in $\overline{\RR^2_+}$, then a result %of B\"{o}r\"{o}czky–Lutwak–Yang–Zhang 
in \cite[Theorem~5.2]{BLYZ2012} implies that the conjecture is true.

 It is clear that if this conjecture is true, our result  Theorem \ref{main-thm} implies  that the rescaled capillary hypersurfaces as in \eqref{rescaled-hypersurface} 
converge to a uniquely determined spherical cap, at least for $\theta <\pi/2$.

 Our flow \eqref{c-Gauss curvature flow} or \eqref{cap-GCF-2} is closely related to the anisotropic Gauss curvature flow.
 Characterizing the asymptotic behavior of the evolution starting from a  closed convex hypersurface under the anisotropic Gauss curvature flow
\begin{eqnarray}\label{aniso-G-flow} 
\p_t X = -f(\nu)K \nu, \end{eqnarray}
is always a challenging problem. Here $f(\nu)$ is the anisotropic effect function being evaluated at the unit normal vector $\nu$. Andrews \cite{And00} proved that such a flow will contract any smooth, convex, closed hypersurface to a point in finite time. Very recently, B\"{o}r\"{o}czky-Guan \cite{BG}  introduced an entropy functional for flow \eqref{aniso-G-flow}, and established the weak convergence of flow \eqref{aniso-G-flow} and applied it to study the weak solution of the logarithmic Minkowski problem. In the smooth category,  the logarithmic Minkowski problem reduces to
\begin{eqnarray}\label{log-mink}
    h\det(\n^2 h+h\s)=f, ~~~{\rm{in}}~\SS^n, 
\end{eqnarray}  
which is the soliton equation associated with flow \eqref{aniso-G-flow}. It is a well-known conjecture (log-Brunn–Minkowski conjecture) to prove the uniqueness of the solution to \eqref{log-mink} when $f$ is an even, positive smooth function. When $n=1$, this conjecture was confirmed by Gage \cite{Gag93} for symmetric convex bodies in $\RR^2$.
For a general $f$, Yagisita \cite{Yag} constructed two different positive solutions to \eqref{log-mink} for a non-even function $f$ in $\SS^1$. Very recently, Chen–Feng–Liu \cite{CFL} established the uniqueness of solutions to Eq.~\eqref{log-mink} for $n=2$ under the assumption that $f$ is sufficiently close to $1$ in the $C^\alpha$ norm, without requiring evenness. This result was extended to higher dimensions by B\"ar\"oczky–Saroglou \cite{BS24}. By viewing $f=\ell$ in \eqref{log-mink}, we observe that Conjecture 1.2 aligns with the log-Brunn-Minkowski conjecture, except for the presence of an additional boundary condition. For further developments related to \eqref{log-mink}, we refer to a recent survey by B\"or\"oczky \cite{Boroczky2023survey} and the references therein, including \cite{BLYZ2012, BLYZ2013, BS24, CFL, CHLL, KM2022, Sar2015}, among others.

In our setting, it is easy to observe that flow \eqref{cap-GCF-2} is equivalent to \eqref{aniso-G-flow} by viewing $$f(\nu)\coloneqq 1+\cos\theta \< \nu,e\>,$$ except for the presence of an additional boundary condition. From this perspective, the capillary Gauss curvature flow in a Euclidean half-space is closely related to the study of the corresponding anisotropic Gauss curvature flow.

\

Now we outline the main ideas of the proof.
First of all, we establish a Blaschke-Santaló theory for capillary hypersurfaces by suitably generalizing concepts from convex geometry, which is of independent interest and plays a crucial role in the introduction of a capillary entropy later. Then we split the statement of Theorem \ref{main-thm} into two parts,  Theorem~\ref{main-existence} and Theorem \ref{GCF-normal to soliton}. In the first part,  by using the strategy presented in Tso \cite{Tso85} (see also \cite[Section~15.3]{ACGL}), we prove that flow \eqref{c-Gauss curvature flow} shrinks to a point in finite time, as stated in Theorem \ref{main-existence}.  The comparison principle implies that the finite time existence of flow \eqref{c-Gauss curvature flow}, and then assuming that the inner capillary radius of evolving capillary bodies has a uniformly positive lower bound, we can establish a priori estimates for the solution of flow \eqref{c-Gauss curvature flow} in $[0, T^{\ast})$. In our case, the difficulties arising from boundaries need to be addressed. Therefore, we introduce some new auxiliary functions, which are well adapted to capillary problems. 
To derive the principal curvatures bound along flow \eqref{c-Gauss curvature flow}, we introduce a new test function
\begin{eqnarray*}
    \varphi \coloneqq \frac{K}{u-c_0}, 
\end{eqnarray*}where $u\coloneqq \ell^{-1}{\<X,\nu\>}$ and $c_0>0$ is a constant. Here, $u$ is the capillary support function. A key observation is that it satisfies a homogeneous Neumann boundary condition along $\partial \C_\theta$, see Eq. \eqref{neumann-tso-fun}, a similar idea is also used in other problems related to capillary hypersurfaces, see \cite{MWW2024, %MWW-Lp-Christoffel-Minkowski,MWW-Lp-Minkowski, 
SW, WWX23}. It turns out that the two-sided estimates for the Gauss curvature hold for all $\theta \in (0, \pi)$. The range $\theta < \pi/2$ will be used to derive the two-sided estimates of the principal curvatures, in particular to ensure that Eq.~\eqref{boundary of Phi} holds along $\p\C_\theta$. Next, we transform flow \eqref{c-Gauss curvature flow} into a parabolic Monge-Amp\`ere equation with Robin boundary condition, and then combining with the a priori estimates results, we derive a contradiction that $T^{\ast}$ is not the maximal existence time for flow \eqref{c-Gauss curvature flow}. This implies that $\text{Vol}(\wh\Sigma_t)$ converges to zero as $t\rightarrow T^{\ast}$. Using the fact that the principal curvatures of $\S_{t}$ have a uniformly positive lower bound, we conclude that flow \eqref{c-Gauss curvature flow} shrinks to a point. 

In the second part, we investigate the asymptotic behavior of flow \eqref{c-Gauss curvature flow} and show Theorem \ref{GCF-normal to soliton}. To begin with, we introduce a capillary entropy $\E_\theta(\wh\S)$ for the capillary convex body $\wh\S\subset \ol{\RR^{n+1}_+}$, which is defined as  
\begin{eqnarray*}
    \mathcal{E}_{\theta}(\wh\S)\coloneqq \sup_{z_0\in {\rm{int}}(\widehat{\partial\S})}    \E_{\theta}(\wh\S,z_0),
\end{eqnarray*} where
\begin{eqnarray*}
    \E_{\theta}(\wh\S,z_0)\coloneqq 
        \frac 1 {\omega_\theta}\int_{\C_\theta} \left(\log u_{z_0}(\xi)\right) \ell(\xi)  d\s,
\end{eqnarray*}
where $\ell(\xi)\coloneqq \sin^2\theta+\cos\theta\<\xi, e\>$, the restriction of $\ell$ defined by \eqref{ell} on $
\C_\theta$, $u_{z_0}(\xi)$ is the capillary support function w.r.t. $z_0$ and $\omega_\theta\coloneqq |\C_\theta|-\cos\theta |\wh{\p\C_\theta}|$ is the capillary area of $\C_\theta$, see Eq. \eqref{capillary support} and Definition \ref{def-cap-entropy} for more details. Then we prove that the capillary entropy is non-increasing along the normalized flow \eqref{GCF-capillary-normalized} in Proposition \ref{prop-mono}, with strict monotonicity unless it is a soliton. This is a capillary analogous result to the closed setting by Guan-Ni \cite[Theorem 3.4]{GN}. Adopting the strategy as in \cite{GN}, we further obtain the estimates
 \begin{eqnarray*} 
         \rho_{+}(\widehat{\S}, \theta)\leq C e^{\mathcal{E}_{\theta}(\widehat\S)},
     \end{eqnarray*}
     and 
     \begin{eqnarray*} 
         \rho_{-}(\widehat{\S}, \theta)\geq C^{-1}{\rm{Vol}}(\widehat{\S}) e^{-n \mathcal{E}_{\theta}(\widehat \S)},
     \end{eqnarray*} where $\rho_{-}(\widehat{\S}, \theta)$ and $\rho_{+}(\widehat{\S}, \theta)$ are the capillary inner and outer radius of a capillary convex body $\widehat\S$ respectively (cf. Sinestrari-Weng \cite[Section 2.2]{SW} or Section \ref{sec2.1} for the precise definition).
  Adapting these estimates to the flow \eqref{GCF-capillary-normalized}, we can prove that the capillary support function $u(x, t)$ admits a positive lower bound, i.e., Theorem \ref{thm-C0}, which is one of the crucial points in the paper and does not hold for a general anisotropic Gauss curvature flow \eqref{aniso-G-flow}, cf. \cite[Section 1]{BG}. Then we prove the uniform $C^{2}$ estimates for the solution of normalized capillary Gauss curvature flow:
  \begin{eqnarray}\label{normalized-cap-GCF}
      \p_t X(x,t)= X(x,t)-K\tilde \nu(x,t),~~~(x,t)\in M\times[0,+\infty),
  \end{eqnarray} 
  or Eq. \eqref{GCF-capillary-normalized}. In deriving a uniform lower bound for the Gauss curvature of the solution to Eq. \eqref{normalized-cap-GCF}, we introduce a new test function
\begin{eqnarray}
P \coloneqq \log(K u^\gamma),
\end{eqnarray}
for some $\gamma > 0$ sufficiently large. The advantage of $P$ is that it satisfies a homogeneous Neumann boundary condition along $\partial \C_\theta$, see Eq.~\eqref{P-neumann}. A similar idea was also used in \cite[Theorem~5.2]{AGN} for the closed case.  
  Along  flow \eqref{normalized-cap-GCF}, the enclosed volume $\wh\S_t$ of $X(M,t)$ with $\p\RR^{n+1}_+$ is preserved. Finally, by combining the monotone property of capillary entropy and the uniform a priori estimates of the solution, we show that the solution of normalized flow \eqref{normalized-cap-GCF} converges to a self-similar solution of flow \eqref{c-Gauss curvature flow}, namely Eq. \eqref{soliton0}.

  \

\noindent{\it Organization of the rest of the paper.} 
In Section \ref{sec2}, we first collect some basic properties of the capillary convex body. Subsequently,  we introduce the concept of a capillary polar convex body and establish a capillary Blaschke-Santal\'o inequality. In Section \ref{sec3}, we prove that the flow \eqref{c-Gauss curvature flow} will shrink to a point. Section \ref{sec4} is devoted to investigating the asymptotic behavior of flow \eqref{c-Gauss curvature flow} and proving that the normalized flow will converge to the self-similar solution of flow \eqref{c-Gauss curvature flow}.  Section \ref{sec5} provides a discussion on certain generalizations to the capillary $\a$-power Gauss curvature flow. We introduce the concept of $\theta$-capillary convex bodies by removing the smoothness assumption and define its corresponding capillary entropy functional as an appendix in the last Section \ref{sec-6}.
 
\section{Preliminaries on  convex capillary  hypersurfaces}\label{sec2}
First, we collect some basic facts about capillary convex bodies. Subsequently, we introduce the concept of duality for capillary convex bodies and establish a Blaschke–Santal\'o type inequality, which serves as the capillary analogue of the classical Blaschke–Santal\'o inequality.

\subsection{Capillary hypersurface}\label{sec2.1}
Assume $\S\subset\ol{\RR^{n+1}_+}$ is a capillary hypersurface in the sense of the definition in Section \ref{sec1}. Let $\mu$ be the unit outward co-normal of $\p\S$ in $\S$ and  $\overline{\nu}$ the unit outward normal to $\partial\Sigma$ in $\partial\ov{\mathbb{R}^{n+1}_+}$ such that $\{\nu,\mu\}$ and $\{\overline{\nu},e\}$ have the same orientation in the  normal bundle of $\partial\Sigma\subset\ov{{\R}^{n+1}_+}$. Namely, along $\partial\S$, there holds\begin{eqnarray}\label{co-normal bundle}
		\begin{array} {rcl}
			e &=&\sin\theta \mu-\cos\theta \nu,
			\\
			\overline{\nu} &=&\cos\theta \mu+\sin\theta \nu.
		\end{array}
\end{eqnarray}
 Let $\widehat{\S}$ be the bounded closed region in $\overline{\mathbb{R}^{n+1}_+}$ enclosed by $\S$ and $\partial \mathbb{R}^{n+1}_+$. The boundary of $\widehat{\S}$ consists of two components: one is $\Sigma$, and the other, denoted by $\widehat{\partial \Sigma}$, lies on $\partial \mathbb{R}^{n+1}_+$. We call such $\wh\S$ the \textit{capillary convex body} and denote $\mathcal{K}_\theta$ as the set of all capillary convex bodies in $\ol{\mathbb{R}^{n+1}_{+}}$, and $\mathcal{K}^{\circ}_{\theta}$ as the family of capillary convex bodies for which the origin is an interior point of their flat boundary.

We collect the notion of support function and capillary support function for the capillary convex bodies in $\K_\theta$.
\begin{defn}[Capillary support function]\label{defn-capillary-support}
Let $\wh\Sigma\in \K_\theta$, the support function $h_{z}$ of ${\S}$ with respect to $z\in\widehat{\S}$ is defined by
 \begin{eqnarray*}
      h_{z} (\xi)\coloneqq   \max _{X\in \Sigma}\< \xi-\cos\theta e, X-z\>, \quad \xi\in \C_{\theta}.
	\end{eqnarray*}
The capillary support function of $\S$ with respect to $z$ is defined by
 \begin{eqnarray}\label{capillary support}
     u_{z}(\xi)\coloneqq \frac{h_{z}(\xi)}{\ell (\xi)}, ~~~~\xi\in \C_\theta.
 \end{eqnarray}
\end{defn}
Without confusion, we will abbreviate $h$ and $u$ as the support function and the capillary support function with respect to the origin, respectively. Otherwise, we will write them explicitly with respect to an associated point.
One may refer to \cite[Remark~2.3]{MWW} for the geometric meaning of the capillary support function $u_{z}$. 

Given any strictly convex capillary hypersurface $\S$ with $\wh\S\in \K_\theta^\circ$, we can parametrize it by the inverse capillary Gauss map \eqref{cap Gauss map}, namely
\begin{eqnarray}
    X(\xi)\coloneqq \tilde \nu^{-1}(\xi)\colon  &\C_{\theta} &\rightarrow \S,\notag \\
& \xi &\mapsto \n h(\xi)+h(\xi)(\xi-\cos\theta e) ,\label{capillary-Gauss-map-para}
\end{eqnarray} (cf. \cite[Lemma~2.4]{MWWX}).
The principal radii of $\S$ at $X(\xi)$
 are given by the eigenvalues of $(\n^{2}h+h\sigma)$
 with respect to the round metric $\sigma$ of $\C_\theta$.  On the other hand, for any function $h\in C^{2}(\C_{\theta})$  that satisfies 
\begin{eqnarray*}
    \n^{2}h(\xi)+h(\xi) \sigma >0, ~~~ \forall ~\xi\in\C_\theta,
\end{eqnarray*}
and
\begin{eqnarray*} 
    \n_{\mu}h=\cot\theta  h, \quad \text{on  }~\partial\C_{\theta},
\end{eqnarray*}
then $h$ is the support function of a capillary convex body $\wh\S\in \K_\theta$, cf. \cite[Proposition 2.6]{MWWX}.

To derive a priori estimates for flow \eqref{c-Gauss curvature flow} in the next section, it is convenient to use the notion of the capillary inner/outer radius of $\widehat\Sigma$ (see, e.g.,   
\cite[Section 2.2]{SW}) and is a natural adaptation of the classical notion for a convex closed hypersurface. For a convex capillary hypersurface $\S\subset \ol{\RR^{n+1}_+}$ and  all fixed $\theta\in(0,\pi)$, the capillary inner radius of $\wh\S$ is defined as

\begin{eqnarray*}  
\rho_{-}(\widehat\Sigma, \theta)\coloneqq \sup \left\{r>0 ~\mid ~ \widehat{\C_{r,\theta}(x_0)}\subset \widehat\S\text{ for some } x_0\in \p  \ov{\RR^{n+1}_+}\right\},\end{eqnarray*}and 
 the capillary outer radius of $\wh\Sigma$ is defined   as
\begin{eqnarray}\label{defn-capillary-outer-radius}  
\rho_+(\widehat\Sigma, \theta)\coloneqq \inf \left\{r>0 ~\mid ~\widehat\S\subset \widehat{\C_{r,\theta}(x_0)} \text{ for some } x_0\in  \p \ov{\RR^{n+1}_+} \right\},
\end{eqnarray}where 
\begin{eqnarray*}
    \C_{r,\theta}(x_0)\coloneqq \{x\in \ov{\RR^{n+1}_+} ~\mid~ |x-(x_0+r\cos\theta  e)|=r\}
\end{eqnarray*} 
is the spherical cap centered at the point $x_0+r\cos\theta e$ with the radius $r>0$.  

\begin{rem}\label{rem-cap-inner-radius}
From the definition of $\rho_-(\wh\S,\theta)$, it is easy to see that if the capillary inner radius of $\wh\S$ has a positive lower bound $\varepsilon_0$  for some $\varepsilon_0>0$, then its associated capillary support function \eqref{capillary support} satisfies $$u_{z_0}(\xi)\geq \varepsilon_0, ~~\text{ for all } \xi\in \C_\theta,$$ for some $z_0\in \text{int}(\wh{\p\S})$ (see also \cite[Remark 2.3]{MWW}). 

\end{rem}

Given a convex  body $\wh\S\subset \RR^{n+1}$, recall that we have the classical notion of the inner radius of $\widehat{\S}$, which is defined as   
\begin{eqnarray*}    \rho_{-}(\widehat{\S})\coloneqq \sup \left\{\rho>0 \mid B_{\rho}^{+}(x_{0})\subset \widehat{\S}~\text{for~some~}x_{0}\in \partial \ov{\RR^{n+1}_{+}} \right\},\end{eqnarray*} and the outer radius of $\S$ is defined as
\begin{eqnarray*}    \rho_{+}(\widehat{\S})\coloneqq \inf \left\{\rho>0 \mid \widehat{\S}\subset B_{\rho}^{+}(x_{0})~\text{for~some~}x_{0}\in \partial \ov{\RR^{n+1}_{+}} \right\}\end{eqnarray*} where $B^{+}_{\rho}(x_{0})\coloneqq B_{\rho}(x_{0})\cap \ol{\RR^{n+1}_+}$ and $B_{\rho}(x_0)$ is an open ball of radius $\rho$ centered at $x_0$ in $\RR^{n+1}$.

We next show that for capillary convex bodies, the inner (outer) radius and the capillary inner (outer) radius can be mutually controlled if $\theta\in(0,\pi/2)$.

\begin{prop}
    Let $\widehat{\S}\in \K_{\theta}$ and $\theta \in (0, \pi/2)$. Then there hold
    \begin{eqnarray}\label{inner radius}
         (1-\cos\theta )\rho_{-}(\widehat{\S}, \theta)\leq \rho_{-}(\widehat{\S})\leq \sin\theta \rho_{-}(\widehat{\S}, \theta),
    \end{eqnarray}
    and 
    \begin{eqnarray}\label{outer radius}
          (1-\cos\theta )\rho_{+}(\widehat{\S}, \theta)\leq \rho_{+}(\widehat{\S})\leq \sin\theta \rho_{+}(\widehat{\S}, \theta),
    \end{eqnarray}
\end{prop}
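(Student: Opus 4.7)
The plan is to deduce all four inequalities from a single two-sided geometric sandwich between the reference spherical cap and concentric half-balls, namely
\begin{eqnarray*}
B^{+}_{r(1-\cos\theta)}(x_0)\ \subseteq\ \widehat{\C_{r,\theta}(x_0)}\ \subseteq\ B^{+}_{r\sin\theta}(x_0)
\end{eqnarray*}
for every $x_0\in \p\ol{\RR^{n+1}_+}$ and every $r>0$. Rewriting $\widehat{\C_{r,\theta}(x_0)}=\ol{B}_{r}(x_0+r\cos\theta\, e)\cap \ol{\RR^{n+1}_+}$ and recalling that $e=-E_{n+1}$ together with $\cos\theta>0$ places the centre below the supporting hyperplane, the left inclusion is immediate from the triangle inequality: for $p\in B^{+}_{r(1-\cos\theta)}(x_0)$,
\begin{eqnarray*}
|p-(x_0+r\cos\theta\, e)|\leq |p-x_0|+r\cos\theta\leq r(1-\cos\theta)+r\cos\theta=r.
\end{eqnarray*}
For the right inclusion, if $p\in\widehat{\C_{r,\theta}(x_0)}$ and $q:=p-x_0$, then $\<q,E_{n+1}\>\geq 0$ and expanding $|q-r\cos\theta\, E_{n+1}|^{2}\leq r^{2}$ yields
\begin{eqnarray*}
|q|^{2}\leq r^{2}\sin^{2}\theta-2r\cos\theta\<q,E_{n+1}\>\leq r^{2}\sin^{2}\theta,
\end{eqnarray*}
so $|p-x_0|\leq r\sin\theta$.

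Once this sandwich is established, all four inequalities in \eqref{inner radius} and \eqref{outer radius} follow by chaining inclusions and passing to the appropriate sup/inf in the definitions of $\rho_{\pm}(\wh\S)$ and $\rho_{\pm}(\wh\S,\theta)$. For the inner radius, every cap $\widehat{\C_{r,\theta}(x_0)}\subseteq \wh\S$ contains $B^{+}_{r(1-\cos\theta)}(x_0)$, so taking the supremum over admissible $r$ gives $(1-\cos\theta)\rho_{-}(\wh\S,\theta)\leq \rho_{-}(\wh\S)$; conversely every half-ball $B^{+}_{\rho}(y_0)\subseteq \wh\S$ contains the cap $\widehat{\C_{\rho/\sin\theta,\theta}(y_0)}$ (applying the right inclusion with $r=\rho/\sin\theta$), which yields $\rho_{-}(\wh\S,\theta)\geq \rho_{-}(\wh\S)/\sin\theta$. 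The outer-radius inequalities follow from the dual chains $\wh\S\subseteq \widehat{\C_{r,\theta}(x_0)}\subseteq B^{+}_{r\sin\theta}(x_0)$ and $\wh\S\subseteq B^{+}_{\rho}(y_0)\subseteq \widehat{\C_{\rho/(1-\cos\theta),\theta}(y_0)}$ by taking infima.

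There is no serious obstacle beyond verifying the sandwich lemma. The only point worth emphasising is that the hypothesis $\theta<\pi/2$ is used essentially: $\cos\theta>0$ is what pushes the cap centre into the lower half-space, without which the triangle-inequality proof of the left inclusion fails; and the same inequality $\theta<\pi/2$ gives $1-\cos\theta\leq \sin\theta$, keeping the two sandwich constants mutually consistent. For $\theta\geq \pi/2$ the corresponding comparison would require a different geometric argument (and a different form of the inequalities).
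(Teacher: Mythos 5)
Your proof is correct and follows essentially the same route as the paper's: both rest on the sandwich $B^{+}_{r(1-\cos\theta)}(x_0)\subseteq \widehat{\C_{r,\theta}(x_0)}\subseteq B^{+}_{r\sin\theta}(x_0)$, except that the paper verifies only the first inclusion and cites Sinestrari--Weng for the two $\sin\theta$ bounds, whereas you prove both inclusions directly and then chain them. One cosmetic slip: since $e=-E_{n+1}$, the condition $p\in\widehat{\C_{r,\theta}(x_0)}$ reads $|q+r\cos\theta\,E_{n+1}|\le r$ rather than $|q-r\cos\theta\,E_{n+1}|\le r$; expanding the correct expression gives exactly your displayed inequality $|q|^{2}\le r^{2}\sin^{2}\theta-2r\cos\theta\<q,E_{n+1}\>$, so the conclusion stands.
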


\begin{proof}
  We begin by proving that \eqref{inner radius} holds.
 From the definition of capillary inner radius, there exists a spherical cap $\C_{r_{0}, \theta}(x_{0})$ with radius $r_{0}=\rho_{-}(\widehat{\S}, \theta)$, such that $\widehat{\C_{r_{0}, \theta}(x_{0})}$ is contained in $\widehat{\S}$. It is obvious that $$B_{(1-\cos\theta)r_{0}}^{+}(x_{0})\subseteq \widehat{\C_{r_{0}, \theta}(x_{0})}\subseteq \widehat{\S},$$
 which implies 
    \begin{eqnarray*}
        (1-\cos\theta )\rho_{-}(\widehat{\S}, \theta)\leq \rho_{-}(\widehat{\S}).
    \end{eqnarray*}
On the other hand, from \cite[Proposition 2.4, Eq. (2.26)]{SW}, we get
\begin{eqnarray*}
    \rho_{-}(\widehat{\S}, \theta)\geq \frac{\rho_{-}(\widehat{\S})}{\sin\theta}.
\end{eqnarray*}
Hence, this completes the proof of \eqref{inner radius}. Similarly, by applying \cite[Proposition 2.4, Eq.~(2.27)]{SW}, the proof of \eqref{outer radius} follows in an analogous manner.
\end{proof}

\subsection{Capillary Blaschke-Santal\'o inequality}
In this subsection, we define the polar body $\wh\S^*_{z}$ of the capillary convex body $\widehat{\S}$ with respect to the point $z\in \text{int}(\widehat{\partial \S})$. Subsequently, we prove a non-sharp version of capillary Blaschke-Santal\'o inequality, which concerns the upper bound estimate of the Mahler type volume $$\inf\limits_{z\in {\rm{int}}(\widehat{\partial\S})}{\rm{Vol}}(\widehat{\S}) {\rm{Vol}}(\widehat{\S}^{*}_{z}).$$ Such a kind of estimate inequality will be used to show that a capillary type entropy functional, as in Firey \cite{Firey} (see also Guan-Ni \cite{GN}) is well-defined, we postpone this to Section \ref{sec4}.  

First of all, we recall the capillary gauge function $F_\theta\colon \RR^{n+1}\to [0,+\infty)$ defined by
\begin{eqnarray*}
    F_\theta(\xi) \coloneqq |\xi|+\cos\theta \<e,\xi\>,
\end{eqnarray*} see, e.g., \cite[Section 3]{LXZ}. The dual gauge function $F_\theta^\circ\colon \RR^{n+1}\to \RR$ of $F_\theta$ is given by 
\begin{eqnarray*}
    F^\circ_\theta (x)=\frac 1{\sin \theta} \left(\sqrt {|x|^2 +\cot^2 \theta \<e,x\>^2} -\cot \theta \<e,x\>\right).
\end{eqnarray*}
It is easy to see that $\left\{x\in \ol{\RR^{n+1}_+}\mid  F_\theta^\circ(x)=r\right\}$ is exactly the capillary spherical cap with radius $r$ and centered at $\cos\theta re$, namely $\C_{r,\theta}(o)$.
Denote $$\mathcal{D}_{\theta}(r)\coloneqq \left\{\eta=(\eta', \eta_{n+1})\in {\RR^{n}\times \RR} ~\mid  F_{\theta}(\eta)=r \text{ and } \eta_{n+1}\ge r\frac{ \cos \theta}{\sin^2\theta} \right\},$$ which turns out to be a portion of the ellipsoid given by  \begin{eqnarray*}   \frac{|\eta'|^2}{r^2/\sin^2\theta}+\frac{\left(\eta_{n+1}-\frac{r\cos\theta}{\sin^2\theta}\right)^2}{r^2/\sin^4\theta}=1.\end{eqnarray*} For simplicity, we denote $\DD_{\theta}=\DD_{\theta}(1)$.

It is easy to check
for any $\xi\in \C_{\theta}$, we have
\begin{eqnarray}\label{eq:gradF} 
\n F^{\circ}_{\theta}(\xi) =\frac{1}{\sin \theta} \frac{\xi-\cos\theta F^{\circ}_{\theta}(\xi) e}{\sqrt{|\xi|^2 +\cot^2\theta \<e,\xi\>^2}}.
\end{eqnarray}
It is natural to introduce a capillary adaptation of the Cahn-Hoffman map $\Psi\colon \C_\theta\to \RR^{n+1}$ as 
\begin{eqnarray}\label{Cahn-Hoffman}
    \Psi(\xi)\coloneqq\n F_\theta ^\circ(\xi)=\frac{\xi-\cos\theta e}{F_\theta(\xi-\cos\theta e)}=\frac{\xi-\cos\theta e}{\ell},
\end{eqnarray} 
which is the restriction of $\n F_\theta$ on $\C_\theta$.
The map $\Psi$ satisfies the following property.
\begin{prop}
$\Psi$ is a diffeomorphism  from $\C_{\theta}$ to $\mathcal{D}_{\theta}$.
\end{prop}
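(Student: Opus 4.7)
My plan is to produce an explicit smooth inverse to $\Psi$ and verify the defining relations. Since $\ell>0$ throughout $\C_\theta$, the map $\Psi$ is manifestly smooth, so the task reduces to exhibiting a smooth $\Phi\colon \DD_\theta\to\C_\theta$ with $\Psi\circ\Phi=\mathrm{id}$ and $\Phi\circ\Psi=\mathrm{id}$. Guided by the identity $|\xi-\cos\theta e|=1$ on $\C_\theta$, I would take
\begin{eqnarray*}
\Phi(\eta)\coloneqq \cos\theta\, e+\frac{\eta}{|\eta|},\qquad \eta\in\DD_\theta;
\end{eqnarray*}
this is smooth on $\DD_\theta$ because $F_\theta(\eta)=1$ forces $|\eta|=1+\cos\theta\,\eta_{n+1}>0$.

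The verification will proceed in three short steps. First, I would check that $\Psi$ lands in $\DD_\theta$: the identity $F_\theta(\Psi(\xi))=1$ is an immediate manipulation using $|\xi-\cos\theta e|=1$, and writing $\xi=\cos\theta e+\omega$ with $|\omega|=1$ and $\omega_{n+1}\ge\cos\theta$ (the range forced by $\xi\in\ol{\R^{n+1}_+}$), one finds
\begin{eqnarray*}
\Psi(\xi)_{n+1}=\frac{\omega_{n+1}}{1-\cos\theta\,\omega_{n+1}},
\end{eqnarray*}
which is strictly increasing in $\omega_{n+1}\in[\cos\theta,1]$ with minimum value $\cos\theta/\sin^2\theta$, matching the defining half-space cut of $\DD_\theta$. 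Second, I would verify $\Phi(\eta)\in\C_\theta$: the condition $|\Phi(\eta)-\cos\theta e|=1$ is automatic, while the half-space condition $(\Phi(\eta))_{n+1}\ge 0$ translates, via $|\eta|=1+\cos\theta\,\eta_{n+1}$, exactly into $\eta_{n+1}\ge\cos\theta/\sin^2\theta$. Third, the two compositions reduce to the single algebraic identity $\ell(\Phi(\eta))=1/|\eta|$, from which $\Psi\circ\Phi=\mathrm{id}$ and $\Phi\circ\Psi=\mathrm{id}$ follow at once.

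No step presents a serious obstacle; the only delicate point is ensuring that the correspondence between the boundary constraints $\omega_{n+1}\ge\cos\theta$ on $\C_\theta$ and $\eta_{n+1}\ge\cos\theta/\sin^2\theta$ on $\DD_\theta$ is a genuine equivalence, so that no boundary points of $\C_\theta$ are lost under $\Psi$. This equivalence is encoded in the strict monotonicity of $\omega_{n+1}\mapsto\omega_{n+1}/(1-\cos\theta\,\omega_{n+1})$, whose endpoint value is precisely $\cos\theta/\sin^2\theta$. Once these checks are complete, $\Psi$ and $\Phi$ are mutually inverse smooth maps, and hence $\Psi$ is a diffeomorphism from $\C_\theta$ onto $\DD_\theta$.
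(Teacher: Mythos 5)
Your proof is correct, and it takes a genuinely different route from the paper. The paper verifies that $\Psi$ lands in $\DD_\theta$ (checking $F_\theta(\Psi(\xi))=1$ and the image of $\p\C_\theta$) and then computes the ambient Jacobian matrix $D\Psi$ explicitly, finding $\det(D\Psi(\xi))=\ell^{-(n+2)}>0$ and concluding that $\Psi$ is an orientation-preserving diffeomorphism. You instead exhibit the explicit global inverse $\Phi(\eta)=\cos\theta\,e+\eta/|\eta|$ and verify the range conditions on both sides, with the half-space constraint $\omega_{n+1}\ge\cos\theta$ on $\C_\theta$ matching the cut $\eta_{n+1}\ge\cos\theta/\sin^2\theta$ on $\DD_\theta$ via the monotone correspondence $\omega_{n+1}\mapsto\omega_{n+1}/(1-\cos\theta\,\omega_{n+1})$; the compositions then follow from $|\xi-\cos\theta\,e|=1$ and the homogeneity identity $\ell(\Phi(\eta))=F_\theta(\eta)/|\eta|=1/|\eta|$. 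Your route has the advantage of directly delivering global bijectivity, whereas a nonvanishing Jacobian by itself only gives a local diffeomorphism (the paper's injectivity/surjectivity is implicit in its boundary check); on the other hand, the paper's computation also records the Jacobian determinant and orientation, which your argument does not produce. Two small points worth making explicit in a final write-up: the positivity $|\eta|=1+\cos\theta\,\eta_{n+1}>0$ on $\{F_\theta=1\}$ (needed for smoothness of $\Phi$) deserves a one-line justification valid for all $\theta\in(0,\pi)$, e.g.\ $F_\theta(\eta)\ge(1-|\cos\theta|)|\eta|$; and the identity $\Phi\circ\Psi=\mathrm{id}$ really rests on $|\xi-\cos\theta\,e|=1$ rather than on $\ell(\Phi(\eta))=1/|\eta|$, so it is cleaner to verify the two compositions separately.
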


\begin{proof}
From \eqref{Cahn-Hoffman}, we have 
\begin{eqnarray*}
    F_{\theta}(\Psi(\xi))=F_{\theta} \left(\frac{\xi-\cos\theta e}{F_{\theta}(\xi-\cos\theta e)} \right)=1,
\end{eqnarray*}
and for $\xi=(\xi',0)\in \partial {\mathcal C}_\theta$, there holds
$$
\Psi (\xi) =\ell^{-1}(\xi) (\xi -\cos \theta e)=\sin^{-2}\theta(\xi',\cos \theta),
$$
which implies $\Psi$ is a map from $\C_{\theta}$ to $\mathcal{D}_{\theta}$.

 For all $\xi\in \C_{\theta}$,  direct calculations yield
   $$
D\Psi (\xi)=\left (\begin{array}{cccc}
\frac{1}{\sin^{2}\theta+\cos\theta \<\xi, e\>} &\dots&  0 & \frac{\cos\theta\xi_{1}}{(\sin^{2}\theta+\cos\theta\<\xi,e\>)^{2}} \\
 \vdots & \vdots& \ddots& \vdots\\
0&  \cdots&\frac{1}{\sin^{2}\theta+\cos\theta\<\xi,e\>} & \frac{\cos\theta \xi_{n}}{(\sin^{2}\theta +\cos\theta\<\xi,e\>)^{2}} \\
0 & \cdots &0 & \frac{1}{(\sin^{2}\theta+\cos\theta\<\xi,e\>)^{2}} \\
\end{array}\right),
$$
then its Jacobi   $\det(D \Psi(\xi))=\frac{1}{\ell^{n+2}(\xi)}>0, ~\forall \xi\in \C_{\theta},$ therefore $\Psi$ is an orientation-preserving diffeomorphism map from $\C_{\theta}$ to $\mathcal{D}_{\theta}$.
\end{proof}

 The following concept is the capillary adaptation of the radial function. 
\begin{defn}[Capillary radial function] \label{def-pola}
Given $\widehat\Sigma\in\K_\theta^{\circ}$, its capillary radial function $\rho_{\wh\S}\colon  \C_\theta\to [0,+\infty)$ is defined by
	\begin{eqnarray*}
	\rho_{\widehat{\S}}(\xi)\coloneqq  \sup \left\{ \l \geq 0\,|\, \l \xi \in \widehat\S  \right\} ,  \quad  \xi \in \C_{\theta }.
\end{eqnarray*}
\end{defn}
Next, we introduce a notion of the capillary polar body for a capillary convex body. We note that this concept differs from the classical one presented in \cite[Section 1.6]{Sch}, where the usual support function $h$ is used. In our setting, we employ the capillary support function \eqref{capillary support}, which is a necessary modification to accommodate the capillary boundary condition.
\begin{defn}[Capillary polar body]
	 Given $\wh\Sigma\in \K_\theta$ and $z\in {\rm{int}}(\widehat{\partial\S})$, we define its capillary polar  body with respect to $z$ by	
     
     $$ \widehat{\S}^{*}_{z}\coloneqq  \left \{X+z\in  \ol{\R^{n+1}_+}  \mid ru_{z}(\xi)\le 1 , \text{ where } X=r\xi \text{ with } \xi\in \C_\theta\right\}, $$ where $u_z$ is the capillary support function of $\S$ with respect to $z$.
  \end{defn} 
\begin{rem}
It can be readily observed that 
$ru_{z}(\xi)\le 1$ is equivalent to   
\begin{eqnarray*}
r{\<\xi-\cos\theta e,y-z\>} \le \ell(\xi) \quad \forall~ y\in \S.
\end{eqnarray*} 
In other words, \begin{eqnarray*}%\label{reformula-capillary-polar-body}
    r\<\Psi(\xi),y-z\>\leq 1~~~\forall ~ y\in \S,
\end{eqnarray*} where $\Psi$ is given by \eqref{Cahn-Hoffman}. In particular, we observe that the capillary polar body of $\wh\C_\theta$ (w.r.t the origin $o$)  satisfies $(\widehat{\C_{\theta}})^{\ast}_{o}=\widehat{\C_{\theta}}$.

\end{rem}
 For notation consistency, we denote $\Sigma^*_{z}\coloneqq \p ( \widehat{\S}^{*}_{z})\cap  {\RR^{n+1}_+}$. We note that, in general, the capillary polar body of a capillary convex body is not itself a capillary convex body, except in the planar case. Namely, $\S^*_z$ may not be a capillary hypersurface for $n\geq 2$. Nevertheless, we have the following relationship between the capillary support function of a capillary convex body and the radial function of its capillary polar body. This property is of independent interest and may be useful for related problems of capillary hypersurfaces.

\begin{prop}
Given $\wh\S\in \K_\theta^\circ$ and $z\in {\rm{int}}(\widehat{\partial\S})$. Denote $\rho_{z} \coloneqq \rho_{\widehat{\S}_{z}^{\ast}}$ be the capillary radial function of $\widehat{\S}_{z}^{\ast}$. Then there holds 
    \begin{eqnarray*}
    \rho_{z}(\xi)=\frac{1}{u_{z}(\xi)}, ~~~~\forall \xi\in \C_\theta.
    \end{eqnarray*}
\end{prop}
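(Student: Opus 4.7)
The approach is a direct unwinding of the definitions of the capillary polar body and the capillary radial function, reducing the identity to a one-line geometric fact about rays meeting the cap $\C_\theta$. More precisely, I will rewrite the membership condition $z + \lambda \xi \in \wh\S_z^*$ using the definition of the polar: such a membership holds precisely when there exist $\xi' \in \C_\theta$ and $r \ge 0$ with $\lambda \xi = r \xi'$ and $r u_z(\xi') \le 1$. The identity $\rho_z(\xi) = 1/u_z(\xi)$ will then follow once I show that the decomposition $\lambda \xi = r \xi'$ with $\xi' \in \C_\theta$ and $r \ge 0$ is forced to satisfy $\xi' = \xi$ and $r = \lambda$.

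The technical heart is the geometric observation that, for each $\xi \in \C_\theta$, the positive ray $\{t\xi : t \ge 0\}$ hits $\C_\theta$ only at $t = 1$. I plan to verify this by expanding $|t\xi - \cos\theta e|^2 = 1$ into the quadratic
\begin{equation*}
|\xi|^2 t^2 - 2\cos\theta \,\langle\xi,e\rangle\, t - \sin^2\theta = 0.
\end{equation*}
Since $\sin^2\theta > 0$ for $\theta \in (0,\pi)$, the product of the two roots equals $-\sin^2\theta/|\xi|^2 < 0$; and since $t = 1$ is a root (the point $\xi$ itself lies on $\C_\theta$), the other root must be strictly negative. Hence $t = 1$ is the unique positive solution, which is precisely the uniqueness claim above.

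Combining the two steps, the membership $z + \lambda\xi \in \wh\S_z^*$ is equivalent to $\lambda u_z(\xi) \le 1$, so the supremum over admissible $\lambda \ge 0$ gives $\rho_z(\xi) = 1/u_z(\xi)$. I do not foresee any genuine obstacle; the only point worth flagging is that $\rho_z$ is to be read as the capillary radial function of $\wh\S_z^*$ measured from $z$ — equivalently, Definition \ref{def-pola} applied to the translate $\wh\S_z^* - z$ — which is consistent with the hypothesis $z \in \mathrm{int}(\wh{\p\S})$, since positivity and boundedness of $u_z$ on $\C_\theta$ ensure that $z$ lies in the interior of the flat portion of $\wh\S_z^*$.
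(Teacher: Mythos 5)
Your proposal is correct and follows essentially the same route as the paper: both arguments reduce the claim to unwinding the definition of $\wh\S^*_z$, noting that the boundary point $z+\rho_z(\xi)\xi$ saturates the defining inequality $r\,u_z(\xi)\le 1$. The only difference is that you explicitly verify (via the quadratic for $|t\xi-\cos\theta e|^2=1$ having exactly one positive root) that the ray through $\xi$ meets $\C_\theta$ only at $\xi$ itself, a detail the paper's two-line proof takes for granted.
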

\begin{proof}
  For all $\xi\in\C_\theta$, we see that $\rho_{z}(\xi)\xi\in \S^{*}_{z}-z$.  By the Definition \ref{def-pola} of $\wh\S^*_{z}$, we derive that $\rho_{z}(\xi) u_{z}(\xi)=1$, then the conclusion follows.
\end{proof}

Based on the preceding preparations, we can express the volume of the capillary polar body $\widehat{\S}^{\ast}_{z}$ in terms of the usual support function $h_z$ or the capillary support function $u_{z}$ of $\S$, defined on $\C_\theta$.
\begin{prop}\label{prop-volume-polar-body}
The volume of the capillary polar body $\widehat{\S}^{*}_{z}$ satisfies
 \begin{eqnarray}
{\rm{Vol}}(\widehat{\S}^*_{z}) =
 \frac{1}{n+1}\int _{\C_{\theta }} \frac {\ell(\xi)} {u^{n+1}_{z}(\xi)} d\sigma=\frac{1}{n+1}\int_{\C_{\theta}}\frac{\ell^{n+2}(\xi)}{h^{n+1}_{z}(\xi)}d\sigma, \label{vol}
\end{eqnarray}
where $d\s$ is the standard area element on $\C_\theta$.
\end{prop}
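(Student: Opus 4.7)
The plan is to use ``capillary polar coordinates'' adapted to the spherical cap $\C_\theta$. Since volume is translation invariant, I first shift the body by $-z$, reducing to the case $z=o$; throughout I then write $u$ and $\rho$ for $u_z$ and $\rho_z$. The preceding proposition gives $\rho(\xi)u(\xi)=1$, and the definition of $\wh\S^*_z$ now reads $\wh\S^*_z=\{r\xi:\xi\in\C_\theta,\,0\le r\le\rho(\xi)\}$. In other words, $\wh\S^*_z$ is exactly the image of the map
\[\Phi\colon \bigl\{(r,\xi):\xi\in\C_\theta,\,0\le r\le\rho(\xi)\bigr\}\longrightarrow \ol{\RR^{n+1}_+},\qquad \Phi(r,\xi)=r\xi,\]
so the two claimed identities will drop out of the change-of-variables formula applied to $\Phi$.

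For the first ingredient, injectivity of $\Phi$ on $\{r>0\}$, I would use that $|\cos\theta\,e|=|\cos\theta|<1$ for $\theta\in(0,\pi)$, so the origin lies strictly inside the unit sphere centered at $\cos\theta\,e$. Hence each ray from the origin meets that sphere (in particular its upper cap $\C_\theta$) at a unique positive-parameter point. This yields injectivity, and also shows that the union of rays $\{t\xi:t\ge 0,\,\xi\in\C_\theta\}$ fills all of $\ol{\RR^{n+1}_+}$, confirming that $\Phi$ hits the whole polar body.

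For the Jacobian, in local coordinates $(u^1,\dots,u^n)$ on $\C_\theta$ the columns of $D\Phi$ are $\xi,\,r\partial_1\xi,\dots,r\partial_n\xi$, so
\[|\det D\Phi|(r,\xi)=r^n\,\<\xi,\nu_{\C_\theta}(\xi)\>\sqrt{\det g_{ij}},\]
where $\nu_{\C_\theta}(\xi)=\xi-\cos\theta\,e$ is the outer unit normal of $\C_\theta$ and $g_{ij}$ the induced metric. Expanding $|\xi-\cos\theta\,e|^2=1$ gives $|\xi|^2=\sin^2\theta+2\cos\theta\<\xi,e\>$, and hence
\[\<\xi,\nu_{\C_\theta}(\xi)\>=|\xi|^2-\cos\theta\<\xi,e\>=\sin^2\theta+\cos\theta\<\xi,e\>=\ell(\xi),\]
a pleasant capillary analogue of the identity $\<\xi,\xi\>=1$ on the unit sphere.

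Assembling these pieces, the change-of-variables formula yields
\[\mathrm{Vol}(\wh\S^*_z)=\int_{\C_\theta}\!\!\int_0^{\rho(\xi)} r^n\,\ell(\xi)\,dr\,d\sigma(\xi)=\frac{1}{n+1}\int_{\C_\theta}\frac{\ell(\xi)}{u^{n+1}(\xi)}\,d\sigma,\]
which is the first identity; the second follows at once from $u=h/\ell$. The only genuinely non-routine step is verifying that $\Phi$ parametrizes $\wh\S^*_z$ exactly rather than some larger set, which as outlined reduces to the single-intersection property of rays from the origin with the sphere underlying $\C_\theta$; the Jacobian computation and the final integration are then mechanical.
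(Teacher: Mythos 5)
Your proof is correct, and it reaches the formula by a slightly different mechanism than the paper. The paper applies the co-area formula to the dual gauge function $F^{\circ}_{\theta}$, whose level sets are the dilated caps $r\,\C_{\theta}$; the weight $\ell$ then enters through the computation $|\n F^{\circ}_{\theta}(\xi)|=1/\ell(\xi)$ on $\C_\theta$ (Eq. \eqref{norm gredient}), using the explicit formula \eqref{eq:gradF}. You instead foliate the polar body by rays from the origin and compute the Jacobian of $(r,\xi)\mapsto r\xi$ directly, with the weight $\ell$ entering through the identity $\<\xi,\xi-\cos\theta\,e\>=\ell(\xi)$, which is the "support function of $\C_\theta$ at itself." These are two standard derivations of generalized polar coordinates and are morally dual to one another (indeed $\<\xi,\n F^\circ_\theta(\xi)\>=F^\circ_\theta(\xi)=1$ by Euler's relation links the two weights); your version is the more elementary and self-contained, since it bypasses the gauge-function machinery entirely, while the paper's version reuses objects ($F_\theta$, $F^\circ_\theta$, $\Psi$) it has already set up for other purposes. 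Your two non-routine verifications — that every ray from the origin into $\ol{\RR^{n+1}_+}$ meets $\C_\theta$ exactly once because the origin lies strictly inside the ball $|x-\cos\theta\,e|<1$, and the Jacobian identity — are both correct, and the translation reduction to $z=o$ is harmless since $\widehat{\S}^*_z-z$ is exactly the polar body of $\S-z$ with respect to the origin.
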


\begin{proof}
From the co-area formula and the  0-homogeneity property of $\n F^\circ_{\theta}$, we compute
\begin{eqnarray}
{\rm{Vol}}(\widehat{\S}^*_{z}) &=& \int_{-\infty}^\infty \int_{\{F^\circ_{\theta}=r\}}\frac {\chi_{\{r u_{z}(\xi)\le 1\}}} {|\n F^\circ_{\theta}(\xi)|} d\sigma dr
=  \int_{-\infty}^\infty \int_{\{F^\circ_{\theta}=1\}} r^n \frac {\chi_{\{ru_{z}(\xi)\le 1\}}}
 {|\n F^{\circ}_{\theta}(\xi)|} d\sigma dr\notag \\
 &=&\int_{\C_\theta} \frac 1{|\n F^\circ_{\theta}(\xi)|}\int_0^{\frac 1{u_{z}(\xi)}} r^n dr d\sigma\notag \\
 &=& \frac 1{n+1} \int_{\C_\theta}  \frac 1{|\n F^\circ_{\theta}(\xi)|}\frac 1{u^{n+1}_{z}(\xi)} d\sigma .\label{polar volume}
\end{eqnarray}
From \eqref{eq:gradF}, we obtain 
\begin{eqnarray}| \n F^{\circ}_{\theta}(\xi )| = \frac{1}{\sin \theta} \frac{1}{\sqrt{|\xi |^2 +\cot^2\theta \<e,\xi\>^2}}= \frac 1 \ell. 
\label{norm gredient}
\end{eqnarray}
Inserting \eqref{norm gredient} into \eqref{polar volume}, the first equality of \eqref{vol} follows, while the second equality follows directly by combining with \eqref{capillary support}. This completes the proof.
\end{proof}

We now introduce the notion of a capillary Santal\'o point $z_s\in \wh{\p\S}$ for a given capillary convex body $\widehat{\S}$, defined as the point for which the infimum of the volume of its capillary polar body is attained.
\begin{defn}[Capillary Santal\'o point]
Given $\wh\S\in \K_\theta$ with $\theta\in(0, \pi/2)$, if there exists a  point $z_{s}=z_s(\wh\S) \in \widehat{\partial\S}$ such that  
$${\rm{Vol}}(\widehat{\S}^*_{z_s})= \inf_{z\in {\rm{int}}(\widehat{\partial \S})} {\rm{Vol}}(\widehat{\S}_z^*),$$ then we call $z_s$ the capillary Santal\'o point of $\widehat{\S}$. 
\end{defn} 

The following proposition justifies that the capillary Santal\'o point is well-defined and unique. The idea of the proof is inspired by \cite[Lemma 2.2]{GN}. A similar argument will be employed again in Proposition \ref{lem entropy} below.

\begin{prop}\label{pro-santalo}
    The capillary Santal\'o point $z_{s}$ is unique and lies in ${\rm{int}}(\widehat{\partial\S})$. Moreover, there holds
\begin{eqnarray}\label{santalo-point-ortho}
    \int_{\C_{\theta}}\frac { \xi_i}{u_{z_s}^{n+2}(\xi)} d\s=0,\quad~ \forall  ~ i=1,\cdots, n,
    \end{eqnarray}where $\xi_i\coloneqq \<\xi, E_i\>$, with $\{E_i\}_{i=1}^n$ denoting the horizontal coordinate vectors of $\RR^{n+1}_+$.
\end{prop}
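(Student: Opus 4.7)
The plan is to view $f(z) \coloneqq \mathrm{Vol}(\widehat\Sigma^*_z)$ as a smooth, strictly convex, proper function on $\mathrm{int}(\widehat{\partial\Sigma})$, so that the Santal\'o point arises as its unique interior minimizer and \eqref{santalo-point-ortho} as the vanishing of its gradient.

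First I would exploit the affine dependence of $u_z$ on $z$. Since $z\in\partial\mathbb{R}^{n+1}_+$ gives $\langle e,z\rangle = 0$, one computes $h_z(\xi)=h(\xi)-\sum_{i=1}^n z_i\xi_i$ and hence $u_z(\xi)=u(\xi)-\ell^{-1}(\xi)\sum_i z_i\xi_i$, so $\partial_{z_i} u_z=-\xi_i/\ell$. Combining with the volume formula of Proposition~\ref{prop-volume-polar-body} and differentiating under the integral sign yields
$$
\partial_{z_i} f(z)=\int_{\C_\theta}\frac{\xi_i}{u_z^{n+2}(\xi)}\,d\sigma,\qquad \partial^2_{z_iz_j} f(z)=(n+2)\int_{\C_\theta}\frac{\xi_i\xi_j}{\ell(\xi)\,u_z^{n+3}(\xi)}\,d\sigma.
$$
For every $a\in\mathbb{R}^n\setminus\{0\}$, $\sum_{i,j}a_ia_j\,\partial^2_{z_iz_j}f(z)=(n+2)\int_{\C_\theta}\ell^{-1}u_z^{-n-3}\bigl(\sum_i a_i\xi_i\bigr)^2\,d\sigma>0$, because the linear form $\sum_i a_i\xi_i$ cannot vanish identically on any open subset of $\C_\theta$. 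Thus $f$ is strictly convex on $\mathrm{int}(\widehat{\partial\Sigma})$.

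Next I would prove that $f$ is proper, i.e.\ $f(z)\to+\infty$ as $z$ tends to the relative boundary $\partial(\widehat{\partial\Sigma})\subset\partial\Sigma$. Let $z_\ast$ be a limit point and let $\xi_\ast\in\partial\C_\theta$ be its image under the capillary Gauss map; since $z_\ast\in\Sigma$, we have $u_{z_\ast}(\xi_\ast)=0$. The tangency identity $\langle \xi_\ast-\cos\theta e,\,DX(\xi_\ast)(\cdot)\rangle=0$, together with positive-definiteness of $\nabla^2 h+h\sigma$, forces $u_{z_\ast}(\xi)\asymp|\xi-\xi_\ast|^2$ near $\xi_\ast$, while the linear estimate $u_z(\xi_\ast)\gtrsim\mathrm{dist}(z,\partial(\widehat{\partial\Sigma}))$ combined with a standard scaling argument ($r\mapsto\sqrt{\varepsilon}\,s$ with $\varepsilon=\mathrm{dist}(z,\partial(\widehat{\partial\Sigma}))$) shows that $\int_{\C_\theta}\ell\,u_z^{-n-1}\,d\sigma\gtrsim\varepsilon^{-n/2-1}\to\infty$. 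The hypothesis $\theta<\pi/2$ keeps $\ell$ uniformly bounded below on $\C_\theta$, so it cannot compensate the singularity.

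Combining strict convexity with properness delivers a unique interior minimizer $z_s\in\mathrm{int}(\widehat{\partial\Sigma})$, and the first-order condition $\partial_{z_i}f(z_s)=0$ is precisely \eqref{santalo-point-ortho}. The main obstacle is the properness step: one must translate the geometric degeneration $u_z\to 0$ at the boundary direction $\xi_\ast$ into a quantitative divergence estimate for the integral. By contrast, the Hessian computation and the first-order identity are routine differentiations under the integral sign.
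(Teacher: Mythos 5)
Your proposal is correct in substance, and the gradient/Hessian computations and the first-order condition coincide exactly with the paper's (the paper also differentiates ${\rm{Vol}}(\widehat\S^*_z)=\frac1{n+1}\int_{\C_\theta}\ell\,u_z^{-(n+1)}d\s$ in $z$ and uses strict convexity of $V(z)$ for uniqueness). Where you genuinely diverge is in showing that the minimizer lies in ${\rm{int}}(\widehat{\p\S})$: the paper does \emph{not} prove properness. Instead it assumes $z_s\in\p\S\cap\p\RR^{n+1}_+$, takes the outward normal direction $E_1$ of $\wh{\p\S}$ at $z_s$, and uses the reflection $N(\xi)=(-\xi_1,\xi_2,\dots,\xi_{n+1})$ together with $\wh\S\subset\{x_1\le0\}$ (here $\theta<\pi/2$ enters) to show $u_{z_s}(N(\xi))\ge u_{z_s}(\xi)$ for $\xi_1\ge0$, whence the inward directional derivative of the volume is strictly negative — a contradiction with minimality. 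Your properness route is arguably cleaner in that it also settles \emph{existence} of the minimizer (which the paper leaves implicit), at the cost of needing the quadratic degeneration $u_{z_*}(\xi)\asymp|\xi-\xi_*|^2$ near the boundary normal direction, which does hold by smoothness and strict convexity since $\n^2h_{z_*}+h_{z_*}\s=\n^2h+h\s>0$ and $h_{z_*}$ vanishes to second order at $\xi_*$. One small imprecision in your write-up: to make the integral $\int_{\C_\theta}\ell\,u_z^{-(n+1)}d\s$ blow up you need an \emph{upper} bound $u_z(\xi)\lesssim \varepsilon+|\xi-\xi_*|^2$ near $\xi_*$ (which follows from $u_z=u_{z_*}+\ell^{-1}\<\xi-\cos\theta e,z_*-z\>$), not the lower bound $u_z(\xi_*)\gtrsim\varepsilon$ you quote; with that fixed, the scaling gives $\gtrsim\varepsilon^{-n/2-1}$ as you state, and the argument is complete.
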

\begin{proof}
First, we show that the capillary Santal\'o point $z_{s}$ lies in ${\rm{int}}(\widehat{\partial\S})$. We argue by contradiction and assume that $z_{s} \in \partial \S \cap \partial  {\RR^{n+1}_{+}}$. Without loss of generality,  assume $z_{s}$ is the origin. Since $\widehat{\partial \S}$ is a closed convex body in $\partial {\RR^{n+1}_{+}} = \RR^n$, it follows from \cite[Proof of Lemma~2.2, claim (2.3)]{GN} that there exists a unit vector $\zeta \in \RR^{n+1}$ (specifically, $\zeta$ is the unit outward normal of $\partial \S \subset \wh{\p \S}$ at the origin, and for notational convenience, we may assume $\zeta = E_1$,) such that
\begin{eqnarray}\label{L-contained}
L \coloneqq \left\{ -t E_1 \mid  0 < t < t_{0} \right\}\text{ is contained in }\widehat{\partial \S},~\text{ for some small } t_{0}>0.
\end{eqnarray}
Moreover, due to $\theta \in (0,\pi/2)$, the set $\widehat{\S}$ is contained in the half-space $\{x_{1}\leq 0\}$ and is tangent to the hyperplane $\{x_1=0\}$ at the origin. 

For $\xi\coloneqq (\xi_{1}, \xi_{2},\cdots, \xi_{n+1})\in \C_{\theta}$,  define $N(\xi)\coloneqq(-\xi_{1}, \xi_{2}, \cdots, \xi_{n+1})$.
Since $$u_{z_s}(\xi)=\ell^{-1}(\xi)\sup\limits_{X\in \S}\<X, \xi+\cos\theta E_{n+1}\>,$$ for all $\xi\in \C_\theta$, then
 there exists a point $z(\xi)\in \S$, such that 
 $$u_{z_s}(\xi)=\ell^{-1}(\xi)\<z(\xi), \xi+\cos\theta E_{n+1}\>.$$
For all $\xi\in \C_{\theta}$ with $\xi_{1}\geq 0$, by the Definition  \ref{defn-capillary-support}, we have
\begin{eqnarray}
    u_{z_s}\left(N(\xi)\right)&\geq &\ell^{-1}(N(\xi))\<z(\xi), N(\xi)+\cos\theta E_{n+1}\>\notag %\\&=&\ell^{-1}(\xi)\<z(\xi), N(\xi)+\cos\theta E_{n+1}\>\notag 
    \\
    &=& \ell^{-1}(\xi)\<z(\xi), \xi+\cos\theta E_{n+1}\>-\ell^{-1}(\xi)\< z(\xi), 2 E_1 \>\notag
    \\
    &\geq& \ell^{-1}(\xi)\<z(\xi), \xi+\cos\theta E_{n+1}\>=u_{z_{s}}(\xi), \label{big}
\end{eqnarray}
where the last line used the fact that  $\<z(\xi), E_1\> \le 0$. Moreover, the above inequality is strict for a subset of $\xi \in \C_{\theta}$ of positive measure.

Consider the function  $$u_{z_t}(\xi)\coloneqq u_{z_s}(\xi)+t\frac{\xi_1}{\ell (\xi)}, ~~~\xi\in \C_\theta, ~~t\in(0, t_0). $$ 
From Definition \ref{defn-capillary-support}, we know that $u_{z_t}$ is the capillary support function of $\widehat{\S}$ with respect to the point $z_{t}=-tE_1$ for all  $0<t<t_{0}$. From \eqref{L-contained}, we have $u_{z_t}(\xi)>0$ for all $t\in(0, t_0)$.  
Using Proposition \ref{prop-volume-polar-body} and \eqref{big}, we compute
\begin{eqnarray*}
    0&\leq& \frac{d}{dt}\left({\rm{Vol}}(\widehat{\S}^{\ast}_{z_{t}})\right)\Big|_{t=0}=-\int_{\C_{\theta}}\frac{\xi_{1}}{u_{z_{s}}^{n+2}(\xi)}d\s\\
    &=&-\left(\int_{\{\xi_{1}>0\}}\frac{\xi_{1}}{u_{z_{s}}^{n+2}(\xi)}d\sigma+\int_{\{\xi_{1}<0\}}\frac{\xi_{ 1}}{u_{z_{s}}^{n+2}(\xi)}d\sigma\right)\\
    &=&-\left(\int_{\{\xi_{1}>0\}}\left(\frac{ \xi_{1}}{u_{z_s}^{n+2}(\xi)}-\frac{ \xi_{1} }{u_{z_s}^{n+2}(N(\xi))}\right)d\s\right)<0,
\end{eqnarray*}
this reaches a contradiction. Hence we get $z_{s}\in   \rm{int}(\widehat{\partial \S})$. % and $u_{z_s}(\xi)>0$.

Next, we move on to prove \eqref{santalo-point-ortho}. 
Let $z_s^{\varepsilon,i} \coloneqq z_{s}+\varepsilon E_{i}$ with $1\leq i\leq n$. For some sufficiently small $\varepsilon_0 > 0$, we have $$z_s^{\varepsilon,i} \in {\rm{int}}(\widehat{\partial\S}) ~~\text{ for all } |\varepsilon| \leq \varepsilon_0,$$ together with Proposition~\ref{prop-volume-polar-body}, we obtain
\begin{eqnarray*}
   0= \frac{d}{d\varepsilon}\left({\rm{Vol}}(\widehat{\S}^{\ast}_{z_s^{\varepsilon,i}})\right)\Big|_{\varepsilon=0} =\int_{\C_{\theta}}\frac{\xi_{i}}{u_{z_{s}}^{n+2}(\xi)}d\sigma, ~~\text{ for all }~~1\leq i\leq n.
\end{eqnarray*} Hence the assertion \eqref{santalo-point-ortho} follows.  

Finally, we prove the uniqueness of the capillary Santal\'o point.  Without loss of generality,  we assume the capillary Santal\'o point $z_s$ is the origin and it lies in ${\rm{int}}(\widehat{\partial\Sigma})$ by the preceding argument. For all $z\in \widehat{\partial\S}$, we rewrite $z\coloneqq (z_1,\cdots, z_n,0)$, and  %  $z=t\coloneqq -(t_{1}, t_{2}, \cdots, t_{n}, 0)$, then 
\begin{eqnarray}\label{uz}
        u_{z}(\xi)=u(\xi)-  \sum\limits_{i=1}^{n} \frac{z_{i}\xi_{i}}{\ell(\xi)} , \quad \xi\in \C_{\theta},
\end{eqnarray}where $u$ is the capillary support function of $\S$ with respect to the origin. Inserting \eqref{uz} into \eqref{vol} implies
\begin{eqnarray*}
 {\rm{Vol}}(\widehat{\S}_{z}^{\ast})=\frac{1}{n+1}\int_{\C_{\theta}}\frac{\ell(\xi)}{\left(u(\xi)-\ell^{-1}(\xi)\sum\limits_{i=1}^{n}z_{i}\xi_{i}\right)^{n+1}}d\sigma\eqqcolon V(z).
\end{eqnarray*}
A direct calculation shows that 
\begin{eqnarray*}
    \frac{\partial^{2}V(z)}{\partial z_{i}\partial z_{j}}=(n+2)\int_{\C_{\theta}}\frac{\xi_{i}\xi_{j}}{\ell(\xi)u_{z}^{n+3}(\xi)}d\sigma, ~~~\text{ for all } 1\leq i, j \leq n.
\end{eqnarray*} This implies the strict convexity of the function $V(z)$ with respect to $z\in\wh{\p\S}$, which in turn yields the uniqueness of the capillary Santal\'o point. This completes the proof.

\end{proof}

We are now ready to establish the following capillary version of the Blaschke–Santal\'o inequality. Compared to the classical form, this version is non-sharp. Nevertheless, it suffices for our purposes in analyzing the asymptotic behavior of the normalized capillary Gauss curvature flow in Section \ref{sec4}.

\begin{prop}\label{prop-santalo-ineq}
Given $\widehat{\S}\in \K_{\theta}$ and  $\theta\in(0,\pi/2)$, then there holds
   \begin{eqnarray}\label{BS-ineu}
\inf\limits_{z\in {\rm{int}}(\widehat{\partial\S})}{\rm{Vol}}(\widehat{\S}) {\rm{Vol}}(\widehat{\S}^*_{z}) \le  \frac{1}{2}{\rm{Vol}}(\mathbb{B}^{n+1})^{2}.
\end{eqnarray}
\end{prop}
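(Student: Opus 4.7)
The plan is to reduce to the classical Blaschke-Santal\'o inequality via a doubling/reflection argument; the hypothesis $\theta\in(0,\pi/2)$ is used essentially to make the doubled body convex. Let $R\colon\RR^{n+1}\to\RR^{n+1}$ denote the reflection across $\p\RR^{n+1}_+$, and set $K\coloneqq\wh\S\cup R(\wh\S)$. The first step is to verify that $K$ is a convex body in $\RR^{n+1}$, symmetric under $R$, with $\text{Vol}(K)=2\text{Vol}(\wh\S)$ and $K\cap\p\RR^{n+1}_+=\wh{\p\S}$. Convexity away from the contact set is automatic, and at a contact point $p\in\p\S$ the outward normal $\nu=(\nu',\cos\theta)$ to $\S$ has $\cos\theta>0$, so a short computation identifies the union of the tangent cones of $\wh\S$ and $R(\wh\S)$ at $p$ with the convex cone $\{v\mid \<v',\nu'\>+\cos\theta|v_{n+1}|\le 0\}$. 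Combined with the elementary fact that for $\theta<\pi/2$ the orthogonal projection of $\wh\S$ onto $\p\RR^{n+1}_+$ coincides with $\wh{\p\S}$ (so any segment from $\wh\S$ to $R(\wh\S)$ crosses $\p\RR^{n+1}_+$ inside $\wh{\p\S}\subset\wh\S\cap R(\wh\S)$), this gives convexity of $K$.

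Let $z^*$ be the classical Santal\'o point of $K$. By $R$-symmetry and uniqueness, $z^*\in\p\RR^{n+1}_+$, and since $K\cap\p\RR^{n+1}_+=\wh{\p\S}$, in fact $z^*\in\text{int}(\wh{\p\S})$. The main technical step is then to establish the containment
\[
\wh\S^*_{z^*}\subseteq K^{z^*},\qquad K^{z^*}\coloneqq\{y\in\RR^{n+1}\mid\<x-z^*,y-z^*\>\le 1~\forall x\in K\}.
\]
For this, I would take $y-z^*=r\xi$ with $\xi\in\C_\theta$ and $ru_{z^*}(\xi)\le 1$, and combine subadditivity of the classical support function $H_{\wh\S,z^*}(w)\coloneqq \sup_{x\in \wh\S}\<x-z^*,w\>$ with the key identity
\[
H_{\wh\S,z^*}(e)=\sup_{x\in\wh\S}(-x_{n+1})=0,
\]
valid because $\wh\S\subset\ol{\RR^{n+1}_+}$ and $z^*\in\p\RR^{n+1}_+$, to conclude $rH_{\wh\S,z^*}(\xi)\le rH_{\wh\S,z^*}(\xi-\cos\theta e)=ru_{z^*}(\xi)\ell(\xi)\le \ell(\xi)\le \sin^2\theta<1$; the analogous computation applies to $R(\xi)=\xi+2\xi_{n+1}e$, and together they show $\<x-z^*,y-z^*\>\le 1$ for every $x\in K$.

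Finally, combining the containment with the classical Blaschke-Santal\'o inequality applied to $K$ gives
\[
\text{Vol}(\wh\S)\text{Vol}(\wh\S^*_{z^*})\le\text{Vol}(\wh\S)\text{Vol}(K^{z^*})=\tfrac{1}{2}\text{Vol}(K)\text{Vol}(K^{z^*})\le\tfrac{1}{2}\text{Vol}(\BB^{n+1})^2,
\]
and passing to the infimum over $z\in\text{int}(\wh{\p\S})$ (legitimate since $z^*$ is admissible) yields \eqref{BS-ineu}. The hard part will be verifying the convexity of $K$, which is precisely where $\theta<\pi/2$ enters essentially; for $\theta\geq\pi/2$ the reflected body would acquire non-convex corners along $\p\S$ and this doubling approach would break down.
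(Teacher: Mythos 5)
Your argument is correct and follows essentially the same route as the paper: reflect $\wh\S$ across $\p\RR^{n+1}_+$ to obtain a symmetric convex body, apply the classical Blaschke--Santal\'o inequality at its Santal\'o point (which lies in ${\rm int}(\wh{\p\S})$ by symmetry), and use the containment of the capillary polar body in the classical polar body. The only difference is that you spell out two steps the paper treats as immediate — the convexity of the doubled body for $\theta<\pi/2$ and the containment $\wh\S^*_{z^*}\subseteq K^{z^*}$ via subadditivity of the support function and $H_{\wh\S,z^*}(e)=0$ — and both of these verifications are sound.
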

\begin{proof} 
Let $R\colon \ov{\R^{n+1}_+} \to \R^{n+1}$ be the reflection map,  defined by $$ R(x)\coloneqq (x_{1},\cdots, x_{n}, -x_{n+1}), \quad x\in\ov{\RR^{n+1}_{+}}.$$
 Let $V=\S\cup R(\S)$ and $\Omega$ be the bounded domain enclosed by $V$. Then $\O$ is symmetric with respect to the hyperplane $\p\RR^{n+1}_+$. Denote by $\O^{\ast}_z$ the polar body of the convex body $\Omega$ with respect to the point $z \in {\rm{int}}(\O)$, defined by
\begin{eqnarray*}
\O^{\ast}_{z}\coloneqq \left\{X+z\in \RR^{n+1} \mid r \wh h_{z}(\eta)\leq 1, ~ {\rm{where}} ~ X=r\eta ~ {\rm{with}} ~ \eta\in \SS^{n} \right\},
\end{eqnarray*} where $\wh h_z\colon \SS^n\to \RR$ is the usual support function of $\O$ with respect to $z$. From \cite[Section 10.5 and Eq. (10.28)]{Sch} (see also \cite{San}),  
we know there exists a unique point (Santal\'o point) $z_{0}\in \rm{int}(\Omega)$ such that 
\begin{eqnarray*}
    {\rm{Vol}}\left(\Omega_{z_{0}}^{\ast}\right)=\inf \limits_{z\in \rm{int}(\Omega)}{\rm{Vol}}\left(\Omega^{\ast}_{z}\right),
\end{eqnarray*} 
and the Blaschke-Santal\'o inequality  
\begin{eqnarray}\label{class-BS}
    {\rm{Vol}}(\Omega){\rm{Vol}}
    \left(\Omega_{z_{0}}^{\ast}\right)\leq {\rm{Vol}}(\mathbb{B}^{n+1}) ^{2}.
\end{eqnarray}
 From \cite[Eq. (10.22)]{Sch} (or \cite[Lemma 2]{MP}), there holds 
    \begin{eqnarray}\label{sym}
        0=\int_{\SS^{n}}\frac{\eta_{i}}{\widehat h^{n+1}_{z_0}(\eta)}d\eta,\quad 1\leq i\leq n+1.
    \end{eqnarray}
In particular, the symmetry of $\Omega$ with respect to $\p \RR^{n+1}_+$ implies 
    \begin{eqnarray}\label{zs}
        z_{0}\in {\rm{int}}(\O) \cap \p {\RR^{n+1}_{+}}={\rm{int}}({\widehat{\partial \S}}).
    \end{eqnarray}
Since $\ell<1$, from the definition of $\widehat{\S}^{\ast}_{z_{0}}$ and $\O_{z_{0}}^{\ast}$, we deduce that $\widehat{\S}^{\ast}_{z_{0}}\subseteq \O^{\ast}_{z_{0}}$, and hence
\begin{eqnarray*}%\label{polar-volume}
    {\rm{Vol}}(\widehat{\S}_{z_{0}}^{\ast})\leq {\rm{Vol}}(\O_{z_{0}}^{\ast}).
\end{eqnarray*}
Together with \eqref{class-BS} and \eqref{zs}, 
  we derive  
\begin{eqnarray*}
    \inf\limits_{z\in {\rm{int}}(\widehat{\partial\S})}{\rm{Vol}}(\widehat{\S}){\rm{Vol}}(\widehat{\S}^{\ast}_{z})&\leq&  {\rm{Vol}}(\widehat{\S}){\rm{Vol}}(\widehat{\S}^{\ast}_{z_{0}})\\
    &\leq&  \frac{1}{2}{\rm{Vol}}(\Omega){\rm{Vol}}
    \left(\Omega_{z_{0}}^{\ast}\right) \\& \leq & \frac{1}{2} {\rm{Vol}}(\mathbb{B}^{n+1})^{2}.
\end{eqnarray*} 
This completes the proof of \eqref{BS-ineu}.
\end{proof}

We conclude this subsection by proposing a conjecture about the optimal capillary Blaschke-Santal\'o inequality. From \eqref{BS-ineu},  it is clear that
\begin{eqnarray*} 
\inf\limits_{z\in {\rm{int}}(\widehat{\partial\S})}{\rm{Vol}}(\widehat{\S}) {\rm{Vol}}(\widehat{\S}^{*}_{z}) \le C(n, \theta) {\rm{Vol}}(\widehat{\C_{\theta }})^2,
\end{eqnarray*}
where $C(n, \theta)>0$ only
depends on $\theta$ and $n$.
Hence, we propose the following conjecture about the optimal capillary Blaschke-Santal\'o inequality.

\begin{conjecture}\label{conj-sharp-capillary-B-S-ineq}  {\it Given $\widehat{\S}\in \K_{\theta}$ and $\theta\in (0,\pi)$, then there holds
\begin{eqnarray*}
\inf\limits_{z\in {\rm{int}}(\widehat{\partial\S})}{\rm{Vol}}(\widehat{\S}) {\rm{Vol}}(\widehat{\S}^{*}_{z}) \le    {\rm{Vol}}(\widehat{\C_{\theta }})  ^2. %\coloneqq \textbf{b}_\theta ^2.
\end{eqnarray*}
Equality holds if and only if $\S$ is a spherical cap. } 
\end{conjecture}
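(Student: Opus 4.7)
The plan is to adapt the classical Blaschke-Santal\'o strategy to the capillary setting, reducing first to the Santal\'o point, then to a rotationally symmetric configuration via a capillary symmetrization, and finally to a sharp one-dimensional inequality on $[\cos\theta-1,\cos\theta]$. The presence of the preferred direction $e$ and the weight $\ell$ force new ideas at each step.

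\emph{Reduction to the Santal\'o point.} By Proposition \ref{pro-santalo}, the infimum $\inf_{z}\mathrm{Vol}(\widehat{\S}^{*}_{z})$ is attained at a unique capillary Santal\'o point $z_{s}\in\mathrm{int}(\widehat{\p\S})$ satisfying $\int_{\C_{\theta}}\xi_{i}\,u_{z_{s}}^{-(n+2)}\,d\sigma=0$ for $i=1,\ldots,n$. After a horizontal translation one may assume $z_{s}=o$, so it suffices to prove
\begin{equation*}
\mathrm{Vol}(\widehat{\S})\,\mathrm{Vol}(\widehat{\S}^{*}_{o})\le \mathrm{Vol}(\widehat{\C_{\theta}})^{2},
\end{equation*}
with equality only when $u\equiv1$, i.e.\ $\S=\C_{\theta}$. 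A direct check using Proposition \ref{prop-volume-polar-body} shows that on the one-parameter family of rescaled spherical caps $\C_{r,\theta}(o)$ the product is identically $\mathrm{Vol}(\widehat{\C_{\theta}})^{2}$, so the bound is scale-invariant and the extremal set is (at least) this one-parameter family.

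\emph{Symmetrization and the one-dimensional inequality.} I would then apply a capillary rotational (Schwarz) symmetrization about the vertical $E_{n+1}$-axis, replacing each horizontal slice $\widehat{\S}\cap\{x_{n+1}=t\}$ by the $n$-ball of equal measure centered on the axis. The resulting body is convex and volume-preserving, and because every rotation used fixes $e$, the weight $\ell$ is unaffected. Complementarily, Steiner symmetrization about a vertical hyperplane $\{x_{i}=0\}$ ($i\le n$) fixes $e$ pointwise and enforces symmetry within $\p\RR^{n+1}_{+}$. Granting that a suitable combination of these operations preserves the capillary class and does not decrease the capillary polar volume, one reduces to the rotationally symmetric case $h(\xi)=g(\langle\xi,e\rangle)$, in which both $\mathrm{Vol}(\widehat{\S})$ and $\mathrm{Vol}(\widehat{\S}^{*}_{o})$ become explicit one-dimensional weighted integrals of $g$ on $[\cos\theta-1,\cos\theta]$ subject to the endpoint translation of $\nabla_{\mu}h=\cot\theta\,h$. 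The conjecture then becomes a sharp $1$-D inequality $I_{1}(g)I_{2}(g)\le I_{1}(1)I_{2}(1)$, accessible by a first-variation computation identifying $g\equiv1$ as the unique critical point via a strict convexity argument analogous to the uniqueness of the Santal\'o point in Proposition \ref{pro-santalo}, together with a direct rearrangement or second-variation argument.

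\emph{Main obstacle.} The principal difficulty is twofold. First, a chosen symmetrization must map $\K_{\theta}^{\circ}$ into itself with the \emph{same} contact angle $\theta$: a short computation shows that Schwarz symmetrization generically increases $\theta$, because the boundary slope of the axisymmetrized meridian is controlled by an isoperimetric quotient on $\widehat{\p\S}$. One must therefore either find a symmetrization that preserves $\theta$ exactly, or track how $\theta$ varies and correct at the end. Second, one must prove monotonicity of the \emph{capillary} polar volume $\int_{\C_{\theta}}\ell\,u^{-(n+1)}\,d\sigma$ under the chosen procedure; the weight $\ell$ couples different heights and obstructs a direct fiberwise duality argument of the Meyer-Pajor type \cite[Lemma 2]{MP}. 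A promising way around both issues is to push the problem forward under the Cahn-Hoffman map $\Psi$ of \eqref{Cahn-Hoffman} to the anisotropic half-ellipsoid $\DD_{\theta}$ and establish a functional Blaschke-Santal\'o inequality on the weighted sphere $(\C_{\theta},\ell\,d\sigma)$ with Robin boundary data, in the spirit of Artstein-Klartag-Milman; the rigidity forcing $\S=\C_{\theta}$ at equality would then follow from the strict convexity built into the functional form.
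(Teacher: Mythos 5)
This statement is posed in the paper as an open conjecture; the authors prove only the non-sharp version, Proposition \ref{prop-santalo-ineq}, by reflecting $\widehat{\S}$ across $\p\RR^{n+1}_+$ and invoking the classical Blaschke--Santal\'o inequality for the doubled body, which yields the constant $\tfrac12 {\rm{Vol}}(\mathbb{B}^{n+1})^2$ rather than ${\rm{Vol}}(\widehat{\C_\theta})^2$. So there is no proof in the paper to compare against, and the question is whether your proposal actually closes the conjecture. It does not: it is a programme whose two central steps are explicitly conceded rather than proved. The entire content of the argument sits in the phrase ``granting that a suitable combination of these operations preserves the capillary class and does not decrease the capillary polar volume,'' and you yourself then explain in the ``main obstacle'' paragraph why both halves of that grant fail as stated --- Schwarz symmetrization generically changes the contact angle, so it does not map $\K_\theta^\circ$ to itself, and the weight $\ell$ in ${\rm{Vol}}(\widehat{\S}^*_z)=\frac{1}{n+1}\int_{\C_\theta}\ell\,u_z^{-(n+1)}d\sigma$ couples the fibers and blocks the Meyer--Pajor fiberwise duality that drives the classical symmetrization proof. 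Without a concrete symmetrization (or transport/functional-inequality substitute) for which both properties are verified, nothing has been reduced to the one-dimensional problem, and even that terminal $1$-D inequality $I_1(g)I_2(g)\le I_1(1)I_2(1)$ is only asserted to be ``accessible,'' not established.

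Two further points. First, the conjecture is stated for all $\theta\in(0,\pi)$, but your reduction to the Santal\'o point relies on Proposition \ref{pro-santalo}, which the paper proves only for $\theta\in(0,\pi/2)$ (the proof there uses that $\widehat{\S}$ lies on one side of a vertical supporting hyperplane, which uses $\theta<\pi/2$); so the obtuse range is not addressed at all. Second, your scale-invariance check on the caps $\C_{r,\theta}(o)$ is correct (indeed $u\equiv r$ there and $\omega_\theta=(n+1){\rm{Vol}}(\widehat{\C_\theta})$ gives the product ${\rm{Vol}}(\widehat{\C_\theta})^2$), and the suggestion to transplant the problem via the Cahn--Hoffman map $\Psi$ to $\DD_\theta$ and seek a weighted functional Blaschke--Santal\'o inequality with Robin boundary data is a reasonable research direction; but as written the proposal is a plan with the decisive lemmas missing, not a proof.
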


\section{Capillary Gauss curvature flow}\label{sec3}
In this section, we adopt the strategy presented in \cite{Tso85} (see also \cite[Section~15.3]{ACGL}) and show the first part of  Theorem \ref{main-thm}.
Precisely, we establish the following theorem.
\begin{thm}\label{main-existence} 
 Let $\S_0$ be a strictly convex capillary hypersurface in $\ol{\R^{n+1}_+}$ and $\theta\in(0,\pi/2)$, then flow \eqref{c-Gauss curvature flow} has a smooth solution $\S_t\coloneqq X(\cdot,t)$, which is strictly convex capillary hypersurface for all $t\in[0,T^*)$ where $T^*\coloneqq   \frac{ {\rm{Vol}}(\widehat{\S}_0)}{ (n+1){\rm{Vol}}(\widehat{\C_{\theta}})}  .$ Moreover,  the capillary hypersurfaces $\S_t$ shrink to a point $p\in\p{\RR^{n+1}_{+}}$ as    $t\to T^*$.
\end{thm}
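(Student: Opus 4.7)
The plan is to adapt Tso's strategy to the Robin/capillary setting. First, reparametrize the evolving hypersurfaces by the inverse capillary Gauss map \eqref{capillary-Gauss-map-para}, so that \eqref{c-Gauss curvature flow} is recast as a fully nonlinear parabolic Monge--Amp\`ere equation for the support function $h(\xi,t)$ on $\C_\theta$ with Robin boundary condition $\n_\mu h = \cot\theta\, h$ on $\p\C_\theta$. Standard theory for oblique-boundary parabolic problems yields short-time existence with preservation of strict convexity on a maximal interval $[0, T_{\max})$. A direct computation shows
\[
\tfrac{d}{dt}\mathrm{Vol}(\wh\S_t) \;=\; -\int_{\S_t} K\,\<\tilde\nu,\nu\>\, d\mu \;=\; -\int_{\C_\theta}\ell(\xi)\,d\sigma \;=\; -(n+1)\mathrm{Vol}(\wh{\C_\theta}),
\]
so the volume decreases linearly and must vanish at $t=T^*$, forcing $T_{\max}\le T^*$. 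The task is therefore to rule out early extinction by establishing uniform a priori estimates.

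On any subinterval $[0,T_0]$ with $T_0<T_{\max}$, the remaining volume is bounded below, and comparison with appropriate inscribed and circumscribed shrinking spherical caps gives uniform bounds on the capillary inner and outer radii; by Remark~\ref{rem-cap-inner-radius} this yields two-sided positive bounds on the capillary support function $u$ relative to some interior basepoint of $\wh{\p\S_t}$. To bound the Gauss curvature from above, apply the maximum principle to the Tso-type auxiliary function
\[
\varphi \;=\; \frac{K}{u-c_0},
\]
with $c_0$ chosen so that $u-c_0$ stays uniformly positive. The key structural advantage is that $u$ obeys the homogeneous Neumann condition $\n_\mu u = 0$ on $\p\C_\theta$ (cf.~Eq.~\eqref{neumann-tso-fun}), so the Hopf-type boundary terms have favorable sign and the maximum of $\varphi$ cannot occur on $\p\C_\theta$. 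A parallel maximum-principle argument applied to $\log K$ (coupled with an auxiliary barrier term in $u$) produces the lower bound on $K$. Notably, these two-sided bounds on $K$ work for every $\theta\in(0,\pi)$.

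The next step is to upgrade the two-sided Gauss-curvature bound to a two-sided bound on all principal curvatures. Here the restriction $\theta<\pi/2$ becomes essential: a Pogorelov-type test function for the largest principal radius produces boundary contributions on $\p\C_\theta$ whose sign is controlled precisely when $\cos\theta>0$, ensuring that Eq.~\eqref{boundary of Phi} holds on the boundary. Once all principal curvatures are pinched between positive constants, the Monge--Amp\`ere PDE is uniformly parabolic and concave, so Krylov--Safonov estimates together with Schauder theory for Robin/oblique problems give $C^{2+\alpha,1+\alpha/2}$ bounds, and bootstrapping yields $C^\infty$ bounds on $[0,T_0]$ independent of $T_0<T_{\max}$. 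Standard continuation then contradicts the maximality of $T_{\max}$ unless $T_{\max}=T^*$.

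Finally, for the shrinking-to-a-point conclusion: the linear volume decay implies $\mathrm{Vol}(\wh\S_t)\to 0$ as $t\to T^*$, while the uniform positive lower bound on the principal curvatures (equivalently, a uniform upper bound on principal radii) makes the family $\{\wh\S_t\}$ uniformly convex and prevents it from degenerating anisotropically; a uniformly convex body with vanishing volume must collapse to a single point, and the capillary boundary condition places that point on $\p\R^{n+1}_+$. The main obstacle I anticipate is the uniform-in-time control of the principal curvatures up to the corner $\p\S_t\subset\p\R^{n+1}_+$: handling the Robin boundary condition in the Pogorelov-type argument is delicate, and it is exactly at this step that the hypothesis $\theta<\pi/2$ and the capillary-adapted choice of test function are indispensable.
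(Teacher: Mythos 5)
Your proposal follows essentially the same route as the paper: the scalar Monge--Amp\`ere reduction with Robin boundary condition, linear volume decay pinning down $T^*$, the Tso-type function $K/(u-c_0)$ exploiting the homogeneous Neumann condition for $u$, a boundary-sign computation for the convexity test function that requires $\theta<\pi/2$, and continuation plus the uniform lower bound on the principal curvatures to force collapse to a point on $\p\RR^{n+1}_+$. The only cosmetic differences are that the paper obtains the lower bound on $K$ by applying the maximum principle directly to $K$ (no $\log K$ or auxiliary barrier is needed, since $\L K\ge 0$ modulo gradient terms and $\n_\mu K=0$), and its second-order estimate uses the trace $\Phi=\Delta h+nh$ of the principal radii rather than the largest one, which is exactly the quantity for which the boundary inequality \eqref{boundary of Phi} is verified.
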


We divide the proof of Theorem~\ref{main-existence} into two steps. First, we show that as $t \rightarrow T^{\ast}$ (with $T^{\ast} < +\infty$), the capillary inner radius of $\widehat{\S}_t$ tends to zero. This, in turn, implies that the volume of $\widehat{\S}_t$ vanishes in finite time.
Second, using a contradiction argument, we conclude that as $t \rightarrow T^{\ast}$, the capillary convex body $\widehat{\S}_t$ does not collapse to a lower-dimensional convex set but instead contracts to a point.

\subsection{The scalar flow and finite time existence} \

Using the capillary Gauss map parametrization in  \eqref{capillary-Gauss-map-para}, we transform flow \eqref{c-Gauss curvature flow} into the problem of solving a parabolic Monge-Ampère equation subject to a Robin boundary condition.
\begin{prop}\label{prop-scalar equ}
	Let $\Sigma_t$ be a strictly convex solution to flow \eqref{c-Gauss curvature flow} for $t\in [0, T^*)$ and $\theta\in(0, \pi)$,   then its support function $h(\cdot,t)\colon \C_\theta\to \RR$  satisfies 
\begin{eqnarray}\label{support fun eq. of capillary alpha-GCF} \left\{
\begin{array}{llll}
  \p_t h ~ [\det(\n^2 h+h\sigma)]&=&-(\sin^2\theta+\cos\theta \<\xi,e\>)   & \text{ in } \C_\theta\times[0,T^*),\\ 
	\n_\mu h&=& \cot\theta h & \text{ on } \p \C_\theta \times [0, T^*), \\
 h(\cdot,0) &=& h_0(\cdot)  & \text{ on } \C_\theta, \end{array}  \right.
\end{eqnarray} where  $\mu$ is the unit outward normal of $\p \C_\theta\subset  \C_\theta$ and $h_0$ is support function of $\S_0$.
\end{prop}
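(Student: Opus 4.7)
The plan is to convert the vector flow \eqref{c-Gauss curvature flow} into a scalar equation for the support function by using the inverse capillary Gauss map parametrization \eqref{capillary-Gauss-map-para}. Fix $t\in[0,T^\ast)$ and write each point of $\Sigma_t$ as
\[
X(\xi,t)=\nabla h(\xi,t)+h(\xi,t)(\xi-\cos\theta e),\qquad \xi\in\mathcal C_\theta,
\]
so that the outer unit normal at $X(\xi,t)$ is $\nu=\xi-\cos\theta e$ and $\tilde\nu=\nu+\cos\theta e=\xi$. From $\ell=1+\cos\theta\langle\nu,e\rangle$ and $\nu=\xi-\cos\theta e$, I will record the restriction identity
\[
\ell(\xi)=\sin^2\theta+\cos\theta\langle\xi,e\rangle,\qquad \xi\in\mathcal C_\theta,
\]
which is exactly the right-hand side appearing in \eqref{support fun eq. of capillary alpha-GCF}.

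Next, I differentiate $h(\xi,t)=\langle X(\xi,t),\xi-\cos\theta e\rangle$ in $t$ with $\xi$ held fixed. The parametrization $\xi\mapsto X(\xi,t)$ depends on $t$, but the difference between $\partial_t X\big|_\xi$ and the spatial velocity $\partial_t X\big|_x$ of the flow is a vector tangent to $\Sigma_t$; since $\nu=\xi-\cos\theta e$ annihilates all such tangent vectors, this discrepancy disappears when we take the inner product with $\xi-\cos\theta e$. Combining this standard observation with $\partial_t X=-K\tilde\nu$ yields
\[
\partial_t h=\langle\partial_t X,\nu\rangle=-K\,\langle\tilde\nu,\nu\rangle=-K\,\ell(\xi).
\]
Recalling that for a strictly convex hypersurface parametrized by its Gauss map the Gauss curvature is $K=1/\det(\nabla^2 h+h\sigma)$ (the principal radii being the eigenvalues of $\nabla^2 h+h\sigma$ with respect to the round metric $\sigma$ of $\mathcal C_\theta$), multiplying both sides by $\det(\nabla^2 h+h\sigma)$ produces the parabolic Monge--Amp\`ere equation in the first line of \eqref{support fun eq. of capillary alpha-GCF}.

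For the boundary condition, I will invoke the characterization recalled in Section \ref{sec2.1} (from \cite[Proposition 2.6]{MWWX}): for every time slice, $\Sigma_t$ is a capillary hypersurface making contact angle $\theta$ with $\partial\mathbb R^{n+1}_+$ if and only if its support function satisfies the Robin condition $\nabla_\mu h=\cot\theta\, h$ on $\partial\mathcal C_\theta$. The capillary boundary condition \eqref{cap bdry condition} is preserved along the flow by construction, so the support function of $\Sigma_t$ satisfies this Robin condition for all $t\in[0,T^\ast)$. The initial condition $h(\cdot,0)=h_0$ is immediate from the definition of the support function of $\Sigma_0$.

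The only genuinely delicate point in the argument is the computation of $\partial_t h$: one must justify that the implicit time-dependence of the parametrization $\xi\mapsto X(\xi,t)$ contributes only a tangential term to $\partial_t X\big|_\xi$ and therefore drops out after pairing with $\nu$. This is the standard fact underlying all support-function reductions of curvature flows, and in the capillary setting it is clean because $\nu=\xi-\cos\theta e$ is already expressed intrinsically on $\mathcal C_\theta$. Beyond this, everything in the proposition is a direct bookkeeping exercise reading off $K$, $\ell$ and the Robin condition from Section \ref{sec2}.
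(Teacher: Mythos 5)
Your argument is correct and follows essentially the same route as the paper: both compute $\partial_t h=\langle\partial_t X,\nu\rangle=-K\langle\tilde\nu,\nu\rangle$, identify $\langle\tilde\nu,\nu\rangle=1+\cos\theta\langle\nu,e\rangle=\sin^2\theta+\cos\theta\langle\xi,e\rangle$ on $\C_\theta$, and substitute $K=1/\det(\n^2h+h\sigma)$, with the Robin boundary condition obtained from the capillary angle condition exactly as in the cited references. The only cosmetic difference is that the paper defers the boundary condition to the proof of \cite[Proposition 2.4]{MWW} (and later rederives it via \eqref{co-normal bundle} in Proposition \ref{evo-h}), whereas you invoke the equivalent characterization from \cite[Proposition 2.6]{MWWX}; both are adequate.
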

\begin{proof}
	To derive the first equation in \eqref{support fun eq. of capillary alpha-GCF}, we adopt the argument as in \cite[Page 869-870]{Tso85}. In fact, consider a family of strictly convex capillary hypersurfaces $\Sigma_{t}$ that satisfies  \eqref{c-Gauss curvature flow}, its support function $h(\xi,t)$ satisfies
	\begin{eqnarray}\label{h evolution GCF}
		\partial_{t}h=\left\< \frac{\partial X}{\partial t},\nu \right\>=-K \< \tilde{\nu},\nu\>,
	\end{eqnarray}
 together with the simple fact that $\nu(X)= \xi-\cos\theta e$ and $\xi\in\C_\theta$, it follows
 $$\<\tilde{\nu}  ,\nu\>=1+\cos\theta \<\nu,e\>
 =\sin^2\theta+\cos\theta \<\xi,e\>.$$  On the other hand, we know $$ K=\frac{1}{\det(\n^2 h+h\sigma)},$$ by inserting this into \eqref{h evolution GCF}, we derive the first equation \eqref{support fun eq. of capillary alpha-GCF} follows. The second equation in \eqref{support fun eq. of capillary alpha-GCF} can be derived by following the same argument in the proof of \cite[Proposition 2.4]{MWW}, and is therefore omitted here.
\end{proof}
 
Notice that \eqref{support fun eq. of capillary alpha-GCF} is a strictly parabolic equation as long as $\n^2 h+h\sigma >0$. % and$1-\cos\theta \leq \sin^2\theta+\cos\theta \<\xi,e\>\leq 2$,
Hence, by applying the standard parabolic theory with Neumann boundary condition, one can easily obtain the short-time existence for the flow \eqref{support fun eq. of capillary alpha-GCF}. This, in turn,  implies the short-time existence of flow \eqref{c-Gauss curvature flow}. Next, we show that the finite time existence of flow \eqref{c-Gauss curvature flow} (or equivalently flow \eqref{cap-GCF-2}).

\begin{prop} 
Let $\S_0$ be a strictly convex capillary hypersurface in $\ol{\R^{n+1}_+}$ with $\theta\in(0,\pi/2)$, then capillary hypersurface $\S_t$ of flow  \eqref{c-Gauss curvature flow} exists for a finite time $T^*<\infty$. %Moreover, $\S_t$ will converge in Haudorff distance to a point $p\in \bar\R^{n+1}_+$ as $t\to T^*$.
\end{prop}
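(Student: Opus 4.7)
The plan is to track the enclosed volume $V(t)\coloneqq{\rm{Vol}}(\wh\S_t)$ along the flow and show that $V'(t)$ equals a strictly negative constant depending only on $n$ and $\theta$. Because a strictly convex capillary body has positive enclosed volume, the linear decay of $V$ immediately forces the maximal strictly-convex existence time to be finite, with $T^*\le V(0)/|V'(0)|$.

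To compute $V'(t)$, observe that $\p\wh\S_t$ decomposes into the moving piece $\S_t$ and the flat piece $\wh{\p\S}_t\subset\p\RR^{n+1}_+$. The capillary condition \eqref{cap bdry condition} gives $\<\tilde\nu,e\>=0$ along $\p\S_t$, so the velocity $\p_tX=-K\tilde\nu$ is tangent to the supporting hyperplane along the wetted curve and the flat part contributes nothing to the volume change. Hence the standard first-variation identity for capillary flows yields
\[
V'(t)=\int_{\S_t}\<\p_tX,\nu\>dA=-\int_{\S_t}K\<\tilde\nu,\nu\>dA=-\int_{\S_t}K\,\ell\,dA,
\]
using $\<\tilde\nu,\nu\>=1+\cos\theta\<e,\nu\>=\ell$. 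Since $\S_t$ is strictly convex capillary, the capillary Gauss map $\tilde\nu\colon\S_t\to\C_\theta$ is a diffeomorphism with $K\,dA=d\s$, so $\int_{\S_t}K\,\ell\,dA=\int_{\C_\theta}\ell(\xi)\,d\s(\xi)$, a quantity depending only on $n$ and $\theta$.

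Finally, I would evaluate this constant by applying the divergence theorem to the position vector $X$ on $\wh{\C_\theta}$: the flat portion of the boundary contributes nothing since $\<X,e\>=-x_{n+1}=0$ on $\p\RR^{n+1}_+$, while on $\C_\theta$ the outward unit normal at $\xi$ is $\xi-\cos\theta e$, and the identity $|\xi-\cos\theta e|=1$ gives $\<\xi,\xi-\cos\theta e\>=\sin^2\theta+\cos\theta\<\xi,e\>=\ell(\xi)$. Hence $\int_{\C_\theta}\ell\,d\s=(n+1){\rm{Vol}}(\wh{\C_\theta})$, so $V'(t)\equiv-(n+1){\rm{Vol}}(\wh{\C_\theta})$ and $V(t)=V(0)-(n+1){\rm{Vol}}(\wh{\C_\theta})\,t$, forcing $T^*\le V(0)/\bigl((n+1){\rm{Vol}}(\wh{\C_\theta})\bigr)<\infty$. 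No serious obstacle arises: the proof reduces to a first-variation computation together with an isoperimetric identity for the capillary spherical cap. The subtler issue of whether $\S_t$ actually shrinks to a single point at $t=T^*$ (rather than collapsing to a lower-dimensional set) is separate and is handled via the a priori estimates developed in the remainder of Section~\ref{sec3}.
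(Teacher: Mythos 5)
Your proof is correct, but it takes a genuinely different route from the paper. The paper's argument is a pure comparison/barrier argument: it encloses $\S_0$ in a spherical cap $\widehat{\C_{r_0,\theta}}$, observes that $\C_{r(t),\theta}$ with $r(t)=\bigl[r_0^{n+1}-(n+1)t\bigr]^{1/(n+1)}$ is an exact solution of \eqref{c-Gauss curvature flow}, and invokes the avoidance principle to conclude that $\S_t$ cannot outlive this barrier, giving $T^*\le r_0^{n+1}/(n+1)$. You instead compute the first variation of the enclosed volume, using $\<\tilde\nu,\nu\>=\ell$, the change of variables $K\,dA=d\s$ under the capillary Gauss map, and the identity $\int_{\C_\theta}\ell\,d\s=\omega_\theta=(n+1){\rm{Vol}}(\wh{\C_\theta})$, to get $V'(t)\equiv-(n+1){\rm{Vol}}(\wh{\C_\theta})$. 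This is precisely the computation the paper defers to Section 4 (Proposition \ref{volume-preserving}, via \cite[Theorem 1.1]{WWX22}, and the normalization in Section 4.1), and it buys you more: the sharp bound $T^*\le{\rm{Vol}}(\wh\S_0)/\bigl((n+1){\rm{Vol}}(\wh{\C_\theta})\bigr)$, which is exactly the extinction time stated in Theorem \ref{main-existence}, whereas the comparison argument only yields the cruder bound in terms of the capillary outer radius of $\S_0$. The trade-off is that your route needs the first-variation formula for the enclosed volume of a capillary hypersurface (legitimate here since $\<\tilde\nu,e\>=0$ keeps $\p\S_t$ on $\p\RR^{n+1}_+$, as you note), while the paper's route needs only the parabolic comparison principle. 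Both are complete proofs of the stated proposition.
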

\begin{proof}
Without loss of generality, we assume that the origin lies in the interior of $\text{int}(\wh{\p{\S_0}})$. Then there exists some $r_0>0$ such that $\S_0\subset \widehat{\C_{r_{0}, \theta}}$. Define $$r(t)\coloneqq \left[ r_0^{ n+1}- ( n+1)t \right] ^{\frac{1}{ n+1}},$$ we find that  $ {\C_{r(t), \theta}}$ is a solution to flow \eqref{c-Gauss curvature flow} starting from $\S_{0}=\C_{r_{0}, \theta}$.
By the comparison principle (see, e.g., \cite[Proposition 4.2]{WW20}), we know that $\S_t\subset \widehat{\C_{r(t), \theta}}$.
It is obvious that $r(t)$ converges to zero at a finite time as $t\nearrow \frac{r_0^{n+1}}{n+1}$. Therefore, $\S_t$ cannot exist beyond that time.    
\end{proof}

\subsection{Evolution equations}\ 

In this subsection, we derive evolution equations for various geometric quantities.
For convenience, we introduce a linearized parabolic operator with respect to flow \eqref{support fun eq. of capillary alpha-GCF} 
\begin{eqnarray*}
	\mathcal{L}\coloneqq \partial_{t}-\ell  K \sum_{i,j=1}^n a^{ij} \nabla^{2}_{ij},
\end{eqnarray*}
where $(a^{ij})$ is the inverse matrix of  the principal radii matrix $A\coloneqq (a_{ij})$ with $a_{ij}\coloneqq h_{ij}+h\s_{ij}.$
It is easy to observe that the mean curvature $$H = \sum\limits_{i,j=1}^{n} a^{ij} \delta_{ij}.$$
In what follows, we adopt the Einstein summation convention.

 First, we have the following evolution equations about the support function $h$ and the capillary support function $u$, respectively.
 
\begin{prop}\label{evo-h}
    Along  flow \eqref{support fun eq. of capillary alpha-GCF}, the functions $h$ and $u$ satisfy  
        \begin{eqnarray}\label{h evolution GCF 1}
      \L h= K \ell (hH-n-1),
    \end{eqnarray}  and \begin{eqnarray}\label{h bar evolution GCF}
\L  u= 2 K a^{ij} \n_{i}u  \n_{j}\ell -(n +1)K+  K H u.
    \end{eqnarray}
Along  $\p \C_\theta$, %there hold 
  \begin{eqnarray}\label{robin of h}
        \n_\mu h =\cot\theta h,
    \end{eqnarray} 
    and 
     \begin{eqnarray}\label{robin of bar h}
        \n_\mu u =0. 
    \end{eqnarray} 
\end{prop}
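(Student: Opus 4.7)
The plan is to work on the spherical cap $\mathcal{C}_\theta$ using three inputs: the scalar flow identity $\partial_t h = -K\ell$, which follows from \eqref{support fun eq. of capillary alpha-GCF} since $K = 1/\det(\nabla^2 h + h\sigma)$; the algebraic identities $a^{ij}a_{ij} = n$ and $a^{ij}\sigma_{ij} = H$ in a $\sigma$-orthonormal frame; and a spherical identity for $\nabla^2\ell$ developed below. Given these, \eqref{h evolution GCF 1} is immediate, since $h_{ij} = a_{ij} - h\sigma_{ij}$ gives $a^{ij}\nabla^2_{ij}h = n - hH$, and hence
\[
\mathcal{L}h = \partial_t h - \ell K a^{ij}\nabla^2_{ij}h = -K\ell - \ell K(n - hH) = K\ell(hH - n - 1).
\]

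The main computational ingredient is the identity $\nabla^2_{ij}\ell + \ell\sigma_{ij} = \sigma_{ij}$ on $\mathcal{C}_\theta$. To derive this, I would write $\ell = 1 + \cos\theta\langle\xi - p_0, e\rangle$ with $p_0 = \cos\theta\,e$ the center of $\mathcal{C}_\theta$, and apply the standard spherical identity $\nabla^2\langle\tilde\xi, v\rangle = -\langle\tilde\xi, v\rangle\sigma$ (valid on the unit sphere with position vector $\tilde\xi = \xi - p_0$), which yields $\nabla^2\ell = (1 - \ell)\sigma$. Now expanding $h = u\ell$ in $a_{ij} = h_{ij} + h\sigma_{ij}$ gives $a_{ij} = \ell u_{ij} + u_i\ell_j + u_j\ell_i + u(\ell_{ij} + \ell\sigma_{ij})$, and using the identity to replace the parenthetical by $\sigma_{ij}$ produces
\[
a_{ij} = \ell\nabla^2_{ij}u + \nabla_i u\,\nabla_j\ell + \nabla_j u\,\nabla_i\ell + u\sigma_{ij}.
\]
Contracting with $a^{ij}$ and using the trace identities produces $\ell a^{ij}\nabla^2_{ij}u = n - 2a^{ij}\nabla_i u\,\nabla_j\ell - uH$; combined with $\partial_t u = \partial_t h/\ell = -K$, this delivers \eqref{h bar evolution GCF} in one line.

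The boundary identity \eqref{robin of h} is a direct restatement of the second line of \eqref{support fun eq. of capillary alpha-GCF}. For \eqref{robin of bar h}, I would first show the analogue $\nabla_\mu\ell = \cot\theta\,\ell$ on $\partial\mathcal{C}_\theta$: the co-normal relation \eqref{co-normal bundle} applied to $\mathcal{C}_\theta$ itself (with $\nu = \xi - \cos\theta\,e$), together with $\langle\xi, e\rangle = 0$ on $\partial\mathcal{C}_\theta$, yields $\langle\mu, e\rangle = \sin\theta$; hence $\nabla_\mu\ell = \cos\theta\langle\mu, e\rangle = \cos\theta\sin\theta = \cot\theta\cdot\sin^2\theta = \cot\theta\,\ell$. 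Then from $u = h/\ell$ one computes $\nabla_\mu u = (\ell\nabla_\mu h - h\nabla_\mu\ell)/\ell^2 = 0$. The main subtlety in the whole argument is the presence of the nonzero right-hand side in the spherical identity for $\ell$ (coming from the translation of the cap off the origin): this constant term is precisely what produces the clean cancellations in both the bulk formula \eqref{h bar evolution GCF} and the boundary condition for $u$.
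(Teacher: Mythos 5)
Your proposal is correct and follows essentially the same route as the paper: both arguments rest on the trace identities $a^{ij}a_{ij}=n$, $a^{ij}\sigma_{ij}=H$, the key relation $\nabla^2_{ij}\ell+\ell\sigma_{ij}=\sigma_{ij}$ on $\C_\theta$ (the paper's Eq.~\eqref{ell ij}), and the boundary computation $\<\mu,e\>=\sin\theta$ giving $\nabla_\mu\ell=\cot\theta\,\ell$ on $\p\C_\theta$. The only cosmetic difference is that for \eqref{h bar evolution GCF} you contract the expression of $a_{ij}$ in terms of $u$ rather than applying the quotient rule to $\L(h/\ell)$ as the paper does, and for \eqref{robin of h} you cite the boundary condition of the scalar flow directly instead of re-deriving it from \eqref{co-normal bundle}; both shortcuts are legitimate.
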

\begin{proof} 
First, we prove \eqref{h evolution GCF 1} and  \eqref{h bar evolution GCF}. Let $\{e_i\}_{i=1}^n$ be an orthonormal frame on $\C_\theta$ such that $e_n=\mu$.  Using  Eq. \eqref{h evolution GCF}, we have 
\begin{eqnarray*}
    \L h&=&\p_t h- K \ell a^{ij} \n_{ij} h=-K \ell - K \ell a^{ij} (a_{ij}-h\delta_{ij}).
\end{eqnarray*} 
Since $a^{ij}\delta_{ij}=H$, we conclude that \eqref{h evolution GCF 1} holds.

A direct computation yields
\begin{eqnarray}\label{h bar evo GCF 2}
    \L u&=& \frac{\L h}{\ell} + K\ell a^{ij} \left( \frac{ 2\n_{i}h\n_{j} \ell}{\ell^2} +\frac{ h\n_{i}\n_{j}\ell}{\ell^2}-\frac{2h \n_{i}\ell \n_{j}\ell}{\ell^3}\right).
\end{eqnarray}
Since
\begin{eqnarray}\label{ell ij}
    \n_{i}\n_{j}\ell+\ell \delta_{ij}=-\cos\theta \<\xi-\cos\theta e,e\>\delta_{ij}+\ell \delta_{ij}= \delta_{ij},
\end{eqnarray} substituting \eqref{h evolution GCF 1} and \eqref{ell ij} into     \eqref{h bar evo GCF 2}, we get \eqref{h bar evolution GCF}.

Next, we derive the boundary conditions \eqref{robin of h} and \eqref{robin of bar h}.
Since $D_{e_i}X$ is tangent to $\Sigma$ at $X(\xi)$, we have
  	\begin{eqnarray*}
			\<\nu (X),D_{e_i}X\>=0,\quad 1\leq i\leq n.
		\end{eqnarray*}
		If $\xi\in \p \C_{\theta}$, then $X(\xi) \in \p \ov{\R^{n+1}_+}$. From \eqref{co-normal bundle}, we obtain
		\begin{eqnarray*}
			\<\mu,X\>&=&\< \sin\theta e+\cos\theta \overline{\nu},X\>=\cos\theta \<\overline{\nu} ,X\>\\&=&\cos\theta \<\cos\theta \mu+\sin \theta \nu(X),X\>=\cos^2\theta \<\mu, X\>+\sin\theta\cos\theta h,
		\end{eqnarray*}
        which implies  that
		\begin{eqnarray*}
			\<\mu,X\>=\cot\theta h.
		\end{eqnarray*} 
        Differentiating the support function $h$ gives
		\begin{eqnarray*}
			\n_{{\mu}}h&=&\< D_\mu T^{-1}(\xi ), X\>+\< T^{-1}(\xi ),D_\mu X\>
			\\&=& \<D_\mu\xi,  X\>= \<\mu,X\>,
		\end{eqnarray*}
		where $T^{-1}(\xi)\coloneqq  \nu (X(\xi))$ is the unit normal at $X(\xi)$. 
	Combining the above results yields
		\begin{eqnarray*} 
			\n_{{\mu}}h=\cot\theta h. 
		\end{eqnarray*}
Using \eqref{co-normal bundle} again, we have
  \begin{eqnarray}\notag 
      \n_\mu (\sin^2\theta+\cos\theta\<\xi,e\>)&=& \cos\theta \<\mu,e\>=\cos\theta \sin\theta \\&= & \cot\theta (\sin^2\theta+\cos\theta\<\xi,e\>).\label{robin of xi,e} 
  \end{eqnarray}
Consequently, \eqref{robin of bar h} can be derived from \eqref{robin of xi,e} and \eqref{robin of h}.
\end{proof}

The evolution equation of the Gauss curvature $K$ will be useful for our subsequent analysis.
\begin{prop}\label{evo of K}
    Along  flow \eqref{support fun eq. of capillary alpha-GCF}, there holds
  \begin{eqnarray}\label{K evol GCF}
      \L K= 2 Ka^{ij}\n_{i}K \n_{j}\ell+K^{2} H.
  \end{eqnarray}
On $\p\C_\theta$,
\begin{eqnarray}\label{robin of K}
        \n_\mu K  =0 .  
    \end{eqnarray}  
\end{prop}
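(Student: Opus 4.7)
The plan is to prove the two claims independently. For the interior evolution equation \eqref{K evol GCF}, the approach is to differentiate the identity $K=[\det(a_{ij})]^{-1}$ in time using Jacobi's formula and then invoke the scalar flow equation together with \eqref{ell ij}. For the Robin boundary condition \eqref{robin of K}, the idea is simpler still: use the flow equation to express $K$ as $-\partial_t h/\ell$ globally, and then exploit the fact that both $h$ and $\ell$ satisfy the same Robin condition on $\p\C_\theta$ so that the derivative of their quotient vanishes there.

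For the interior computation, since $a_{ij}=h_{ij}+h\sigma_{ij}$ and the metric $\sigma$ on $\C_\theta$ is time-independent, Jacobi's formula gives
\begin{equation*}
\partial_t K=-K a^{ij}\partial_t a_{ij}=-K a^{ij}\bigl[\nabla_{ij}(\partial_t h)+\partial_t h\,\sigma_{ij}\bigr].
\end{equation*}
Substituting $\partial_t h=-K\ell$ from \eqref{support fun eq. of capillary alpha-GCF} and expanding $\nabla_{ij}(K\ell)=\ell K_{ij}+K_i\ell_j+K_j\ell_i+K\ell_{ij}$, I would then use the identity \eqref{ell ij}, namely $\ell_{ij}+\ell\sigma_{ij}=\sigma_{ij}$, to collapse the $K\ell_{ij}+K\ell\sigma_{ij}$ terms into $K\sigma_{ij}$. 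Since $a^{ij}\sigma_{ij}=H$ and $a^{ij}$ is symmetric, rearranging yields
\begin{equation*}
\partial_t K=K\ell\, a^{ij}K_{ij}+2K a^{ij}K_i\ell_j+K^2 H,
\end{equation*}
and subtracting the leading term gives the claimed formula for $\L K$.

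For the boundary condition, I would observe that the scalar equation \eqref{support fun eq. of capillary alpha-GCF} can be rewritten as $K=-\partial_t h/\ell$. Since the metric on $\C_\theta$ is time-independent, $\partial_t$ and $\nabla_\mu$ commute, so
\begin{equation*}
\nabla_\mu K=-\frac{\partial_t(\nabla_\mu h)}{\ell}+\frac{\partial_t h\,\nabla_\mu \ell}{\ell^2}\quad\text{on }\p\C_\theta.
\end{equation*}
By the Robin boundary condition \eqref{robin of h}, $\partial_t(\nabla_\mu h)=\cot\theta\,\partial_t h$ on $\p\C_\theta$, and by \eqref{robin of xi,e} we have $\nabla_\mu\ell=\cot\theta\,\ell$ there. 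Substituting, the two contributions cancel and $\nabla_\mu K=0$.

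I do not expect any serious obstacle: the interior identity is a routine differentiation, with the only delicate point being to recognize that \eqref{ell ij} is exactly what is needed to absorb the second covariant derivatives of $\ell$ together with the algebraic term $K\ell\sigma_{ij}$ produced by $\partial_t h\,\sigma_{ij}$. The boundary identity is even more transparent once one notices that $h$ and $\ell$ share the Robin coefficient $\cot\theta$, so their quotient is constant along $\mu$ at the boundary in the relevant sense.
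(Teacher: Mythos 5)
Your proposal is correct and follows essentially the same route as the paper: the interior identity is obtained via Jacobi's formula for $\partial_t K$, substitution of $\partial_t h=-K\ell$, and the identity \eqref{ell ij}, while the boundary condition comes from time-differentiating the Robin condition \eqref{robin of h} and combining it with $\nabla_\mu\ell=\cot\theta\,\ell$ from \eqref{robin of xi,e}. The only cosmetic difference is that you phrase the boundary step as a quotient rule for $K=-\partial_t h/\ell$ rather than a product rule for $\partial_t h=-K\ell$, which is equivalent.
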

\begin{proof}
Direct calculations yield
\begin{eqnarray*}
   \L K &=&  \p_t K-K \ell a^{ij}\n_{ij} K\\&=&-Ka^{ij} (\n_{ij}(\p_t h)+\p_t h\delta_{ij})- K\ell a^{ij}\n_{ij} K
    \\&=& K a^{ij}(\n_{ij}(K\ell)+K\ell \delta_{ij})- K \ell a^{ij}\n_{ij} K
    \\&=&2 K a^{ij} \n_{i}K\n_{j}\ell  +K^{2} H,
\end{eqnarray*}where \eqref{ell ij} has been used in the last equality. 
To show \eqref{robin of K}, by taking the time derivative on \eqref{robin of h} along $\p\C_\theta$ we have
\begin{eqnarray*}
    \n_\mu (\p_t h)=\cot\theta \p_t h.
\end{eqnarray*}
In view of $$\p_t h=-K \left(\sin^2\theta+\cos\theta \<\xi,e\>\right),$$ together with \eqref{robin of xi,e}, then  we have \eqref{robin of K} on $\p\C_\theta$.
\end{proof}

\subsection{Curvature estimates}\ 
 
In this subsection, we establish curvature estimates under the assumption that the capillary inner radius $\rho_-(\wh\S_t,\theta)$ is bounded below by a positive constant. The approach is inspired by Tso \cite{Tso85} and Guan-Ni \cite{GN}. We remark that the two-sided estimates for the Gauss curvature hold for all contact angles $\theta \in (0, \pi)$.

First of all,  we prove that the Gauss curvature of the solution $\S_t$ to flow \eqref{c-Gauss curvature flow} is uniformly bounded from below. 
\begin{prop}\label{lower bound of K}
    Let $K_0$ be the Gauss curvature of initial strictly convex capillary  hypersurface $\S_0$ with $\theta\in(0,\pi)$, and $\S_t$ solves \eqref{c-Gauss curvature flow} on $[0,T^*)$.  Then there holds
    \begin{eqnarray}\label{lower bdd K}
        K (\xi, t) \geq \min_{\C_\theta} K_0,
    \end{eqnarray} for all $(\xi,t)\in \C_\theta \times [0,T^*)$.
\end{prop}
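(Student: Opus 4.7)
The plan is to apply a parabolic minimum principle to the evolution equation of $K$ derived in Proposition \ref{evo of K},
$$\mathcal{L} K = 2 K a^{ij} \nabla_i K \nabla_j \ell + K^2 H,$$
together with the homogeneous Neumann boundary condition $\nabla_\mu K = 0$ on $\partial \mathcal{C}_\theta$. The key structural observation driving the proof is that the zeroth-order term $K^2 H$ is strictly positive (strict convexity of $\Sigma_t$ is preserved on $[0, T^\ast)$, so $K, H > 0$), while the drift term $2 K a^{ij} \nabla_i K \nabla_j \ell$ vanishes wherever $\nabla K = 0$. Heuristically this already forces the spatial minimum of $K$ to be nondecreasing in time.

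Concretely, set $m \coloneqq \min_{\mathcal{C}_\theta} K_0$ and argue by contradiction: suppose there is a first time $t_\ast \in (0, T^\ast)$ at which $K(\xi_\ast, t_\ast) = m$ for some $\xi_\ast \in \overline{\mathcal{C}_\theta}$, with $K \geq m$ on $\overline{\mathcal{C}_\theta} \times [0, t_\ast]$. At $(\xi_\ast, t_\ast)$ one then has $\partial_t K \leq 0$, and $\nabla K = 0$ (the tangential components vanish by the minimum condition on $\overline{\mathcal{C}_\theta}$, and the normal component vanishes automatically in the interior or by the Neumann condition on the boundary). A direction-by-direction Taylor expansion at the minimum, using the vanishing normal derivative in the boundary case, further shows $\nabla^2 K \geq 0$ at $(\xi_\ast, t_\ast)$, so $a^{ij} \nabla_{ij} K \geq 0$ by positive definiteness of $(a^{ij})$. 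Rewriting $\partial_t K = \mathcal{L} K + \ell K a^{ij} \nabla_{ij} K$ and substituting the evolution equation gives
$$0 \geq \partial_t K(\xi_\ast, t_\ast) = \ell K a^{ij} \nabla_{ij} K + 2 K a^{ij} \nabla_i K \nabla_j \ell + K^2 H \geq K^2 H > 0,$$
a contradiction. Hence no such $t_\ast$ exists and $K \geq m$ throughout $\overline{\mathcal{C}_\theta} \times [0, T^\ast)$.

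The main delicacy is the boundary case: one must confirm that the homogeneous Neumann condition promotes a boundary minimum to an effective interior minimum, in the sense that both $\nabla K = 0$ and $\nabla^2 K \geq 0$ still hold. This can be justified either by the direct Taylor-expansion argument sketched above, by even reflection of $K$ across $\partial \mathcal{C}_\theta$ (turning the boundary minimum into an interior one), or by appealing to Hopf's boundary point lemma applied to $K - m$, which would instead produce $\nabla_\mu K(\xi_\ast, t_\ast) < 0$ and contradict $\nabla_\mu K = 0$ directly. Any of these routes is routine once the evolution equation and the Neumann boundary condition for $K$ (Proposition \ref{evo of K}) are in hand.
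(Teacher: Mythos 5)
Your proposal is correct and follows essentially the same route as the paper: the paper's proof simply observes from Proposition \ref{evo of K} that $\L K \geq 0$ modulo $\n K$ together with $\n_\mu K=0$ on $\p\C_\theta$, and invokes the maximum principle, which is exactly the argument you spell out in detail (including the boundary case via Hopf's lemma).
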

\begin{proof}
    
From Proposition \ref{evo of K},  we see 
\begin{eqnarray*}
    \L K \geq 0 ~~~\text{ mod } \n K ,~~ \text{ in } \C_\theta,
\end{eqnarray*}
and $\n_\mu K=0$ on $\p \C_\theta$. Then \eqref{lower bdd K} follows directly from the maximum principle.
\end{proof}

Next, if the capillary inner radius of $\wh\S_t $ has some positive lower bound, we show that the Gauss curvature of the solution $\S_{t}$ to flow \eqref{c-Gauss curvature flow} is uniformly bounded from above. In order to prove it, we introduce a new test function
\begin{eqnarray*}
    \varphi\coloneqq \frac{K}{u-c_0},
\end{eqnarray*}where $u$ is the capillary support function and $c_0\coloneqq \frac 1 2 \inf\limits_{\C_\theta\times[0, T^*)} u(\xi,t)$. The function $\varphi$ is a natural adaptation of the one used by Tso \cite{Tso85} in the capillary setting, see also a similar idea in \cite[Section 3.2]{SW} for a mean curvature type flow. A key observation for us to introduce this new test function is that it satisfies a homogeneous Neumann boundary value condition on $\p\C_\theta$, see Eq. \eqref{neumann-tso-fun}.

\begin{prop}\label{upper bound of K}
Let $\S_t$ be the solution of flow \eqref{c-Gauss curvature flow}  on $[0,T^*)$ with $\theta \in (0,\pi)$. If the capillary inner radius $\rho_-(\wh\S_t,\theta)$ of $\S_t$ has a uniform positive lower bound $\varepsilon_0>0$,  then there exists a positive constant $C$, depending only on $\frac{1}{\varepsilon_0}$ and the initial data, such that
    \begin{eqnarray*}
  K(\xi,t) \leq C ,
    \end{eqnarray*}
for all $(\xi,t)\in \C_\theta \times [0,T^*)$.
\end{prop}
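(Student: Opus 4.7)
The plan is to adapt Tso's argument \cite{Tso85} to the capillary setting, exploiting the fact that both $K$ and $u$ satisfy homogeneous Neumann conditions on $\partial\mathcal{C}_\theta$. First, using the contracting property of flow \eqref{c-Gauss curvature flow} together with the capillary inner radius hypothesis, I fix any $T'<T^{\ast}$, choose an inscribed capillary cap $\widehat{\mathcal{C}_{\varepsilon_0,\theta}(z_0)}\subset\widehat{\Sigma_{T'}}$, and translate so that $z_0$ becomes the origin. Since $\widehat{\Sigma_t}\supset\widehat{\Sigma_{T'}}$ for every $t\in[0,T']$, Remark~\ref{rem-cap-inner-radius} gives $u(\xi,t)\geq\varepsilon_0$ uniformly on $\mathcal{C}_\theta\times[0,T']$. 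This translation preserves every evolution equation in Propositions~\ref{evo-h} and~\ref{evo of K} and the Robin boundary conditions, because $\langle z_0,e\rangle=0$ and any linear function on the sphere has vanishing principal radii matrix. Setting $c_0\coloneqq\varepsilon_0/2>0$, Tso's test function
\[
\varphi(\xi,t)\coloneqq\frac{K(\xi,t)}{u(\xi,t)-c_0}
\]
is then positive and bounded on $\mathcal{C}_\theta\times[0,T']$.

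Second, one computes $\mathcal{L}\varphi$ directly from Propositions~\ref{evo-h} and~\ref{evo of K}. Substituting the critical-point relation $\nabla_i K=K\,\nabla_i u/(u-c_0)$ makes the $\nabla K$--$\nabla u$ and $|\nabla u|^2$ type terms cancel pairwise, leaving the clean identity
\[
\mathcal{L}\varphi\Big|_{\nabla\varphi=0}=\frac{K^2}{(u-c_0)^2}\bigl[(n+1)-c_0 H\bigr].
\]
The boundary conditions \eqref{robin of bar h} and \eqref{robin of K} immediately give $\nabla_\mu\varphi=0$ on $\partial\mathcal{C}_\theta$; this is the decisive structural point and explains why one divides by $u-c_0$ rather than $h-c_0$. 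Analogous test functions were employed for other capillary problems in \cite{MWW2024,SW,WWX23}.

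Third, I apply the parabolic maximum principle to $\varphi$ on $\overline{\mathcal{C}_\theta}\times[0,T']$. If the maximum is attained at $t=0$, the bound comes from the initial data. Otherwise, Hopf's boundary lemma combined with $\nabla_\mu\varphi=0$ forces the spatial maximum to be interior, so $\mathcal{L}\varphi\geq 0$ there and the identity above applies. Combined with the arithmetic--geometric mean inequality $H=\sum_i\kappa_i\geq nK^{1/n}$ (valid by the strict convexity of $\Sigma_t$), this yields $K^{1/n}\leq(n+1)/(c_0 n)$, hence a uniform bound $\varphi\leq C(\varepsilon_0,n)$ on the whole parabolic domain. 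Since $u$ is bounded above by the outer radius of $\widehat{\Sigma_0}$ (again by contraction), we conclude $K=\varphi(u-c_0)\leq C(\varepsilon_0,n,\Sigma_0)$, and letting $T'\nearrow T^{\ast}$ gives the asserted global bound.

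The main obstacle lies in the first step: the cancellations in the computation of $\mathcal{L}\varphi$ depend on a \emph{fixed} reference point, so a single $z_0$ must be chosen for which $u_{z_0}$ stays bounded below throughout the entire parabolic interval $[0,T']$. A purely pointwise-in-time inner radius bound is not enough. This difficulty is resolved precisely by the monotone contraction of flow \eqref{c-Gauss curvature flow}, which allows the inscribed capillary cap at the \emph{final} time $T'$ to control $u_{z_0}$ at all earlier times.
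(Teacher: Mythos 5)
Your proposal is correct and follows essentially the same route as the paper: the same test function $\varphi=K/(u-c_0)$ with $c_0=\varepsilon_0/2$, the same reduction $\mathcal{L}\varphi\equiv\varphi^2\bigl[(n+1)-c_0H\bigr]$ modulo $\nabla\varphi$, the same use of $H\geq nK^{1/n}$, the homogeneous Neumann condition $\nabla_\mu\varphi=0$ from \eqref{robin of bar h} and \eqref{robin of K}, and the maximum principle. Your explicit justification that a single reference point $z_0$ works on all of $[0,T']$ via the monotone contraction (inscribing the cap at the final time) is a careful spelling-out of a step the paper treats more briefly, but it is the same argument.
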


\begin{proof} 
First,  if the capillary inner radius of $\wh\S_t $ has a uniformly positive lower bound $\varepsilon_{0}$ for all $t\in[0, T^*)$, then according to  Remark \ref{rem-cap-inner-radius},
there exists a point $z_{0}\in \text{int}( \widehat{\partial\S_{t}})$, such that $u_{z_0}(\xi, t)\geq \varepsilon_0 $ for all $(\xi,t)\in\C_\theta\times [0, T^*)$. Without loss of generality, we assume that $z_{0}$ is the origin  and  consider the function    \begin{eqnarray*}
    \varphi(\xi,t)\coloneqq \frac{K}{u-c_0},
\end{eqnarray*} where $c_0\coloneqq \frac{\varepsilon_0} 2$. Thus, $\varphi$ is well-defined for all $(\xi,t)\in \C_\theta\times[0, T^*)$.

Direct computation yields 
    \begin{eqnarray}\label{var euqation}
        \L \varphi &=&\frac{\L K}{u-c_0} -\frac{K}{(u-c_0)^2} \L u+ \frac{2 K\ell}{u-c_0} a^{ij} \n_{i}\varphi\n_{j} u.
    \end{eqnarray}
    Inserting Eq. \eqref{h bar evolution GCF} and \eqref{K evol GCF} into \eqref{var euqation}, we have
    \begin{eqnarray*}
        \L \varphi &=& \frac{ 2K a^{ij}\n_{i}K\n_{j}\ell+ K^{2} H}{u-c_0} -\frac{K}{(u-c_0)^2} \left(-(n+1)K+K H u\right)  \\&& -\frac{2K}{(u-c_0)^2} K a^{ij} \n_{j}u \n_{i}\ell + \frac{2 K \ell}{u-c_0} a^{ij} \n_{i} \varphi \n_{j} u        \\&=&  2  K a^{ij} \n_{i}\varphi\n_{j}\ell+2 \varphi \ell a^{ij} \n_{i}\varphi\n_{j} u+ \varphi^2 (n +1-c_0  H)\\&\leq & n  c_0^{1+\frac{1}{n  }}\varphi^2\left(-\varphi^{\frac{1}{n  }} +\frac{n +1}{n  c_0^{1+\frac 1 {n}}}\right),\quad \text{ mod } \n \varphi,
    \end{eqnarray*}where in the last inequality we have used the fact that $H\geq n K^{\frac 1 n}$ and $ u\geq 2c_0$. 
   On the other hand, in view of \eqref{robin of bar h} and \eqref{robin of K}, we have 
    \begin{eqnarray}\label{neumann-tso-fun}
        \n_\mu \varphi=  0,\quad \text{ on } \p \C_\theta.
    \end{eqnarray}
    By the maximum principle, we have
    \begin{eqnarray*}
        \varphi \leq \max\left\{\frac{1}{c_0} \left( \frac{n +1}{ nc_0}\right)^{n}, \max_{\C_\theta}\varphi(\cdot, 0) \right\} .
    \end{eqnarray*}
   It follows
\begin{eqnarray}\label{K-upper-bound}
        K\leq \max\left\{\left(\frac{n+1}{n}\right)^{n}\frac{\max\limits_{\C_{\theta}}u(\cdot, t)}{c_{0}^{1+n}}, \frac{ \max\limits_{\C_{\theta}}u(\cdot, t)\cdot K_{0}}{u(\cdot,0)-c_{0}}
        \right\}.
    \end{eqnarray}
Since $\p_t u \leq 0$ and $\n_\mu u =0$ on $\p \C_\theta$,  we have $$u\leq  \max\limits_{\C_\theta} u(\cdot, 0).$$  Therefore, as long as  flow \eqref{c-Gauss curvature flow} exists and $ u\geq 2c_0$, there holds
\begin{eqnarray*}
        K   \leq  \max\limits_{\C_\theta} u(\cdot, 0) \cdot  \max\left \{\frac{1}{c_0} \left( \frac{n+1}{n c_0}\right)^{n}, \frac{\max\limits_{\C_\theta}K_0}{c_0}\right\} .
    \end{eqnarray*}Thus the statement follows.
\end{proof}

Finally, if the capillary inner radius remains positive, we establish the positive lower and upper bounds of the principal radii. This, in turn, implies that flow \eqref{c-Gauss curvature flow} (also equivalently \eqref{support fun eq. of capillary alpha-GCF}) remains uniformly parabolic on any time interval and the strict convexity is preserved along flow \eqref{c-Gauss curvature flow}.

\begin{prop}\label{curvature est}
Let $\S_t$ be the solution of the flow \eqref{c-Gauss curvature flow} on the time interval $[0, T^*)$. If $\theta\in (0,\pi/2)$ and the capillary inner radius of $\S_t$ has a uniform positive lower bound $\varepsilon_0$,  then there exist positive constants $C_0$ and $C_1$ such that 
    \begin{eqnarray}\label{upper-lower-curvature est}
      C_0^{-1}\leq \k_i\leq  C_1 \quad \text{ for all } 1\leq i\leq n,
    \end{eqnarray} 
    where the constant $C_1$ only depends  on $n, \frac{1}{\varepsilon_0}$ and  initial data. 
    In particular,   the constant $C_0$ only depends on the initial data and $n$. 
\end{prop}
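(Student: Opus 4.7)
The plan is to reduce the two-sided principal curvature bound to an upper bound on the largest eigenvalue $\lambda_{\max}$ of the principal radii matrix $A=(a_{ij})=(\n^2 h+h\sigma)$, and then derive that bound by a maximum-principle argument applied to an auxiliary function $\Phi$ adapted to the capillary boundary. By Propositions \ref{lower bound of K} and \ref{upper bound of K}, $K=1/\det A$ is uniformly bounded above and below on $\C_\theta\times[0,T^*)$, with the upper bound depending on $1/\varepsilon_0$. If we establish $\lambda_{\max}(A)\le C_0$ with $C_0$ depending only on $n$ and the initial data, then $\kappa_i\ge 1/\lambda_{\max}\ge C_0^{-1}$ gives the desired lower bound, while $\kappa_i\le 1/\lambda_{\min}\le K\,\lambda_{\max}^{n-1}\le C_1$ together with the upper bound on $K$ yields the upper bound $C_1$ depending on $n$, $1/\varepsilon_0$ and the initial data. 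Strict convexity and uniform parabolicity of \eqref{support fun eq. of capillary alpha-GCF} then follow automatically.

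Next I would introduce an auxiliary function of the form
$$
\Phi(\xi,t)\coloneqq \log \lambda_{\max}(A)(\xi,t)+\Psi(u),
$$
where $\Psi$ is a smooth function of the capillary support function $u$, chosen so that $\n_\mu \Phi\le 0$ on $\p\C_\theta$. The motivation is that Proposition \ref{evo-h} provides the homogeneous Neumann identity $\n_\mu u=0$, so building the correction out of $u$ keeps the boundary behaviour under control, in the same spirit as the test function $\varphi=K/(u-c_0)$ used in Proposition \ref{upper bound of K}. At an interior maximum point of $\Phi$, I would use the standard perturbation trick for $\lambda_{\max}$, replacing it locally by $A(v,v)$ for a fixed unit eigenvector $v$ realising the maximum, and diagonalising $A$ at that point. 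Differentiating \eqref{support fun eq. of capillary alpha-GCF} twice and commuting covariant derivatives on $\C_\theta$ gives the evolution of $a_{ij}$ under $\L$; combining this with the concavity of $\log\det$ on positive-definite matrices and the two-sided bound on $\det A$, the bad terms can be absorbed so that the maximum principle delivers $\lambda_{\max}\le C_0$ with $C_0$ depending only on $n$ and the initial data.

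The main obstacle is the boundary analysis, which is where the assumption $\theta\in(0,\pi/2)$ enters. The key geometric fact to establish is that along $\p\C_\theta$ the co-normal $\mu$ is a principal direction of $A$, namely $a_{\mu j}=0$ for every $j$ tangent to $\p\C_\theta$; this can be obtained by differentiating the Robin identity $\n_\mu h=\cot\theta\, h$ in tangential directions and using the rotational symmetry of the boundary data. With $\mu$ a principal direction at the boundary, the $\mu$-derivative of $\lambda_{\max}$ reduces to $\n_\mu a_{\mu\mu}$, which can in turn be computed from the Robin boundary condition for $h$. The outcome is a boundary inequality whose sign is controlled by $\cos\theta$; the hypothesis $\cos\theta>0$, equivalent to $\theta<\pi/2$, is precisely what makes the resulting bound on $\n_\mu \Phi$ point in the right direction, and for $\theta\in[\pi/2,\pi)$ the argument fails to be signed. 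Once this boundary inequality is secured, the maximum principle concludes the proof.
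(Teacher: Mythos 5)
Your overall strategy is the paper's: a maximum-principle argument on a boundary-adapted quantity built from the principal radii, with the sign of the conormal derivative coming from the Robin condition and $\cot\theta>0$, followed by the upper bound $\kappa_i\le K\lambda_{\max}^{n-1}$ using the $K$-bounds of Propositions \ref{lower bound of K} and \ref{upper bound of K}. The paper, however, works with the trace $\Phi=\Delta h+nh=\sum_i\kappa_i^{-1}$ rather than $\log\lambda_{\max}(A)+\Psi(u)$, and uses no correction term at all. This choice is not cosmetic: $\Phi$ is smooth (no eigenvalue-perturbation trick needed), and at the boundary the tangential and normal contributions combine into the single signed expression $\nabla_\mu\Phi=\cot\theta\sum_{\beta<n}\bigl(1-a^{\beta\beta}/a^{nn}\bigr)(a_{nn}-a_{\beta\beta})\le 0$, which is where $\theta<\pi/2$ enters. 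In the interior, the decisive mechanism is the zeroth-order term $-\ell K\Phi H$ together with $H\ge nK^{1/n}$ and the uniform lower bound on $K$ (which depends only on the initial data, hence the stated dependence of $C_0$); your appeal to ``concavity of $\log\det$ and the two-sided bound on $\det A$'' does not identify this term, and a two-sided bound on $\det A$ alone cannot control $\lambda_{\max}$.

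The concrete gap is in your boundary analysis. First, since $\nabla_\mu u=0$, your correction $\Psi(u)$ contributes nothing on $\p\C_\theta$, so the entire sign must come from $\nabla_\mu\lambda_{\max}$, and this requires a case distinction you do not make: if the maximal eigendirection is tangential, say $e_{\beta_0}$, then indeed $\nabla_\mu a_{\beta_0\beta_0}=\cot\theta(a_{nn}-a_{\beta_0\beta_0})\le 0$ follows from differentiating the Robin condition; but if $\lambda_{\max}=a_{\mu\mu}$, the Robin condition for $h$ alone does \emph{not} determine $\nabla_\mu a_{\mu\mu}$ --- the paper obtains it only by invoking the additional homogeneous Neumann condition $\nabla_\mu K=0$ (Proposition \ref{evo of K}), which yields $\nabla_\mu a_{nn}=-\sum_\beta(a^{\beta\beta}/a^{nn})\nabla_\mu a_{\beta\beta}$ and only then a sign. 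Your sentence ``which can in turn be computed from the Robin boundary condition for $h$'' is therefore false as stated, and this is precisely the step where the argument would stall. Once you add $\nabla_\mu K=0$ and the case analysis (or simply switch to the trace, which packages both cases at once), the rest of your outline can be completed along the lines of the paper.
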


\begin{proof}
To show  \eqref{upper-lower-curvature est}, we first establish the positive lower bound of the principal curvatures. Define 
\begin{eqnarray*}
    \Phi\coloneqq  \Delta h+nh=\sum_{i=1}^n \frac{1}{\k_i}, 
\end{eqnarray*} which is the harmonic  curvature of $\S_t$. 

First of all, we \textbf{claim} that if $\theta\in (0,\pi/2)$, there holds
\begin{eqnarray} 
\label{boundary of Phi} 
    \n_\mu \Phi \leq 0 \quad \text{ on } \p \C_\theta.
\end{eqnarray} 
In fact, along $\p{\mathcal C}_\theta$, we choose  an orthonormal frame $\{e_i\}_{i=1}^n$  such that $e_n=\mu$. Since  $\n_\mu h=\cot\theta h$ on $\p \C_\theta$, then for all $1\leq \beta\leq n-1$,  
		\begin{eqnarray*}
			\n_\beta (h_n)=\cot\theta \n_\beta h,
		\end{eqnarray*} 
  together with the Gauss-Weingarten equation of $\p \C_\theta \subset \C_\theta$,  we have
		\begin{eqnarray}\label{h_1n}
			h_{\beta n}=	\n^2h(e_\beta ,e_n)=\n_{e_\beta}(\n h(e_n))-\n h(\n_{e_\beta}e_n) =0.
		\end{eqnarray}
		Applying the Gauss-Weingarten equation of $\p \C_\theta \subset \C_\theta$ again and combining with \eqref{h_1n}, we have 
		\begin{eqnarray*}
			\n_{\beta\beta n}h&=&(\n^3 h)(e_\beta,e_\beta,e_n)	=(\n_{e_\beta }(\n^2 h))(e_\beta ,e_n)\\&=& \n_{e_\beta}(\n^2 h(e_\beta,e_n))-\n^2 h(\n_{e_\beta }e_\beta,e_n)-\n^2 h(e_\beta,\n_{e_\beta}e_n)
			\\&=&\cot\theta h_{nn}-\cot\theta h_{\beta \beta}.
		\end{eqnarray*} 
		The commutator formula  on $\C_{\theta}$ implies  
		\begin{eqnarray*}
			\n_{n\beta\beta } h=\n_{\beta\beta n}h-h_n=\cot\theta ( h_{nn}-  h_{\beta\beta }- h),
		\end{eqnarray*} then it follows
  \begin{eqnarray}\label{a aan}
     \n_{n} a_{\beta\beta} =\n_{n}h_{\beta\beta} + h_{n}= \cot\theta (a_{nn}-a_{\beta\beta}).
  \end{eqnarray}  
   From \eqref{robin of K}, we have
    \begin{eqnarray*}
   \sum_{\beta=1}^{n-1}     a^{\beta\beta}\n_{n}a_{\beta\beta}+a^{nn}\n_{n}a_{nn}=0,
    \end{eqnarray*}that is, \begin{eqnarray*}
       \n_{n} a_{nn}=-   \sum_{\beta=1}^{n-1}    
 \frac{a^{\beta\beta }}{a^{nn}}\n_{n}a_{\beta\beta},
    \end{eqnarray*}
 which,   together with \eqref{a aan}, implies
\begin{eqnarray*}
    \n_\mu \Phi&=&   \sum_{\beta=1}^{n-1}\n_\mu a_{\beta\beta}+\n_\mu a_{nn}
    \\&=& \cot\theta  \sum_{\beta=1}^{n-1} \left( 1-\frac{ a^{\beta\beta}}{a^{nn}}\right) (a_{nn}-a_{\beta\beta}) \leq 0,
\end{eqnarray*}
where the last inequality follows from the positive definiteness of $(a_{ij})$ and $\theta \in(0,\pi/2)$. Hence, we prove the assertion \eqref{boundary of Phi}.

Next, we compute the evolution equation of $\Phi$, bear in mind that $$\Delta \ell+n\ell =n.$$ From Eq. \eqref{prop-scalar equ},  we derive
\begin{eqnarray*}
    \p_t \Phi &=&\Delta (\p_t h)+n\p_t h
    \\&=& -\ell \Delta K-2\n \ell \cdot \n K-K\Delta \ell   -n\ell K
    \\&=&- \ell K \Delta K-2\n \ell\cdot \n K-nK,
\end{eqnarray*}
and
\begin{eqnarray}\label{eq_add1}
    \n_i K=-Ka^{kl}\n_{i}a_{kl},
\end{eqnarray}
By the covariant derivative commutator formula on $\C_\theta$, we have
\begin{eqnarray*}
\n_{j}\n_{i}h_{pq}=\n_{q}\n_{p}h_{ij}+2(h_{pq}\delta_{ij}-h_{ij}\d_{pq})+h_{iq}\d_{pj}-h_{jp}\d_{iq},
\end{eqnarray*} which implies
\begin{eqnarray*} \n_{j}\n_{i}a_{pq}&=&\n_{j}\n_{i}h_{pq}+h_{ij}\d_{pq}
% \\&=&A_{ij,pq} +u_{pq}\delta_{ij}-u_{ij}\d_{pq}+u_{iq}\d_{pj}-u_{jp}\d_{iq}
 \\&=&\n_{q}\n_{p}a_{ij} +a_{pq}\delta_{ij}-a_{ij}\d_{pq}+a_{iq}\d_{pj}-a_{jp}\d_{iq}.
\end{eqnarray*} 
Then
\begin{eqnarray*}
    \n_{ij} K&=& -\n_j K a^{kl}\n_{i}a_{kl}-Ka^{kl}\n_{j}\n_{i}a_{kl}-K\n_{j}a^{kl}\n_{i}a_{kl}
  %  \\&=&-\n_j K a^{kl}\n_{i}a_{kl}-Ka^{kl}\n_{j}\n_{i}a_{kl}+Ka^{kp}\n_{j} a_{pq} a^{ql}\n_{i}a_{kl}
    \\&=&  -Ka^{kl} (\n_{l}\n_{k}a_{ij}+a_{kl}\d_{ij}-a_{ij}\d_{kl}+a_{il}\d_{kj}-a_{jk}\d_{il}) 
 \\&& +Ka^{kp} \n_{j}a_{pq} a^{ql}\n_{i}a_{kl}+ Ka^{pq}\n_{j}a_{pq}a^{kl}\n_{i}a_{kl} 
\end{eqnarray*}
and
\begin{eqnarray*}
    \Delta K&=&  Ka^{kp} \n_{i}a_{pq} a^{ql}\n_{i}a_{kl}+ Ka^{pq}\n_{i}a_{pq}a^{kl}\n_{i}a_{kl} 
    \\&& -Ka^{kl}\n_{l}\n_{k}\Phi-n^2 K+\Phi  H K.
\end{eqnarray*}
Altogether yields
\begin{eqnarray*}
    \p_t\Phi &=&   \ell Ka^{kl}\n_{l}\n_{k}\Phi+ n^2 \ell -\ell K \Phi H -n K %ell(n^2 K-\Phi HK) 
   -2 \n \ell\cdot \n K  \\&&  -\ell K a^{kp} \n_{i}a_{pq} a^{ql}\n_{i}a_{kl}- \ell  K a^{pq}\n_{i}a_{pq}a^{kl}\n_{i}a_{kl}.
\end{eqnarray*}
From \eqref{boundary of Phi}, we  have that $\Phi$ attains its maximum value  at some interior point, say $\xi_0 \in \C_{\theta} \setminus \partial \C_{\theta}$. Assume that $a_{ij}$ is diagonal at this point, which in turn implies that $a^{ij}$ is also diagonal at $\xi_0$. Then at $\xi_0$, we find
\begin{eqnarray} \notag
    0 &\leq &\L \Phi=\p_t\Phi- \ell Ka^{kl}\n_{k}\n_{l}\Phi   \notag
    \\&=& (n\ell-1) nK - \ell K \Phi H 
    \notag \\&&  -  \ell K a^{kp} \n_{i}a_{pq} a^{ql}\n_{i}a_{kl} - \ell K a^{pq}\n_{i}a_{pq}a^{kl}\n_{i}a_{kl}  -2 \n\ell \cdot \n K.\label{evo of phi}
\end{eqnarray}
From \eqref{eq_add1} we have 
\begin{eqnarray*}
   -2  \n \ell\cdot \n K=2\cos\theta \<e_i,e\> K a^{kk}\n_{i}a_{kk}.
\end{eqnarray*}
By using the Cauchy-Schwarz inequality and the convexity of $\S_t$,  we obtain
\begin{eqnarray}
  -\ell \sum_{k,l,i} a^{kk}a^{ll}(\n_{i}a_{kl})^2-  \ell  \sum_i (\sum_k a^{kk}\n_{i}a_{kk} )^2-2\cos\theta \<e_i,e\> a^{kk} \n_{i}a_{kk} \leq  C  ,~~~~~~\label{Kalpha upper}
\end{eqnarray} 
where the constant $C$ only depends on $n$.
  Inserting \eqref{Kalpha upper} into  \eqref{evo of phi}, and using the fact that $H\geq nK^{1/n}$ , we derive that
\begin{eqnarray*}
     \ell K^{\frac{1}{n}} \Phi\leq \frac{1}{n} \ell H\Phi \le ( n \ell -1)+C,
\end{eqnarray*}
which, together with  Proposition \ref{lower bound of K}, gives 
\begin{eqnarray*}
  \Phi \leq C_{0},
\end{eqnarray*}for some positive constant $C_{0}$ depending on $n$ and $\S_{0}$. It implies  \begin{eqnarray}\label{kappa-lower-bound}
    \k_i\geq C^{-1}_{0}.
\end{eqnarray}
The upper bound of $\k_i$ follows directly from \eqref{kappa-lower-bound} and Proposition \ref{upper bound of K}, since
\begin{eqnarray*}
    \k_i = K \prod_{j\neq i} \k_j^{-1} \leq C C_0^{-(n-1)} \coloneqq C_1.
\end{eqnarray*}
This completes the proof.
\end{proof}

 Now, we are ready to follow the approach presented by Tso in \cite[Section 4]{Tso85} (see also \cite[Chapter 15]{ACGL} or \cite[Section 4]{GHL}) to complete the proof of Theorem \ref{main-existence}. 
\subsection {Proof of Theorem \ref{main-existence}}
\begin{proof} 
Let  $\widehat{\S}_t$ denote the convex domain enclosed in $\ov{\RR^{n+1}_+}$ by $\S_t$ and $\p\ov{\RR^{n+1}_+}$. Since $\S_t$ is the solution of flow \eqref{c-Gauss curvature flow}, 
by the comparison principle   we know that $\widehat{\S}_t\subset \widehat{\S}_s$, for any $s<t$. 
 
First, we demonstrate that the capillary inner radius $\rho_{-}(\wh\S_t ,\theta)$ of $\S_t$ tends to zero as $t\rightarrow T^{\ast}$. In fact, if this is not true, since the flow is a contracting flow and by the definition of $\rho_-(\wh\S_t,\theta)$, we know that there exists a spherical cap with some radius $r_{0}>0$ which is contained in $\Sigma_{t}$ for $ t\in [0, T^{\ast})$. Combining Proposition \ref{curvature est} and
  the standard parabolic equation theory (see, e.g., \cite[Theorem 6.1, Theorem 6.4 and Theorem 6.5]{Dong}, and also \cite[Theorem 14.23]{Lieberman-book-parabolic}), we obtain a uniform $C^{k,\alpha}$ estimate of the solution to flow \eqref{support fun eq. of capillary alpha-GCF} for all $k\geq 2$ and $\alpha\in (0,1)$. Therefore, flow \eqref{support fun eq. of capillary alpha-GCF} can be extended up to time $T^*+\varepsilon$ for some $\varepsilon>0$, which contradicts the fact that $T^*<+\infty$ is the maximal existence time of flow \eqref{support fun eq. of capillary alpha-GCF}.

Next, we prove that as $t\rightarrow T^{\ast}$, the hypersurface $\Sigma_{t}$ converges to a point. Suppose, by contradiction, that $\cap_{t\geq 0}\widehat{\Sigma}_{t}$ is not a point. Let $\text{diam}(\cap_{t\geq 0}\widehat{\Sigma}_{t})=l_{0}=\text{dist}(x_{1}, x_{2})$  where $x_{1}, x_{2}\in \cap_{t\geq 0}\widehat{\Sigma}_{t}$. 
Since the inner capillary radius tends to zero, the volume of $\widehat{\Sigma}_{t}$  also converges to zero. Thus, there exists a two dimension hyperplane $\mathcal{H}$ such that the area of $\bar{H}\coloneqq \mathcal{H}\cap \left(\cap_{t\geq 0}\widehat{\Sigma}_{t}\right)$  converges to zero.  Along the boundary of $\bar{H}\cap \partial (\cap_{t\geq 0}\widehat{\Sigma}_{t})$, there must be some point with arbitrarily small principal curvature. However, Proposition \ref{curvature est} implies the principal curvatures of $\Sigma_{t}$  have a uniformly lower bound. Thus, we derive a contradiction, and  $\widehat{\Sigma}_{t}$ converges to a point as $t\rightarrow T^{\ast}$.
\end{proof}

\section{Normalized capillary Gauss curvature flow}\label{sec4}
The primary objective of this section is to show that the normalized capillary Gauss curvature flow \eqref{GCF-capillary-normalized} below will converge to a soliton. Specifically, we establish the uniform estimates for the solution of the normalized flow and analyze its asymptotic behavior.  

\subsection{Normalized capillary Gauss curvature flow} 
Suppose that $\wt X\colon M\times[0,T^{\ast})\to \ol{\RR^{n+1}_+}$ satisfies flow \eqref{c-Gauss curvature flow}, that is, \begin{eqnarray}\label{GCF-capillary-2}
\left\{
\begin{array}{llll}
\p_\tau \wt X(\cdot, \tau) &=&-\wt K(\cdot, \tau)  \tilde\nu(\cdot,\tau) \quad &
\hbox{ in } M\times [0,T^*),\\
\<\tilde{\nu} ,e \> &=&0
\quad  & \hbox{ on }\partial M\times [0,T^*),\\
X(\cdot,0)&=& X_0(\cdot) & \hbox{ on } M.
\end{array} \right.
\end{eqnarray}
From Theorem \ref{main-existence}, we have proved that $\wt \S_\tau\coloneqq \wt X(M,\tau)$ shrinks to some point $p\in \p \ov{\RR^{n+1}_+}$ as $\tau \to T^*$. Through a horizontal translation, we can assume that the limiting point $p$ is the origin. To investigate the limiting shape of flow \eqref{GCF-capillary-2}, we consider a normalized flow of \eqref{GCF-capillary-2} by ensuring the enclosed domain of $\wh{\wt\S}_0$ has the same volume as $\wh{\C_\theta}$ (see  Proposition \ref{volume-preserving} below). More specifically, we define a  scaling of $\wt \S_\tau$ with respect to the limiting point $p\in \p \ov{\RR^{n+1}_+}$ by
\begin{eqnarray}\label{scaling of X}
    X(x, t)\coloneqq e^t \wt X(x, \tau), ~~~x\in M,
\end{eqnarray}where $ t\coloneqq \frac 1 {n+1} \log \left(\frac{{\rm{Vol}}(\wh{\C_\theta})}{{\rm{Vol}} (\wh {\wt \S}_\tau)}\right)$ and $\tau\in[0, T^*)$.  In view of Theorem \ref{main-existence}, it is obvious that $X(M,t)$ is well-defined for all $t\in[0, +\infty)$. 

By a direct computation, $X(x,t)$ satisfies
\begin{eqnarray*}
    \p_t X(x,t)&= & X(x,t)+e^t \p_\tau \wt X(x,\tau) \frac{\p \tau}{\p t}
    \\&=&X(x,t)-e^t  \frac{(n+1){\rm{Vol}}(\wh{\wt \S}_\tau)}{\int_{\widetilde{\S}_{\tau} }\wt K(\cdot, \tau) (1+\cos\theta\<\nu,e\>) d\mu_\tau} \wt K \tilde \nu
    \\&=& X(x,t) - K(x,t)\tilde\nu(x,t).
\end{eqnarray*}
where $d\mu_{\tau}$ is the area element of $\widetilde{\S}_{\tau}$.

In summary, after the scaling, $X(x, t)\colon  M\times[0,\infty)\to \ol{\RR^{n+1}_+}$ given by \eqref{scaling of X} satisfies 
\begin{eqnarray}\label{GCF-capillary-normalized}
\left\{
\begin{array}{llll}
\p_t X(\cdot, t) &=&X(\cdot,t)- K(\cdot, t)\tilde\nu(\cdot,t)  \quad &
\hbox{ in } M\times [0,+\infty),\\
\<\tilde\nu ,e\> &=&0
\quad  & \hbox{ on }\partial M\times [0,+\infty),\\
X(\cdot,0)&=& X_0(\cdot) & \hbox{ on } M.
\end{array} \right.
\end{eqnarray}

\begin{prop}\label{volume-preserving}    
If ${\rm{Vol}}(\wh{\S}_0)={\rm{Vol}}({\wh{\C_\theta}})$, then along flow \eqref{GCF-capillary-normalized}, the volume of $\wh\S_t $ is preserved for all $t\geq 0$, 
\end{prop}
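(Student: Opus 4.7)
The plan is to derive an autonomous linear ODE for $V(t)\coloneqq{\rm{Vol}}(\wh\S_t)$ along the normalized flow \eqref{GCF-capillary-normalized}; the hypothesis will then force $V$ to be the unique constant solution.

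I will begin with the first variation of volume. In the capillary setting, the flat portion $\wh{\p\S_t}\subset\p\ov{\RR^{n+1}_+}$ cannot move in its outward normal direction $e$, so Reynolds' transport theorem reduces to $\frac{d}{dt}V(t)=\int_{\S_t}\<\p_t X,\nu\>d\mu_t$. Substituting $\p_tX=X-K\tilde\nu$ and using $\<\tilde\nu,\nu\>=1+\cos\theta\<e,\nu\>=\ell$ splits this into
\[
\frac{d}{dt}V(t)=\int_{\S_t}\<X,\nu\>\,d\mu_t-\int_{\S_t}K\ell\, d\mu_t.
\]

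Next, I will evaluate each piece geometrically. The divergence theorem applied to the vector field $x\mapsto x$ on $\wh\S_t$, together with $\<x,e\>=-x_{n+1}\equiv 0$ on the flat boundary $\wh{\p\S_t}$, identifies the first integral with $(n+1)V(t)$. For the second, parameterizing $\S_t$ by the inverse capillary Gauss map \eqref{capillary-Gauss-map-para} and using $\det(\n^2 h+h\sigma)=K^{-1}$ gives $d\mu_t=K^{-1}d\sigma$, so
\[
\int_{\S_t}K\ell\, d\mu_t=\int_{\C_\theta}\ell\, d\sigma.
\]
A short computation on $\C_\theta$ (where $X=\xi$ and hence $\<X,\nu\>=\ell$) shows that $\ell$ is exactly the support function of $\C_\theta$ based at the origin, so applying the same divergence identity to $\wh{\C_\theta}$ itself yields $\int_{\C_\theta}\ell\,d\sigma=(n+1){\rm{Vol}}(\wh{\C_\theta})$; this is consistent with the value of $T^\ast$ appearing in Theorem \ref{main-existence}.

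Combining the two evaluations produces the autonomous ODE
\[
\frac{d}{dt}\left(V(t)-{\rm{Vol}}(\wh{\C_\theta})\right)=(n+1)\left(V(t)-{\rm{Vol}}(\wh{\C_\theta})\right),
\]
whose unique solution is $V(t)-{\rm{Vol}}(\wh{\C_\theta})=e^{(n+1)t}\left(V(0)-{\rm{Vol}}(\wh{\C_\theta})\right)$. The hypothesis $V(0)={\rm{Vol}}(\wh{\C_\theta})$ makes the right-hand side vanish, giving $V(t)\equiv{\rm{Vol}}(\wh{\C_\theta})$. No step is genuinely hard; the only point meriting care is the first-variation formula, where one must check that the contribution from the flat part $\wh{\p\S_t}$ is zero—this is immediate from the capillary constraint, which confines all motion of $\p\S_t$ to the hyperplane $\p\ov{\RR^{n+1}_+}$.
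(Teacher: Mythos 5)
Your proposal is correct and follows essentially the same route as the paper: the paper likewise computes $\frac{d}{dt}{\rm Vol}(\wh\S_t)=\int_{\S_t}h\,d\mu_t-\int_{\S_t}K\ell\,d\mu_t=(n+1)\bigl({\rm Vol}(\wh\S_t)-{\rm Vol}(\wh{\C_\theta})\bigr)$ (citing a general first-variation formula rather than rederiving it via the divergence theorem and the Gauss-map change of variables as you do) and then integrates the same linear ODE. The extra details you supply — the vanishing flat-boundary contribution, $\int_{\S_t}\<X,\nu\>d\mu_t=(n+1){\rm Vol}(\wh\S_t)$, and $\int_{\C_\theta}\ell\,d\sigma=(n+1){\rm Vol}(\wh{\C_\theta})$ — are all accurate and match identities used elsewhere in the paper.
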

\begin{proof}
    From \cite[Theorem 1.1 for $k=0$]{WWX22},   we have
    \begin{eqnarray*}
        \frac{d}{dt}{\rm{Vol}}(\widehat {\S}_{t})&=&\int_{\S_{t}}hd\mu_{t} - \int_{\S_t} K(\cdot,t ) (1+\cos\theta\<\nu, e\>)d\mu_{t}\\
        &=&(n+1) \left( {\rm{Vol}}(\wh\S_t )-{\rm{Vol}}(\widehat{\C_{\theta}})\right),
    \end{eqnarray*}
where $d\mu_{t}$ is the area element of $\S_{t}$. Together with the assumption that ${\rm{Vol}}(\wh\S_0 )={\rm{Vol}}(\widehat{\C_{\theta}})$, we derive that  ${\rm{Vol}}(\wh\S_t)={\rm{Vol}}(\wh{\C_\theta})$  for all $t\geq 0$. Therefore, the conclusion follows.
   \end{proof} 

 Similarly, as in the previous section, the normalized capillary Gauss curvature flow \eqref{GCF-capillary-normalized} can be reformulated as a scalar parabolic flow of Monge–Amp\`ere type equation with Robin boundary value condition to the function $h(\xi,t)$, which is the support function of $\S_t\coloneq X(M,t)$ with respect to the origin.
 \begin{prop}
     Let $\S_t\coloneqq X(M,t)$ be the solution of flow \eqref{GCF-capillary-normalized}. Then, $h$ satisfies
     \begin{eqnarray}\label{Support-Normalized} \left\{
\begin{array}{llll}
  \p_t h(\xi, t)  &=&h(\xi, t)-\ell (\xi)\left[\det(\n^2h+hI)\right]^{-1}(\xi, t) & \text{ in }~ \C_\theta\times[0,+\infty),\\ 
	\n_\mu h&=& \cot\theta h & \text{ on } ~\p \C_\theta\times[0, +\infty), \\
 h(\cdot,0) &=& h_0(\cdot)  & \text{ on } ~ \C_\theta, \end{array}  \right.
\end{eqnarray} where  $\mu$ is the unit outward normal of $\p \C_\theta\subset  \C_\theta$ and $h_0$ is the support function of $\S_0$.
 \end{prop}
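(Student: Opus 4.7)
The plan is to mirror the derivation of Proposition \ref{prop-scalar equ} line by line, with the only new ingredient being the additional drift term $X$ in the velocity field of the normalized flow \eqref{GCF-capillary-normalized}. Since $\{\S_t\}_{t\ge 0}$ remains a smooth family of strictly convex capillary hypersurfaces (inherited from the unnormalized flow via the scaling \eqref{scaling of X}, which fixes $\p\R^{n+1}_+$ and preserves angles), the support function $h(\xi,t)$ with respect to the origin is smooth on $\C_\theta\times[0,+\infty)$, and $\S_t$ can be parametrized by the inverse capillary Gauss map as in \eqref{capillary-Gauss-map-para}.

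For the interior equation, I would differentiate $h(\xi,t)=\<X(\cdot,t),\nu\>$ in $t$ with $\xi$ (equivalently $\nu=\xi-\cos\theta e$) held fixed, obtaining $\p_t h=\<\p_t X,\nu\>$. Substituting $\p_t X=X-K\tilde\nu$ from \eqref{GCF-capillary-normalized} and using the two identities $\<X,\nu\>=h$ and $\<\tilde\nu,\nu\>=1+\cos\theta\<\nu,e\>=\sin^2\theta+\cos\theta\<\xi,e\>=\ell(\xi)$ (the latter already appears in the proof of Proposition \ref{prop-scalar equ}) yields $\p_t h=h-K\ell(\xi)$. Inserting the standard formula $K=[\det(\n^2 h+h\sigma)]^{-1}$ for the Gauss curvature of a strictly convex hypersurface expressed via the Hessian of its support function on the Gauss image then produces the displayed interior parabolic Monge--Amp\`ere equation in \eqref{Support-Normalized}.

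For the Robin boundary condition no new work is required: $\n_\mu h=\cot\theta h$ on $\p\C_\theta$ is simply a reformulation of the contact angle condition between $\S_t$ and $\p\ov{\R^{n+1}_+}$, which is a purely geometric property preserved along the normalized flow. The computation in the proof of Proposition \ref{prop-scalar equ} --- using the decomposition \eqref{co-normal bundle} to derive $\<\mu,X\>=\cot\theta h$ along $\p\C_\theta$ and identifying this quantity with $\n_\mu h$ via the parametrization \eqref{capillary-Gauss-map-para} --- applies verbatim at each time $t\ge 0$. The initial condition $h(\cdot,0)=h_0(\cdot)$ is immediate from the definition of the support function.

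The hard part, to the extent there is one, is merely bookkeeping: one must verify that $t$-differentiation and the inverse capillary Gauss map parametrization are compatible under the rescaling \eqref{scaling of X}, which was already handled in the paragraph preceding \eqref{GCF-capillary-normalized} when deriving the normalized flow from \eqref{GCF-capillary-2}. Thus the whole argument boils down to the one-line computation $\p_t h=h-K\ell$ in the interior together with the capillary identity recalled above on $\p\C_\theta$.
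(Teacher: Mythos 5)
Your proposal is correct and follows exactly the route the paper intends: the paper omits this proof, stating only that it is "similar to Proposition \ref{prop-scalar equ}", and your argument is precisely that derivation with the extra drift term, namely $\p_t h=\<\p_t X,\nu\>=\<X,\nu\>-K\<\tilde\nu,\nu\>=h-K\ell$ together with $K=[\det(\n^2h+h\sigma)]^{-1}$, while the Robin condition and the initial condition carry over verbatim since they are purely geometric.
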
 
  
\begin{proof}
    The proof is similar to Proposition \ref{prop-scalar equ}, thus we omit it here for the conciseness.
\end{proof}

Equivalently, from \eqref{Support-Normalized}, the capillary support function $u(\xi,t)$ of $\S_t$ (with respect to the origin) satisfies
\begin{eqnarray}\label{eq-scalar-capillary-support-normalized-GCF} \left\{
\begin{array}{llll}
  \p_t u&=&u-\frac 1 {\det \left(\n^2 (\ell u)+\ell u\s \right)}   & \text{ in } ~\C_\theta\times[0,+\infty),\\ 
	\n_\mu u&=& 0 & \text{ on }~ \p \C_\theta \times[0, +\infty), \\
 u(\cdot,0) &=& u_0(\cdot)  & \text{ on } ~\C_\theta, \end{array}  \right.
\end{eqnarray}
where $u_0$ is the capillary support function of $\S_{0}$.

Next, we investigate the asymptotic behavior of flow \eqref{GCF-capillary-normalized} and prove the main result in this paper, the second part of  Theorem \ref{main-thm}. For readers' convenience, we restate it in the following.

\begin{thm}\label{GCF-normal to soliton}
Let $\S_0$ be a strictly convex capillary hypersurface in $\ol{\R^{n+1}_+}$ and $\theta\in(0,\pi/2)$, then flow \eqref{GCF-capillary-normalized} has a smooth solution $\S_t\coloneqq X(\cdot,t)$, which is strictly convex capillary hypersurface for all $t\in[0,+\infty)$. Moreover,  the solution of flow \eqref{GCF-capillary-normalized} converges to a smooth, strictly convex capillary hypersurface $\S_\infty\subset \ol{\RR^{n+1}_+}$ as  $t\to +\infty$. %This $\Sigma_{\infty}$ is a self-similar solution (soliton) to flow \eqref{GCF-capillary-normalized}. 
Furthermore, $\S_\infty$ satisfies 
\begin{eqnarray}\label{eq-soliton-normalized-GCF}
\left\{
\begin{array}{llll}
	K &=& \frac{\<X,\nu\> } {1+\cos\theta\<\nu, e\>} & \text{ in } \S_\infty,\\ 
	\<\nu,e\>&=& -\cos\theta & \text{ on } \p \S_\infty.\end{array}
\right.
\end{eqnarray}
\end{thm}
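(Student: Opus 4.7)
The plan is to reformulate the problem at the level of the scalar equation \eqref{eq-scalar-capillary-support-normalized-GCF} for the capillary support function $u(\xi,t)$ and then prove uniform a priori estimates of every order, sufficient to invoke standard parabolic theory for long-time existence and to extract a limit. The normalization is engineered so that Proposition~\ref{volume-preserving} keeps ${\rm Vol}(\widehat{\Sigma}_t)\equiv {\rm Vol}(\widehat{\C_\theta})$, which gives us one conserved quantity for free.

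The first and most delicate step is the $C^0$ bound: showing that $u(\xi,t)$ is uniformly bounded from above and below by positive constants. For the upper bound I would use the monotonicity of the capillary entropy $\mathcal{E}_\theta(\widehat{\Sigma}_t)$ along the normalized flow (Proposition~\ref{prop-mono}) together with the estimate $\rho_+(\widehat{\Sigma},\theta)\le Ce^{\mathcal{E}_\theta(\widehat{\Sigma})}$. For the lower bound, I would combine the companion estimate $\rho_-(\widehat{\Sigma},\theta)\ge C^{-1}{\rm Vol}(\widehat{\Sigma})e^{-n\mathcal{E}_\theta(\widehat{\Sigma})}$ with the conserved volume and Remark~\ref{rem-cap-inner-radius}, choosing the reference point $z_0$ to be the capillary Santal\'o point supplied by Proposition~\ref{pro-santalo}; the Blaschke--Santal\'o type bound (Proposition~\ref{prop-santalo-ineq}) is exactly what prevents $u_{z_0}$ from degenerating. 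This is the main obstacle, as emphasized in the introduction: it fails for general anisotropic Gauss curvature flows, so the capillary entropy framework of Section~\ref{sec2} is essential.

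Once $c\le u\le C$ is in hand, I would derive two-sided bounds on the Gauss curvature. For the upper bound, adapt Proposition~\ref{upper bound of K}: test function $\varphi=K/(u-c_0)$ with $c_0=\tfrac12\inf u$, whose homogeneous Neumann condition $\nabla_\mu\varphi=0$ on $\partial\C_\theta$ (Eq.~\eqref{neumann-tso-fun}) transfers verbatim to the normalized flow up to harmless linear terms, so a maximum principle yields $K\le C$. For the lower bound, use the announced new test function
\begin{equation*}
P\coloneqq\log(Ku^\gamma),\qquad \gamma\gg1,
\end{equation*}
whose Neumann condition along $\partial\C_\theta$ is again crucial; the drift/zero-order terms involving $\gamma$ can be absorbed using the $C^0$ bounds on $u$. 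Then the argument of Proposition~\ref{curvature est}, based on $\Phi=\Delta h+nh$ and the Neumann-type inequality $\nabla_\mu\Phi\le 0$ on $\partial\C_\theta$ (valid for $\theta<\pi/2$), delivers the upper bound on the principal radii, i.e.\ a lower bound on the principal curvatures. Combined with $K\le C$, this yields the full $C^2$ estimate and uniform parabolicity of \eqref{eq-scalar-capillary-support-normalized-GCF}. Krylov--Safonov plus Schauder then give uniform $C^{k,\alpha}$ estimates for every $k$, and hence long-time existence.

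For the convergence to a soliton, I would exploit the strict monotonicity in Proposition~\ref{prop-mono}:
\begin{equation*}
\frac{d}{dt}\mathcal{E}_\theta(\widehat{\Sigma}_t)\le 0,
\end{equation*}
with equality only on solitons of \eqref{soliton0}. Since $\mathcal{E}_\theta$ is bounded below (via the Blaschke--Santal\'o inequality and the uniform $C^0$ bound), integrating in time forces the dissipation integrand to be in $L^1(0,\infty)$, so along a subsequence $t_k\to\infty$ the dissipation at time $t_k$ tends to zero. By the uniform $C^{k,\alpha}$ bounds we can extract a smooth subsequential limit $\Sigma_\infty$ that is a strictly convex capillary hypersurface satisfying \eqref{eq-soliton-normalized-GCF}. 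To promote this to full convergence (rather than only along a subsequence), I would use the monotone and convex behavior of $\mathcal{E}_\theta$ together with the uniqueness of the limit along the entropy-minimizing trajectory, which is standard once the compactness and the strict monotonicity are established. This completes the proof of Theorem~\ref{GCF-normal to soliton}.
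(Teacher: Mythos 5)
Your overall architecture (entropy monotonicity $\Rightarrow$ $C^0$ bounds $\Rightarrow$ two-sided Gauss curvature bounds via $\varphi=K/(u-c_0)$ and $P=\log(Ku^\gamma)$ $\Rightarrow$ $C^2$ bounds via $\Phi=\Delta h+nh$ $\Rightarrow$ higher regularity and subsequential convergence to a soliton) matches the paper. But there is a genuine gap in the step you yourself flag as the most delicate one, the positive lower bound for $u$.

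The flow \eqref{eq-scalar-capillary-support-normalized-GCF} is written for the capillary support function $u$ with respect to the \emph{fixed origin} (the blow-up point $p$ of the unnormalized flow, translated to $0$), and it is this $u$ that must be bounded below: it is the quantity appearing in $\partial_t u=u-K$, in the test functions $\varphi$ and $P$, and in the homogeneous Neumann conditions on $\partial\C_\theta$. The combination you propose --- $\rho_-(\widehat\Sigma_t,\theta)\ge C^{-1}{\rm Vol}(\widehat\Sigma_t)e^{-n\mathcal E_\theta(\widehat\Sigma_t)}$, volume preservation, entropy monotonicity, and Remark~\ref{rem-cap-inner-radius} --- only yields $u_{z(t)}\ge\varepsilon_0$ for \emph{some time-dependent} point $z(t)\in{\rm int}(\widehat{\partial\Sigma_t})$ (Santal\'o or entropy point). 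It does not exclude the scenario in which $\widehat\Sigma_t$ keeps a fat inscribed cap but drifts so that the origin approaches $\Sigma_t$, making $\inf_{\C_\theta}u(\cdot,t)\to0$. Replacing the origin by the moving point $z(t)$ is not harmless either: the evolution equation for $u_{z(t)}$ acquires an uncontrolled $\dot z(t)$ term, and the curvature estimates no longer close. This is exactly the difficulty the paper spends Proposition~\ref{pro-limit}, the stability estimate \eqref{ineq-stability-estimate}, and the entire Appendix (Theorem~\ref{thm-lim-Hausdorff-distance} on $\theta$-capillary convex bodies under Hausdorff limits) to resolve: one shows that the entropy gap $\mathcal E_\theta(\widehat\Sigma_t)-\mathcal E_\theta(\widehat\Sigma_t,0)\to0$, hence $z_e(\widehat\Sigma_t)\to0$ by the quantitative concavity of the entropy, and then a compactness argument shows ${\rm dist}(z_e(\widehat\Sigma_t),\Sigma_t)\ge\delta$ uniformly, so the origin stays in the deep interior. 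Without some substitute for this argument, your $C^0$ lower bound --- and with it everything downstream --- does not follow.

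A secondary, smaller point: your final upgrade from subsequential to full convergence invokes ``uniqueness of the limit along the entropy-minimizing trajectory,'' which is not established (uniqueness of solitons is precisely the open Conjecture~1.2). The paper instead argues by contradiction using the uniform $C^{k,\alpha}$ bounds and the fact that every subsequential limit has the same entropy value $\mathcal E_\infty$ and zero dissipation; you would need to make this step explicit rather than appeal to soliton uniqueness.
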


\ 

The proof of Theorem~\ref{GCF-normal to soliton} is inspired by  \cite{GN} and \cite{AGN}. A key step is the introduction of a suitable geometric monotone quantity along flow~\eqref{GCF-capillary-normalized}, the \textit{capillary entropy functional} $\mathcal{E}_{\theta}(\wh\S)$, which enables us to establish a positive lower bound for the capillary support function. Before proceeding with the proof of Theorem~\ref{GCF-normal to soliton}, we first introduce the notion of capillary entropy and establish several useful properties in the following subsection.

\subsection{Capillary entropy and related properties}\label{sec-4.2}
In this subsection, we introduce the capillary entropy functional for capillary convex bodies. This functional plays an important role in investigating the asymptotic shape of the normalized capillary Gauss curvature flow \eqref{GCF-capillary-normalized}.  The concept of this entropy is inspired by the one defined for closed convex hypersurfaces, as presented in the papers of Guan-Ni \cite{GN} and Firey \cite{Firey}.
 
\begin{defn}[Capillary entropy functional]\label{def-cap-entropy}
Let $\wh\S\in \K_\theta$ be a capillary convex body. The capillary entropy functional $\E_{\theta}$ associated with $\wh\S$ is defined by 
\begin{eqnarray*}
    \mathcal{E}_{\theta}(\wh\S)\coloneqq \sup_{z\in {\rm{int}}(\widehat{\partial\S})}    \E_{\theta}(\wh\S,z),
\end{eqnarray*} where
\begin{eqnarray*}
    \E_{\theta}(\wh\S,z)\coloneqq 
        \frac 1 {\omega_\theta}\int_{\C_\theta} \left(\log u_{z}(\xi)\right) \ell(\xi)  d\s,
\end{eqnarray*}
and  $\omega_\theta\coloneqq |\C_\theta|-\cos\theta |\wh{\p\C_\theta}|$ is the capillary area of $\C_\theta$. 
 
\end{defn}

In the following, we present some properties of the capillary entropy functional. Similar properties for closed convex hypersurfaces, one can refer to \cite[Section~2]{GN}, \cite[Section~2]{AGN}, and \cite[Section 2, 3]{BG}.
The underlying ideas of the proofs of some propositions are essentially the same as those in \cite{GN}. Nevertheless, for the sake of completeness, we provide the full arguments here, including the necessary modifications to account for the capillary boundary. We also remark that these results are of independent interest and may be useful for other related problems for capillary hypersurfaces.

To begin with, by using the capillary  Blaschke-Santal\'o inequality established in Proposition \ref{prop-santalo-ineq}, we show that the capillary entropy functional $\E_{\theta}(\widehat{\S})$ is bounded from below. In particular, if Conjecture \ref{conj-sharp-capillary-B-S-ineq}  holds, this would imply that the capillary entropy functional is non-negative. Nevertheless, \eqref{lower bdd} is sufficient for our purposes later on.

\begin{prop}\label{lem-low-entroy}
 Let $\wh\S\in \K_\theta$ with  $\theta\in(0, \pi/2)$ and  ${\rm{Vol}}(\widehat \S) ={\rm{Vol}}(\widehat{\C_{\theta}})$. If $z_s \in \widehat{\partial \S} $
    is the capillary Santal\'o point of $\widehat{\S}$, 
   then
    \begin{eqnarray}\label{lower bdd}
        \frac 1 {\omega_\theta}\int_{\C_\theta} \left(\log u_{z_s}(\xi)\right) \ell(\xi) d\s \ge -C,
        \end{eqnarray}
   where the constant $C > 0$ only depends on $n$ and $\theta$. 
\end{prop}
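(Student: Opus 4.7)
The plan is to combine Jensen's inequality with the capillary Blaschke-Santal\'o inequality (Proposition \ref{prop-santalo-ineq}) and the explicit polar-volume formula (Proposition \ref{prop-volume-polar-body}). The normalized measure $\frac{\ell(\xi) d\sigma}{\omega_\theta}$ on $\C_\theta$ has total mass one, and $\log$ is concave, so I would apply Jensen to the function $f \coloneqq u_{z_s}^{-(n+1)}$ to obtain
\begin{equation*}
-\frac{n+1}{\omega_\theta}\int_{\C_\theta}\bigl(\log u_{z_s}(\xi)\bigr)\,\ell(\xi)\,d\sigma \le \log\!\left(\frac{1}{\omega_\theta}\int_{\C_\theta}\frac{\ell(\xi)}{u_{z_s}^{n+1}(\xi)}\,d\sigma\right).
\end{equation*}

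Next I would recognize the right-hand integral via Proposition \ref{prop-volume-polar-body} as $(n+1)\,\mathrm{Vol}(\widehat{\Sigma}^{\ast}_{z_s})/\omega_\theta$, which rearranges to
\begin{equation*}
\frac{1}{\omega_\theta}\int_{\C_\theta}\bigl(\log u_{z_s}(\xi)\bigr)\,\ell(\xi)\,d\sigma \ge -\frac{1}{n+1}\log\!\left(\frac{(n+1)\,\mathrm{Vol}(\widehat{\Sigma}^{\ast}_{z_s})}{\omega_\theta}\right).
\end{equation*}
Since $z_s$ is the capillary Santal\'o point, it realizes $\inf_{z}\mathrm{Vol}(\widehat{\Sigma}^{\ast}_{z})$, so Proposition \ref{prop-santalo-ineq} yields
\begin{equation*}
\mathrm{Vol}(\widehat{\Sigma})\,\mathrm{Vol}(\widehat{\Sigma}^{\ast}_{z_s}) \le \tfrac{1}{2}\,\mathrm{Vol}(\mathbb{B}^{n+1})^{2}.
\end{equation*}
Using the normalization $\mathrm{Vol}(\widehat{\Sigma})=\mathrm{Vol}(\widehat{\C_\theta})$, this controls $\mathrm{Vol}(\widehat{\Sigma}^{\ast}_{z_s})$ from above by a quantity depending only on $n$ and $\theta$, and substituting back into the previous display gives the desired lower bound \eqref{lower bdd} with $C = \frac{1}{n+1}\log\bigl(\frac{(n+1)\mathrm{Vol}(\mathbb{B}^{n+1})^{2}}{2\,\omega_\theta\,\mathrm{Vol}(\widehat{\C_\theta})}\bigr)$.

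There is no real obstacle in this argument, because the preceding subsection was carefully designed so that the capillary polar body, the Santal\'o point, and the Blaschke-Santal\'o inequality all fit together with the entropy functional through the formula \eqref{vol}. The only minor point to verify is that Jensen's inequality applies, which is immediate since $u_{z_s}>0$ on $\C_\theta$ (because $z_s$ lies in the interior of $\widehat{\partial\Sigma}$ by Proposition \ref{pro-santalo}) and $\ell>0$ on $\C_\theta$ for $\theta\in(0,\pi/2)$, so $f=u_{z_s}^{-(n+1)}$ is a bounded positive integrable function against the probability measure $\ell\,d\sigma/\omega_\theta$.
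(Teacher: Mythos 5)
Your proposal is correct and follows essentially the same route as the paper's proof: Jensen's inequality with respect to the probability measure $\ell\,d\sigma/\omega_\theta$ applied to $u_{z_s}^{-(n+1)}$, identification of the resulting integral as $(n+1)\,\mathrm{Vol}(\widehat{\S}^{\ast}_{z_s})/\omega_\theta$ via Proposition \ref{prop-volume-polar-body}, and then the capillary Blaschke--Santal\'o inequality together with $\omega_\theta=(n+1)\mathrm{Vol}(\widehat{\C_\theta})$ and the volume normalization. Your explicit constant agrees with the one the paper's argument produces.
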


\begin{proof}
    The proof follows closely the idea of Guan-Ni \cite[Proposition~1.1]{GN}, by adjusting to use Proposition \ref{prop-volume-polar-body} and Proposition \ref{prop-santalo-ineq}. 

    From  Proposition \ref{prop-volume-polar-body}, 
\begin{eqnarray*}
{\rm{Vol}}(\widehat{\S}^*_{z_s}) 
&=&\frac{1}{n+1}\int_{\C_{\theta}}\frac{\ell(\xi)}{u^{n+1}_{z_{s}}(\xi)} d\s.
 \end{eqnarray*}
By applying Jensen's inequality, we deduce that
\begin{eqnarray}\label{Jen-ine}
 \exp \left(   \frac{1}{\omega_{\theta}}\int_{\C_{\theta}}\log \left(\frac{1}{u^{n+1}_{z_{s}}(\xi)} \right)\ell(\xi) d\s\right)\leq \frac{1}{\omega_{\theta}}\int_{\C_{\theta}}\frac{1}{u^{n+1}_{z_{s}}(\xi)}\ell(\xi) d\s=\frac{n+1}{\o_\theta} {\rm{Vol}}(\widehat{\S}_{z_s}^{\ast}).
\end{eqnarray}
Inserting the simple fact that $\omega_{\theta}=(n+1){\rm{Vol}}(\widehat{\C_{\theta}})$ into \eqref{Jen-ine}, and together with Proposition \ref{prop-santalo-ineq},  we obtain
\begin{eqnarray*}
   \exp \left(-\frac{(n+1)}{\omega_{\theta}}\int_{\C_{\theta}} \left(\log u_{z_{s}}(\xi)\right)\ell(\xi) d\s  \right)
   \leq \frac{{\rm{Vol}}(\widehat{\S}_{z_s}^{\ast})}{{\rm{Vol}}(\wh\C_\theta)}\leq \frac{1}{2}\left(\frac{{\rm{Vol}}(\mathbb{B}^{n+1})}{{\rm{Vol}(\wh\C_{\theta})}}\right)^{2},
\end{eqnarray*}
then this implies \eqref{lower bdd}.
\end{proof}

The following proposition shows that the capillary entropy functional $\E_{\theta}(\widehat{\S})$ is attained at a point $z_e$ on the flat part of the boundary of $\widehat{\S}$, namely $\widehat{\partial\S}$. We refer to this point $z_e=z_{e}(\wh\S)$ as the \textit{capillary entropy point} of the capillary convex body $\widehat{\S}$.
We note that Guan-Ni \cite[Lemma 2.2]{GN} proved that the entropy point of a smooth, closed, convex body lies in the interior. In contrast, in our setting, the capillary entropy point lies on the flat part of the boundary of the capillary convex body $\widehat{\S}$.

\begin{prop}\label{prop-exist-cap-entropy-point}
 Let $\wh\S\in \K_\theta$ with  $\theta\in(0, \pi/2)$. Then there exists a unique point $z_{e}\in \widehat{\partial \S} $ such that $$\mathcal{E}_{\theta}(\widehat{\S})=\frac{1}{\omega_{\theta}}\int_{\C_{\theta}}
\left(\log u_{z_{e}}(\xi)\right)\ell(\xi) d\s.$$
\end{prop}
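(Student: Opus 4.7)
The strategy mirrors the proof of Proposition \ref{pro-santalo} for the capillary Santal\'o point. Treat $V(z):=\E_{\theta}(\wh\S,z)$ as a function on $\text{int}(\wh{\p\S})$. The plan is: (i) establish strict concavity of $V$, (ii) produce a candidate limit $z_{\ast}\in\wh{\p\S}$ of a maximizing sequence using compactness and an upper semicontinuity bound, and (iii) rule out $z_{\ast}\in\p(\wh{\p\S})$ by a first-variation calculation along an inward segment, in parallel with the boundary-exclusion step for the Santal\'o point. Strict concavity then yields uniqueness.

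For step (i), fix the origin in $\text{int}(\wh{\p\S})$ as a reference point; by \eqref{uz}, $u_z(\xi)=u(\xi)-\ell^{-1}(\xi)\sum_{i=1}^{n}z_i\xi_i$ is affine in $z$. Differentiating twice under the integral sign gives
\[
\frac{\p^{2}V}{\p z_i\,\p z_j}(z)=-\frac{1}{\omega_\theta}\int_{\C_\theta}\frac{\xi_i\xi_j}{\ell(\xi)\,u_z^{2}(\xi)}\,d\sigma,
\]
which is negative definite because the coordinate functions $\xi_1,\dots,\xi_n$ are linearly independent on $\C_\theta$. Hence $V$ is strictly concave. For step (ii), since $u_z\leq\text{diam}(\wh\S)$, the functional $V$ is uniformly bounded above, so $\E_\theta(\wh\S)\in\R$. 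Take a maximizing sequence $z_k\in\text{int}(\wh{\p\S})$ with $V(z_k)\to\E_\theta(\wh\S)$ and extract a limit $z_k\to z_{\ast}\in\wh{\p\S}$ by compactness. Applying Fatou's lemma to the nonnegative function $\log(\text{diam}\,\wh\S)-\log u_{z_k}$ yields $V(z_{\ast})\geq\limsup V(z_k)=\E_\theta(\wh\S)$, where $V(z_{\ast})$ is interpreted via the integral formula and is a priori possibly $-\infty$; since the right-hand side is finite, $V(z_{\ast})$ is finite and $u_{z_{\ast}}>0$ almost everywhere on $\C_\theta$.

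Step (iii) is the main obstacle. Suppose toward contradiction that $z_{\ast}\in\p(\wh{\p\S})$. After translation and rotation we may take $z_{\ast}=0$ with $E_1$ as the outward unit normal of $\wh{\p\S}\subset\p\R^{n+1}_{+}$ at $z_{\ast}$; since $\theta\in(0,\pi/2)$, the same reasoning as in Proposition \ref{pro-santalo} forces $\wh\S\subset\{x_1\leq 0\}$. The reflection $N(\xi)=(-\xi_1,\xi_2,\dots,\xi_{n+1})$ then yields $u_{z_{\ast}}(N(\xi))\geq u_{z_{\ast}}(\xi)$ for $\xi_1\geq 0$, with strict inequality on a set of positive measure, exactly as in \eqref{big}. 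Put $z_t:=-tE_1\in\text{int}(\wh{\p\S})$ for $t\in(0,t_{0})$, so that $u_{z_t}=u_{z_{\ast}}+t\ell^{-1}\xi_1$. Justifying differentiation under the integral, where the delicate point is to control contributions from the set where $u_{z_{\ast}}$ is small, using monotone convergence on $\{\xi_1>0\}$ together with dominated convergence on $\{\xi_1<0\}$, one obtains
\[
\frac{d}{dt}V(z_t)\Big|_{t=0^{+}}=\frac{1}{\omega_\theta}\int_{\C_\theta}\frac{\xi_1}{u_{z_{\ast}}(\xi)}\,d\sigma=\frac{1}{\omega_\theta}\int_{\{\xi_1>0\}}\xi_1\left(\frac{1}{u_{z_{\ast}}(\xi)}-\frac{1}{u_{z_{\ast}}(N(\xi))}\right)d\sigma>0.
\]
Hence $V(z_t)>V(z_{\ast})\geq\E_\theta(\wh\S)$ for small $t>0$, contradicting the supremum definition. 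Therefore $z_{\ast}\in\text{int}(\wh{\p\S})$, and by continuity of $V$ on the interior we have $V(z_{\ast})=\E_\theta(\wh\S)$, so $z_{e}:=z_{\ast}$ is the required capillary entropy point, unique by strict concavity.
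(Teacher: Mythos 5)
Your proposal is correct and follows essentially the same route as the paper: a maximizing sequence plus Fatou's lemma for existence, and strict concavity of $z\mapsto\E_\theta(\wh\S,z)$ (via the Hessian formula $-\frac{1}{\omega_\theta}\int_{\C_\theta}\xi_i\xi_j\,\ell^{-1}u_z^{-2}\,d\sigma$) for uniqueness. Your step (iii), the reflection/first-variation argument excluding a boundary maximizer, reproduces what the paper defers to the subsequent Proposition \ref{lem entropy}; it is not needed for the statement as written (which only asserts $z_e\in\wh{\p\S}$), but it is carried out correctly and in fact makes the uniqueness step cleaner by confining the concavity argument to the interior where $u_z>0$.
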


\begin{proof}
We divide the proof into two steps: first, we establish the existence of $z_e$, second, we prove its uniqueness.

\

 \textit{Step 1: Existence}. By Definition \ref{def-cap-entropy}, we know there exists a  sequence $\{z_j\}_{j\geq 1}\subset \text{int}(\wh{\p\S})$ and some point $z_e\in \wh{\p\S}$  such that 
\begin{eqnarray}\label{maximizing-sequence-entropy}
    \mathcal{E}_{\theta}(\widehat{\S}, z_j)\rightarrow \mathcal{E}_{\theta}(\widehat{\S})~ ~~{\rm{and}}~z_j\rightarrow z_{e}~~{\rm{as}}~j\rightarrow +\infty.
\end{eqnarray}
From \eqref{capillary support}, \eqref{defn-capillary-outer-radius} and \cite[Remark 2.8]{MWW}, we know  
\begin{eqnarray}\label{double-control}
    u_{z_j}(\xi)\leq 2\rho_{+}(\widehat{\S}, \theta), ~~\text{ for all } \xi\in \C_\theta, \end{eqnarray}
by Fatou's Lemma and \eqref{maximizing-sequence-entropy}, we deduce 
\begin{eqnarray*}
 \int_{\C_{\theta}}\log \left(\frac{2\rho_{+}(\widehat{\S}, \theta)}{u_{z_{e}}(\xi)}\right)\ell (\xi) d\s\leq \liminf\limits_{j\rightarrow+\infty} \int_{\C_{\theta}}\log \left(\frac{2\rho_{+}(\widehat{\S
    }, \theta)}{u_{z_j}(\xi)}\right)\ell(\xi)d\s,
\end{eqnarray*}
in view of \eqref{maximizing-sequence-entropy}, then it follows
\begin{eqnarray}\label{fatou-lemma-estimate}
    \frac{1}{\omega_{\theta}}\int_{\C_{\theta}} \left(\log u_{z_{e}}(\xi)\right)\ell(\xi)d\sigma\geq \limsup\limits_{j\rightarrow +\infty} \frac{1}{\omega_{\theta}}\int_{\C_{\theta}}(\log u_{z_j}(\xi))\ell(\xi)d\sigma=\mathcal{E}_{\theta}(\widehat{\S}).
\end{eqnarray}
On the other hand, by Definition \ref{def-cap-entropy}, it is obvious that
$$ \frac{1}{\omega_{\theta}}\int_{\C_{\theta}}\left(\log u_{z_{e}}(\xi)\right)\ell(\xi)d\sigma\leq \mathcal{E}_{\theta}(\widehat{\S}),$$
together with \eqref{fatou-lemma-estimate}, we obtain that $z_e$ is the desired supremum point of $\E_\theta(\wh\S)$.

\ 

 \textit{Step 2: Uniqueness}. Without loss of generality, we assume that the origin lies in ${\rm{int}}(\widehat{\partial \S})$. For all point $z\coloneqq (z_{1}, \cdots, z_{n}, 0)\in \widehat{\partial \S}$, it follows from \eqref{uz} that we can rewrite
    \begin{eqnarray*}
        \frac{1}{\omega_{\theta}}\int_{\C_{\theta}}(\log u_{z}(\xi) )\ell(\xi) d\sigma 
        &= &\frac{1}{\omega_{\theta}}\int_{\C_{\theta}}\log \left(u(\xi)-\ell^{-1}(\xi)\sum\limits_{i=1}^{n}z_{i}\xi_{i}\right)\ell(\xi) d\s \\&\eqqcolon & F(z),
    \end{eqnarray*}where $u(\xi)$ is the capillary support function of $\wh\S$ with respect to the origin.
A direct calculation yields
\begin{eqnarray}\label{eq-concavity-F(z)}
    \frac{\partial^{2}F(z)}{\partial z_{i}\partial z_{j}}=-\frac{1}{\omega_{\theta}}\int_{\C_{\theta}}\frac{\xi_{i}\xi_{j}}{\ell(\xi)u_{z}^{2}(\xi)}d\s,~~~~\text{for all}~~ 1\leq i,j\leq n.
\end{eqnarray}
Hence, this implies that $F(z)$ is strictly concave with respect to $z\in \wh{\p\S}$, which in turn yields the uniqueness of $z_e$.
 
\end{proof}

The following proposition asserts that the capillary entropy point $z_{e}$, as established in Proposition~\ref{prop-exist-cap-entropy-point}, lies in the interior of $\widehat{\partial \S}$. Consequently, we have $u_{z_e} > 0$ on $\C_{\theta}$.

\begin{prop}\label{lem entropy}
 Let $\wh\S\in \K_\theta$ with  $\theta\in(0, \pi/2)$. 
Then $\mathcal{E}_{\theta}(\widehat{\S})$ is attained by a unique capillary support function $u_{z_e}$ with respect to the point $z_{e}\in \rm{int}(\widehat{\partial \S})$. Moreover, $u_{z_e}$ satisfies
\begin{eqnarray}\label{ortho}
    \int_{\C_{\theta}}\frac { \xi_i}{u_{z_e}(\xi)} d\s=0,\quad \forall~ i=1,\cdots, n.
    \end{eqnarray}
    Moreover, for any capillary support function $U\neq u_{z_e}$ in $\C_\theta$, there holds
    \begin{eqnarray}\label{ineq-entropy-unique-support-function}
        \E_\theta(\wh\S) > \frac{1}{\o_\theta} \int_{\C_\theta}  \left(\log U (\xi) \right) \ell d\s.
    \end{eqnarray}
    \end{prop}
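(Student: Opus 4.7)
The plan is to prove the three assertions in order: (i) the maximizer $z_e$ from Proposition~\ref{prop-exist-cap-entropy-point} actually lies in the interior of the flat face $\widehat{\partial\S}$; (ii) at such an interior $z_e$, the orthogonality relation \eqref{ortho} drops out as the first-order criticality condition; (iii) the strict inequality \eqref{ineq-entropy-unique-support-function} follows from the strict concavity of the entropy functional in $z$, which was already observed in the proof of Proposition~\ref{prop-exist-cap-entropy-point}.

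For (i), I argue by contradiction, adapting the reflection strategy used for the capillary Santal\'o point in Proposition~\ref{pro-santalo}. Suppose $z_e \in \partial\Sigma \cap \partial\R^{n+1}_+$, i.e.\ lies on the relative boundary of $\widehat{\partial\S}$. After a horizontal translation and rotation, I place $z_e$ at the origin and take $E_1$ as the outer normal of $\partial(\widehat{\partial\S})$ at $z_e$, so that $L := \{-tE_1 : 0 < t < t_0\} \subset \widehat{\partial\S}$ for some $t_0 > 0$; the assumption $\theta < \pi/2$ places $\widehat{\S}$ in the half-space $\{x_1 \le 0\}$, tangent to $\{x_1 = 0\}$ at the origin. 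For the reflection $N(\xi) = (-\xi_1, \xi_2, \ldots, \xi_{n+1})$ on $\C_\theta$, exactly the computation leading to \eqref{big} yields $u_{z_e}(N(\xi)) \ge u_{z_e}(\xi)$ for all $\xi$ with $\xi_1 \ge 0$, with strict inequality on a set of positive measure. Along the admissible curve $z_t := -tE_1 \in \widehat{\partial\S}$, $t \in [0,t_0)$, the formula \eqref{uz} gives $u_{z_t}(\xi) = u_{z_e}(\xi) + t\,\xi_1/\ell(\xi)$, whence
\begin{equation*}
\frac{d}{dt}\E_\theta(\wh\S, z_t)\Big|_{t=0^+}
= \frac{1}{\o_\theta}\int_{\C_\theta} \frac{\xi_1}{u_{z_e}(\xi)}\,d\sigma.
\end{equation*}
Splitting by the sign of $\xi_1$ and changing variables $\xi \mapsto N(\xi)$ on $\{\xi_1 < 0\}$ (the measure $d\sigma$ and $\ell$ are invariant under $N$), I get
\begin{equation*}
\frac{d}{dt}\E_\theta(\wh\S, z_t)\Big|_{t=0^+}
= \frac{1}{\o_\theta}\int_{\{\xi_1 > 0\}} \xi_1 \left(\frac{1}{u_{z_e}(\xi)} - \frac{1}{u_{z_e}(N(\xi))}\right)d\sigma > 0,
\end{equation*}
contradicting the fact that $z_e$ maximizes $\E_\theta(\wh\S, \cdot)$ and $-E_1$ is an admissible direction from $z_e$. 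Hence $z_e \in \rm{int}(\widehat{\partial\S})$.

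For (ii), once $z_e$ is interior, $z_s := z_e + sE_i$ lies in $\rm{int}(\widehat{\partial\S})$ for all small $|s|$ and every $1 \le i \le n$. Differentiating
\begin{equation*}
\E_\theta(\wh\S, z_s)
= \frac{1}{\o_\theta}\int_{\C_\theta}\log\!\left(u_{z_e}(\xi) - \frac{s\,\xi_i}{\ell(\xi)}\right)\ell(\xi)\,d\sigma
\end{equation*}
in $s$ and using $\frac{d}{ds}\E_\theta(\wh\S, z_s)|_{s=0} = 0$ yields exactly \eqref{ortho}. For (iii), by \eqref{uz} any capillary support function $U$ of $\wh\S$ equals $u_z$ for some $z \in \widehat{\partial\S}$, so \eqref{ineq-entropy-unique-support-function} is equivalent to $F(z_e) > F(z)$ whenever $z \neq z_e$, where $F$ is the functional introduced in the proof of Proposition~\ref{prop-exist-cap-entropy-point}. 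The Hessian computation \eqref{eq-concavity-F(z)} shows that the quadratic form $-\sum_{i,j} \partial_i\partial_j F(z)\, v_i v_j = \frac{1}{\o_\theta}\int_{\C_\theta} \frac{(\sum_i v_i \xi_i)^2}{\ell(\xi)\,u_z^2(\xi)}\,d\sigma$ is strictly positive unless $v = 0$, so $F$ is strictly concave on $\widehat{\partial\S}$ and the strict inequality follows.

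The main obstacle is step (i): one must verify that the reflection argument, which originally ensured the Santal\'o point avoids the boundary of $\widehat{\partial\S}$, still applies after replacing the weighted integrand $\ell/u_z^{n+2}$ used for volume with the new integrand $\ell \log u_z$ coming from the capillary entropy. The geometric input (that $\theta < \pi/2$ forces $\widehat{\S}$ to lie on the correct side of the tangent hyperplane through $z_e$) is the same, but I must track sign conventions carefully so that the reflection produces strict positivity of the one-sided derivative rather than cancellation.
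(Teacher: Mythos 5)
Your proposal is correct and follows essentially the same route as the paper's proof: the same reflection argument (reusing \eqref{big} and the curve $z_t=-tE_1$) to push the maximizer off the relative boundary of $\widehat{\partial\S}$, the same first-order criticality computation at the interior point to obtain \eqref{ortho}, and the same appeal to the strict concavity \eqref{eq-concavity-F(z)} for the strict inequality \eqref{ineq-entropy-unique-support-function}. The sign bookkeeping you flag as the main obstacle is handled exactly as you describe, so nothing further is needed.
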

    
\begin{proof}
We begin by showing that the capillary entropy point $z_{e}$ lies in ${\rm{int}}(\widehat{\partial \S})$. Suppose, to the contrary, that $z_{e} \in \partial \S$. Without loss of generality, we may assume that $z_{e}$ is the origin.  Following the proof and notations introduced in Proposition~\ref{pro-santalo}, for all $t \in (0, t_0)$, the point $z_{t} \coloneqq -t \zeta$ lies in ${\rm{int}}(\widehat{\partial \S})$ and satisfies $u_{z_t}(\xi) > 0$. By \eqref{big}, we obtain
\begin{eqnarray*}
   0&\geq & \frac{d}{dt}\left(\int_{\C_{\theta}} \left(\log u_{z_t}(\xi)\right)\ell (\xi)d\s\right)\Big|_{t=0}=\int_{\C_{\theta}}\frac{\xi_{1}}{u_{z_e}(\xi)}d\s\\
   &=&\int_{\{\xi_{1}>0\}}
    \frac{ \xi_{1}}{u_{z_e}(\xi)}d\sigma+\int_{\{\xi_{1}<0\}}\frac{ \xi_{1}}{u_{z_e}(\xi)}d\s\\
    &=&\int_{\{\xi_{1}>0\}}\left(\frac{ \xi_{1}}{u_{z_e}(\xi)}-\frac{ \xi_{1} }{u_{z_e}(N(\xi))}\right)d\s>0,
\end{eqnarray*}
which leads to a contradiction. Hence we derive that  $z_{e}\in {\rm{int}}(\widehat{\partial \S})$ and in turn $u_{z_e}(\xi)>0$ for all $\xi\in \C_\theta$.  

Next, we verify that the assertion \eqref{ortho} holds. For all $1 \leq i \leq n$, denote $z_e^{r,i} \coloneqq z_{e} - r E_{i}$. For sufficiently small $r_0 > 0$, we have 
$$z_e^{r,i} \in {\rm{int}}(\widehat{\partial\S}) ~~~~\text{ for all } |r| \leq r_0 ~~\text{ and }~~ 1\leq i\leq n.$$
We reformulate
    \begin{eqnarray*}
   \frac{1}{\omega_{\theta}}\int_{\C_{\theta}}\left(\log u_{z_{e}^{r,i}}(\xi) \right)\ell(\xi) d\s=\frac{1}{\omega_{\theta}}\int_{\C_{\theta}}\log \left(u_{z_{e}}(\xi)+\ell^{-1}(\xi)r \xi_{i}\right)\ell(\xi) d\s\eqqcolon G_i(r).
    \end{eqnarray*}
  Proposition \ref{prop-exist-cap-entropy-point} implies that $G_i(r)$ attains its maximum value at $r=0$, hence, %we have 
  \begin{eqnarray*}
    0=\frac{d}{dr}G_i\Big|_{r=0}=\frac{1}{\omega_{\theta}}\int_{\C_{\theta}}\frac{\xi_{i}}{u_{z_{e}}(\xi)}d\s.
    \end{eqnarray*}
    
The last assertion \eqref{ineq-entropy-unique-support-function} follows from the strict concavity of  $F(z)$ in the proof of Proposition \ref{prop-exist-cap-entropy-point}, see \eqref{eq-concavity-F(z)}.  Hence, we complete the proof.

\end{proof}

Next, we show that the capillary entropy functional provides control over the geometric quantities of a capillary convex body. More precisely, the capillary entropy $\mathcal{E}_{\theta}(\widehat{\S})$ controls both the capillary outer and inner radii. This proposition will play a crucial role in establishing the $C^{0}$ estimate for solutions to the normalized capillary Gauss curvature flow \eqref{GCF-capillary-normalized}, ensuring that the solution does not collapse into a lower-dimensional convex set.

 \begin{prop}\label{radius}
Let $\widehat{\S} \in \K_\theta$ with $\theta\in (0, \pi /2)$. Then there exists a positive constant $C$, depending only on $n$ and $\theta$, such that
     \begin{eqnarray}\label{capillary outer}
           \rho_{+}(\widehat{\S}, \theta)\leq C e^{\mathcal{E}_{\theta}(\widehat\S)},
     \end{eqnarray}
     and 
     \begin{eqnarray}\label{capillary inner}
         \rho_{-}(\widehat{\S}, \theta)\geq C^{-1}{\rm{Vol}}(\widehat{\S}) e^{-n \mathcal{E}_{\theta}(\widehat \S)}.
     \end{eqnarray}
 \end{prop}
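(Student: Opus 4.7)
My plan is to adapt the strategy of Guan--Ni \cite[Section~2]{GN} from the closed case to the capillary setting, leveraging the refined properties of the capillary entropy point developed in Proposition \ref{prop-exist-cap-entropy-point} and Proposition \ref{lem entropy}. Throughout let $z_e$ denote the capillary entropy point of $\widehat\S$; after a horizontal translation I may assume $z_e$ is the origin and write $u\coloneqq u_{z_e}$, $h\coloneqq u\ell$. The orthogonality condition then reads $\int_{\C_\theta}\xi_i/u\,d\sigma=0$ for $i=1,\ldots,n$. The first reduction is the observation that, for any admissible center $z\in{\rm int}(\widehat{\p\S})$, one has $\rho_+(\widehat\S,\theta)\le\max_{\C_\theta}u_z$, since $u_z(\xi)\le\max u_z$ forces $\widehat\S\subset\widehat{\C_{\max u_z,\theta}(z)}$ via the capillary support inequality and the identification of $\C_{r,\theta}(z)$ as the level set of $F^\circ_\theta(\cdot-z)$. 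Specializing to $z=z_e$, the outer bound \eqref{capillary outer} reduces to showing $M\coloneqq\max_{\C_\theta}u\le Ce^{\E_\theta(\widehat\S)}$.

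To prove this, pick $\xi^*\in\C_\theta$ with $u(\xi^*)=M$ and a support point $p\in\S$ realizing $h(\xi^*)=\<\xi^*-\cos\theta e,p\>$. The convexity $\n^2 h+h\sigma\ge 0$ yields $h(\xi)\ge\<\xi-\cos\theta e,p\>$ throughout $\C_\theta$, so that $u(\xi)\ge cM$ on a set $A\subset\C_\theta$ whose $\sigma$-measure is bounded below uniformly in $n,\theta$. Splitting $\o_\theta\E_\theta(\widehat\S)=\int_A(\log u)\ell\,d\sigma+\int_{\C_\theta\setminus A}(\log u)\ell\,d\sigma$, the contribution on $A$ is at least $c\log M-C$. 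To control the complementary region, where $\log u$ could a priori be very negative, I invoke the orthogonality: on $A$ we have $1/u\le(cM)^{-1}$, so $\int_A(\xi-\cos\theta e)/u\,d\sigma$ has horizontal components of magnitude $\le C/M$; vanishing of the total integral then forces $\int_{\C_\theta\setminus A}|\xi-\cos\theta e|/u\,d\sigma\le C/M$, after which a weighted Jensen-type inequality in the spirit of the closed argument of Guan--Ni produces $\int_{\C_\theta\setminus A}(\log u)\ell\,d\sigma\ge -C$. Combining the two contributions gives $\E_\theta(\widehat\S)\ge c\log M-C$, which is \eqref{capillary outer}.

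For the inner radius \eqref{capillary inner}, I would reduce to the capillary analogue of the elementary convex-body inequality ${\rm{Vol}}(K)\le C(n)R(K)^n r(K)$. Specifically, using the John ellipsoid of $\widehat\S$ adapted to the half-space structure (or, equivalently, iterated slicing by hyperplanes parallel to a largest-width direction and exploiting that the inscribed spherical cap of radius $\rho_-$ is contained in $\widehat\S$), one obtains ${\rm{Vol}}(\widehat\S)\le C(n,\theta)\rho_+(\widehat\S,\theta)^n\rho_-(\widehat\S,\theta)$. Substituting \eqref{capillary outer} then yields $\rho_-(\widehat\S,\theta)\ge C^{-1}{\rm{Vol}}(\widehat\S)/\rho_+^n\ge C^{-1}{\rm{Vol}}(\widehat\S)e^{-n\E_\theta(\widehat\S)}$.

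The main obstacle is the bound $M\le Ce^{\E_\theta(\widehat\S)}$: the logarithm is unbounded below, so naively a small region on which $u$ is tiny could drag $\E_\theta$ toward $-\infty$ while $M$ remains large. What rescues the argument is the \emph{double} role of $z_e$ as both an interior maximizer of $z\mapsto\E_\theta(\widehat\S,z)$ (yielding the orthogonality) and a supremum of that functional: the orthogonality forbids the capillary support function from being asymmetrically small, and turning this heuristic into a clean quantitative estimate requires a careful weighted Jensen argument respecting both the geometry of $\C_\theta$ and the weight $\ell$.
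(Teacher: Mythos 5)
Your treatment of the inner radius \eqref{capillary inner} is essentially the paper's (reflection plus the elementary bound $\mathrm{Vol}(\widehat\S)\le C\rho_+^{\,n}\rho_-$, then substitute \eqref{capillary outer}), but your route to the outer bound \eqref{capillary outer} has a genuine gap at its central step. The orthogonality relations of Proposition \ref{lem entropy} assert only that $\int_{\C_\theta}\xi_i/u\,d\s=0$ for the $n$ \emph{horizontal} coordinates; they control the signed horizontal first moments of $1/u$, not $\int_{\C_\theta\setminus A}1/u\,d\s$. Hence the deduction that ``vanishing of the total integral forces $\int_{\C_\theta\setminus A}|\xi-\cos\theta e|/u\,d\s\le C/M$'' is false: the mass of $1/u$ on $\C_\theta\setminus A$ can concentrate near the axis $\{\xi'=0\}$, or be distributed with cancelling horizontal moments, without violating any of the $n$ constraints, and there is no constraint at all in the $E_{n+1}$ direction. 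Without that bound the Jensen step collapses and $\int_{\C_\theta\setminus A}(\log u)\ell\,d\s$ is not bounded below. There is also a quantitative loss even if the complementary bound held: your split gives only $\o_\theta\E_\theta(\widehat\S)\ge|A|(\min_{\C_\theta}\ell)\log(cM)-C$, i.e. $\E_\theta(\widehat\S)\ge c_1\log M-C$ with $c_1=|A|\min\ell/\o_\theta<1$, which yields $\rho_+\le Ce^{\E_\theta(\widehat\S)/c_1}$ --- strictly weaker than \eqref{capillary outer} when the entropy is large.

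The paper avoids both problems by not using the entropy point at all: since $\E_\theta(\widehat\S)$ is a supremum over $z$, a single well-chosen test point suffices. It takes the touching point $z$ of $\S$ with the smallest enclosing cap (so $|z|\ge(1-\cos\theta)\rho_+(\widehat\S,\theta)$) and argues in two cases. If $z$ is mostly horizontal, the segment from the origin to $\wh z=(z_1,0,\dots,0)$ lies in $\widehat{\p\S}$ (this is where $\theta<\pi/2$ enters), and evaluating the support function at the \emph{midpoint} $\wh z/2$ gives $h_{\wh z/2}(\xi)\ge\tfrac12|\langle\xi,\wh z\rangle|$ for \emph{every} $\xi\in\C_\theta$; the absolute value is the key, since $\log|\xi_1|$ has an integrable singularity, so the $\log z_1$ term carries the full weight $\o_\theta$ and the correction is a constant depending only on $n,\theta$. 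If $z$ is mostly vertical, the segment from $\wh z$ to $z$ gives $h_{\wh z}(\xi)\ge z_{n+1}(\xi_{n+1}+\cos\theta)$, which is positive everywhere. To salvage your approach you would need a pointwise lower bound of the form $u_z(\xi)\ge cM\,|\langle\xi-\cos\theta e,w\rangle|$ valid on all of $\C_\theta$ for a suitable $z$ --- which is exactly the paper's midpoint trick --- rather than the one-sided bound $h\ge\langle\xi-\cos\theta e,p\rangle$, which is vacuous wherever the inner product is negative.
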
 
 \begin{proof}
Let $\wh{\C_{\rho_{0}, \theta}(x_{0})}$ be the smallest spherical cap containing $\widehat{\S}$, with radius $\rho_{0} \coloneqq \rho_{+}(\widehat{\S}, \theta)$. Let $z \in \S \cap \C_{\rho_{0}, \theta}(x_{0})$. Without loss of generality, assume $x_{0}$ is the origin, and denote $z\coloneqq (z_{1}, 0, \cdots, 0, z_{n+1})$ with $z_{1}>0$ and $ z_{n+1}\geq 0$. Moreover, we have  \begin{eqnarray}\label{z-norm}
      |z|\geq (1-\cos\theta )\rho_{+}(\widehat{\S}, \theta).
  \end{eqnarray}
We begin by proving the assertion \eqref{capillary outer}. The proof is divided into two cases.

\

\noindent {\it Case 1}. $z_{1}>\frac{1}{2}|z|$. Denote $\wh{z}\coloneqq(z_{1}, 0, \cdots, 0, 0)$. Since $\theta\in (0, \pi /2)$, we have
 line segment $$
   L_{1}\coloneqq  \left\{s\widehat{z} \mid 0\leq s\leq 1 \right\}$$ lies inside $\widehat{\p\S}$, this implies that  the support function of the  line segment $L_{1}$ with respect to the point $\frac{\widehat{z}}{2}$ is bounded above by $h_{\frac{\widehat z}{2}}$, i.e., 
\begin{eqnarray*}
      \frac{1}{2}|\<\xi, \widehat{z}\>|=\frac{1}{2}|\<\xi-\cos\theta e, \widehat{z}\>|\leq h_{\frac{\widehat z}{2}}(\xi), \quad \forall~\xi\in \C_{\theta}.
    \end{eqnarray*}
It yields
    \begin{eqnarray}
   &&\log z_{1} \int_{\C_{\theta}}\ell(\xi) d\s-\int_{\C_{\theta}}(\log 2\ell(\xi)) \ell(\xi) d\s+\int_{\C_{\theta}}\log |\xi_{1}|\ell(\xi) d\s\notag  \\
   &=&  \int_{\C_{\theta}}\log\left(\frac{1}{2\ell}|\<\xi, \widehat z\>|\right)\ell(\xi) d\s
   \leq \int_{\C_{\theta}} \left(\log u_{\frac{\widehat z}{2}}(\xi)\right)
 \ell(\xi) d\s \leq \omega_{\theta}\mathcal{E}_{\theta}(\widehat{\S}). ~~~~~~~~\label{case-1}  \end{eqnarray}
 From \eqref{z-norm}, we have
 \begin{eqnarray*}
     \rho_+(\wh\S,\theta) \leq \frac{2 }{1-\cos\theta} z_1,
 \end{eqnarray*}
together with \eqref{case-1}, we conclude  that
\begin{eqnarray*}
    \rho_{+}(\widehat\S, \theta)\leq C_1 e^{\mathcal{E}_{\theta}(\widehat{\S})},
\end{eqnarray*}
where $C_1$ is a positive constant depending only on $n$ and $\theta$.

\

\noindent {\it Case 2}. $z_{1}\leq \frac{1}{2}|z|$. In this case, we have \begin{eqnarray}\label{big-1}
    z_{n+1}\geq \frac{\sqrt{3}}{2}|z|,
\end{eqnarray}
and the line segment 
$$L_{2}\coloneqq  \left\{s\widehat{z}+(1-s)z\mid  0\leq s\leq 1\right \}$$
 lies inside $\widehat\S$. Similarly, the support function of the line segment $L_{2}$ with respect to the point $\widehat{z}$ is bounded above by the support function $h_{\widehat{z}}$ of $\widehat{\S}$ with respect to $\wh z$, i.e., 
\begin{eqnarray*}
z_{n+1}(\xi_{n+1}+\cos\theta) \leq h_{\widehat z}(\xi),\quad \forall~\xi\in\C_{\theta}.
\end{eqnarray*}
Then we get
\begin{eqnarray*}
   &&\log z_{n+1}\int_{\C_{\theta}}\ell(\xi) d\s +\int_{\C_{\theta}}\log (\xi_{n+1}+\cos\theta)\ell(\xi) d\s-\int_{\C_{\theta}} \log \ell(\xi) \ell(\xi) d\s\notag  \\
   &&\leq \int_{\C_{\theta}} \left(\log u_{\widehat{z}}(\xi)\right)\ell(\xi) d\s\leq \omega_{\theta}\mathcal{E}_{\theta}(\widehat{\S}), %\label{case-2}
\end{eqnarray*}
which implies 
\begin{eqnarray}\label{zn+1 upper}
    z_{n+1}\leq C_2 e^{\E_{\theta}(\widehat{\S})}.
\end{eqnarray}
where $C_2$ is a positive constant depending only on $n$ and $\theta$.
Combining with \eqref{z-norm}, \eqref{big-1} and \eqref{zn+1 upper}, we also obtain \eqref{capillary outer}. Hence, we complete the proof of \eqref{capillary outer}.

Next, we proceed to show \eqref{capillary inner} with the help of \eqref{capillary outer}.  Recall that in Proposition \ref{prop-santalo-ineq}, we obtain a symmetric convex body $\O$ by reflecting $\widehat{\S}$ with respect to the hyperplane $\{x_{n+1}=0\}$, and $\O$ satisfies
\begin{eqnarray}\label{double}
    2{\rm{Vol}}(\widehat{\S})={\rm{Vol}}(\Omega). 
\end{eqnarray}
Together with \eqref{inner radius} and \eqref{outer radius}, we have
\begin{eqnarray}\label{volume-control}
{\rm{Vol}}(\O)&\leq& 2n \omega_{n-1} \left(\rho_{+}(\widehat{\S})\right)^{n}\rho_{-}(\widehat{\S})\notag \\
&\leq& 2n\omega_{n-1}\left(\sin \theta \rho_{+}(\widehat{\S}, \theta) \right)^{n}\sin\theta \rho_{-}(\widehat{\S}, \theta),
\end{eqnarray}
where $\omega_{n-1}$ is the area of $\mathbb{S}^{n-1}$. 
Inserting \eqref{capillary outer} and \eqref{double} into \eqref{volume-control}, we conclude that \eqref{capillary inner} holds.

\end{proof}

The following proposition shows that the difference between $\mathcal{E}_{\theta}(\widehat{\S})$ and $\mathcal{E}_{\theta}(\widehat{\S}, z)$  provides a quantitative estimate over the distance between $z$ and $z_{e}(\wh\S)$. This estimate will be instrumental in proving that, as $t \to +\infty$, the capillary entropy point of the solution to the flow \eqref{GCF-capillary-normalized} converges to the origin.

%Roughly speaking,  let $\wh\S_t $ be convex capillary hypersurface satisfying \eqref{GCF-capillary-normalized}, then  combining them with the monotonicity of capillary entropy functional along flow \eqref{GCF-capillary-normalized}, we can show as $t\rightarrow +\infty$, the entropy point of $\wh\S_t $ tends to origin, and moreover ${\rm{dist}}(\S_{t}, o)\geq c_{0}$ for some positive constant $c_{0}$.

\begin{prop}
 Let $\wh\S\in \K_\theta$ with  $\theta\in(0, \pi/2)$  . Then there exists a positive constant $D$ depending only on $n$ and $\theta$, such that 
    \begin{eqnarray}\label{stability-estimate-entropy-point}
        \frac{1}{\omega_{\theta}}\int_{\mathcal{C}_{\theta}}\left(\log u_{z}(\xi)\right)\ell(\xi) d\s \leq \mathcal{E}_{\theta}(\widehat{\S})-De^{-2\mathcal{E}_{\theta}(\widehat{\S})}|z-z_{e}|^{2},~\forall z\in {\rm{int}}(\widehat{\partial\Sigma}),
    \end{eqnarray}
  where $z_{e}$ is the capillary entropy point of $\wh\S$, as established in Proposition \ref{prop-exist-cap-entropy-point}. 
\end{prop}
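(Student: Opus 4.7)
The plan is to exploit the strict concavity of the functional
$$F(w) \coloneqq \frac{1}{\omega_\theta}\int_{\C_\theta} \bigl(\log u_w(\xi)\bigr)\,\ell(\xi)\, d\s, \qquad w \in \text{int}(\widehat{\p\S}),$$
together with the fact, already established in Proposition \ref{prop-exist-cap-entropy-point} and Proposition \ref{lem entropy}, that $z_e \in \text{int}(\widehat{\p\S})$ is the unique interior maximum of $F$, so that $\nabla F(z_e) = 0$. Given $z \in \text{int}(\widehat{\p\S})$, set $z_t \coloneqq z_e + t(z-z_e)$ for $t \in [0,1]$; since $\widehat{\p\S}$ is a convex subset of $\p \R^{n+1}_+$, the whole segment lies in $\text{int}(\widehat{\p\S})$. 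Writing $\phi(t) \coloneqq F(z_t)$, we have $\phi(0) = \mathcal{E}_\theta(\widehat\S)$, $\phi'(0) = 0$, and Taylor's formula with integral remainder yields
$$\mathcal{E}_\theta(\widehat\S) - F(z) \;=\; -\int_0^1 (1-t)\,\phi''(t)\, dt,$$
so the whole problem reduces to bounding $-\phi''(t)$ from below by a constant multiple of $|z-z_e|^2$.

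Using the Hessian formula \eqref{eq-concavity-F(z)}, I compute
$$-\phi''(t) \;=\; \frac{1}{\omega_\theta}\int_{\C_\theta} \frac{\bigl(\sum_{i=1}^{n}\xi_i (z-z_e)_i\bigr)^2}{\ell(\xi)\, u_{z_t}^2(\xi)}\, d\s.$$
Since $z_t \in \widehat{\p\S}\subset \widehat\S$, the bound $u_{z_t}(\xi)\leq 2\rho_+(\widehat\S,\theta)$ (cf.\ \cite[Remark 2.8]{MWW}, as used in Proposition \ref{radius}) combined with \eqref{capillary outer} gives a uniform upper bound
$$u_{z_t}(\xi) \;\leq\; C_1\, e^{\mathcal{E}_\theta(\widehat\S)}, \qquad \forall\, \xi \in \C_\theta,\; t\in [0,1],$$
with $C_1 = C_1(n,\theta)$. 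Substituting yields
$$-\phi''(t) \;\geq\; \frac{e^{-2\mathcal{E}_\theta(\widehat\S)}}{C_1^2\,\omega_\theta}\int_{\C_\theta} \frac{\bigl(\sum_i \xi_i (z-z_e)_i\bigr)^2}{\ell(\xi)}\, d\s.$$

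It remains to show that the quadratic form $Q(v)\coloneqq \frac{1}{\omega_\theta}\int_{\C_\theta} \frac{(\sum_i \xi_i v_i)^2}{\ell(\xi)}\, d\s$ on $\R^n$ satisfies $Q(v)\geq c_\theta |v|^2$ for some $c_\theta = c_\theta(n,\theta)>0$. This is immediate: $\C_\theta$ and $\ell$ are invariant under rotations about the $e$-axis, hence the matrix $M_{ij} \coloneqq \frac{1}{\omega_\theta}\int_{\C_\theta} \xi_i\xi_j \ell^{-1}\, d\s$ is a positive scalar multiple of $\delta_{ij}$, with
$$c_\theta = \frac{1}{n\,\omega_\theta}\int_{\C_\theta} \frac{|\xi'|^2}{\ell(\xi)}\, d\s > 0, \qquad \xi'=(\xi_1,\dots,\xi_n).$$
Integrating $-\phi''(t)\geq c_\theta C_1^{-2} e^{-2\mathcal{E}_\theta(\widehat\S)}|z-z_e|^2$ against $(1-t)$ on $[0,1]$ delivers \eqref{stability-estimate-entropy-point} with $D \coloneqq \tfrac{1}{2}c_\theta C_1^{-2}$, depending only on $n$ and $\theta$.

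I expect the only delicate point to be the uniform upper bound on $u_{z_t}$ along the entire segment joining $z_e$ to $z$; the key observation is that $\rho_+(\widehat\S,\theta)$ is a basepoint-independent quantity, so \eqref{capillary outer} supplies a bound which is uniform in $t$ and depends on $\widehat\S$ only through $\mathcal{E}_\theta(\widehat\S)$. Everything else is elementary once the Hessian identity \eqref{eq-concavity-F(z)} and the rotational symmetry of $\C_\theta$ are exploited.
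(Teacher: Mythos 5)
Your proposal is correct and follows essentially the same route as the paper: a second-order Taylor expansion of the entropy in the basepoint about $z_e$, vanishing gradient from \eqref{ortho}, the Hessian identity \eqref{eq-concavity-F(z)}, and the uniform bound $u \leq 2\rho_+(\widehat\S,\theta) \leq Ce^{\mathcal{E}_\theta(\widehat\S)}$ along the segment. The only cosmetic differences are that you use the integral form of the remainder where the paper uses the Lagrange (mean-value) form, and that you make explicit, via the rotational symmetry of $\C_\theta$ and $\ell$ about the $e$-axis, the positive lower bound on the quadratic form $\int_{\C_\theta}\langle\xi,v\rangle^2\ell^{-1}d\s$, a point the paper leaves implicit.
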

  \begin{proof}
Proposition \ref{lem entropy} ensures that $z_{e}\in {\rm{int}}(\widehat{\partial\S})$. %Without loss of generality, we may assume that $z_{e}$ is the origin.
For all $z\in {\rm{int}}(\widehat{\partial\S})$,  the strict convexity of $\widehat{\partial\S}$ implies the line segment 
  $$N\coloneqq  \left\{s z_{e}+(1-s)z \mid  0\leq s\leq 1 \right\} $$  lies in ${\rm{int}}({\widehat{\partial\S}})$. %For brevity,  we abbreviate $e(\S)$ as $e$.
 Denote $t=z-z_{e}\eqqcolon (t_{1}, \cdots, t_{n}, 0)$. By \eqref{capillary support}, we rewrite
  \begin{eqnarray*}
      \frac{1}{\omega_{\theta}}\int_{\C_{\theta}}\left(\log u_{z}(\xi)\right)\ell(\xi) d\s
      =\frac{1}{\omega_{\theta}}\int_{\C_{\theta}}\log \left(u_{z_{e}}(\xi)-\ell^{-1} \sum_{i=1}^n t_{i}\xi_{i}\right)\ell(\xi) d\sigma\eqqcolon Q(t).
  \end{eqnarray*}
  By direct calculations, together with \eqref{ortho}, it follows
  \begin{eqnarray*}
      \frac{\partial Q(t)}{\partial t_{i}}\Big|_{t=0}=-\frac{1}{\omega_{\theta}}\int_{\C_{\theta}}\frac{\xi_{i}}{u_{z_e}(\xi)}d\s=0,~~ \text{ for all } \quad 1\leq i\leq n,
  \end{eqnarray*}
and  
\begin{eqnarray*}
    \frac{\partial^{2}Q(t)}{\partial t_{i} \p t_{j}}=-\frac{1}{\omega_{\theta}}\int_{\C_{\theta}}\frac{\xi_{i}\xi_{j}}{\ell(\xi) u_{z}^{2}(\xi)}d\s, ~~\text{ for all } \quad 1\leq i,j \leq n, 
\end{eqnarray*} 
together with Taylor's theorem, it follows that there exists a point $z_*\in{\rm{int}}(\widehat{\partial\S})$, such that
\begin{eqnarray}\label{Qt}
    Q(t)=Q(0)-\frac{1}{2\omega_{\theta}}\int_{\C_{\theta}}\frac{\<\xi, z-z_{e}\>^{2}}{\ell u_{z_*}^{2}(\xi)}d\s.
\end{eqnarray}
Using \eqref{double-control} and \eqref{capillary outer}, we get
\begin{eqnarray}\label{support-lower}
    \frac{1}{u_{z_*}(\xi)}\geq \frac{1}{2\rho_{+}(\widehat{\S}, \theta)}\geq \frac{1}{2C}e^{-\mathcal{E}_{\theta}(\widehat{\S})}.
\end{eqnarray}
Inserting \eqref{support-lower} into \eqref{Qt}, we obtain 
\begin{eqnarray*}
    Q(t)&\leq &Q(0)-\frac{1}{4C^{2}}e^{-2\E_{\theta}(\widehat{\S})}|z-z_{e}|^{2}\int_{\C_{\theta}}\left<\xi, \frac{z-z_{e}}{|z-z_{e}|}\right>^{2}\ell^{-1}(\xi)d\s \\
    &\leq& \mathcal{E}_{\theta}(\widehat{\S})-De^{-2\mathcal{E}_{\theta}(\widehat{\S})}|z-z_{e}|^{2},
    \end{eqnarray*}
where the constant $D$ only depends on $n$ and $\theta$. Hence, we complete the proof.
 \end{proof}

In view of Proposition \ref{prop-exist-cap-entropy-point}, we remark that \eqref{stability-estimate-entropy-point} is equivalent to 
  \begin{eqnarray}\label{ineq-stability-estimate}
      |z-z_e|^2 \leq \frac 1 D e^{2\E_\theta(\wh\S)}\left( \E_\theta(\wh\S,z_e) - \E_\theta(\wh\S, z) \right),~~~ \forall z\in {\rm{int}}(\widehat{\p\S}),
  \end{eqnarray} which provides a stability-type estimate for $\E_\theta(\wh\S, z)$ in terms of the point $z$.
%Note that the capillary entropy functional $\E_\theta(\wh\S)$ is in general may not be continuous with respect to the Hausdorff distance. %In order to show the capillary entropy point $z_e$ of $\widehat{\S}$ will stay away from $\S$. This result plays a pivotal role in deriving the positive lower bound of the solution to flow \eqref{eq-scalar-capillary-support-normalized-GCF}.  

 %Note that the capillary entropy functional $\E_\theta(\wh\S)$ is in general may not be continuous with respect to the Hausdorff distance of the capillary convex bodies $\wh\S\in \K_\theta$, if requiring $\S$ to be smooth.   

\subsection{Monotonicity and bounds of support function}
In this subsection, we show the key monotone property of the capillary entropy along normalized flow \eqref{eq-scalar-capillary-support-normalized-GCF}. Subsequently, by integrating the properties of the capillary entropy derived in Section  \ref{sec-4.2}, we can establish the positive lower bound of the capillary support function.  For notational convenience, along the flow \eqref{GCF-capillary-normalized}, we denote the capillary entropy point $z_{e}(\widehat{\S}_{t})$ of $\E_\theta(\wh\S_t)$ by $e(t)$ for all $t \geq 0$.

\begin{prop}\label{prop-mono}
Along flow \eqref{GCF-capillary-normalized}, the capillary entropy $\mathcal{E}_{\theta}(\wh\S_t )$ is non-increasing for all $t\geq 0$.  
\end{prop}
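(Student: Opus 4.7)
The plan is to apply the envelope theorem (Danskin) to the supremum defining $\mathcal{E}_\theta$ and then reduce the resulting time derivative to a Cauchy--Schwarz inequality that is closed off by the volume identity. Set $F(t,z) \coloneqq \frac{1}{\omega_\theta}\int_{\C_\theta} (\log u_z(\xi, t))\ell(\xi)\, d\sigma$, so that $\mathcal{E}_\theta(\wh\S_t) = F(t, e(t))$. By Propositions \ref{prop-exist-cap-entropy-point} and \ref{lem entropy}, $e(t)$ is the unique point in ${\rm{int}}(\wh{\p\S}_t)$ satisfying the first-order condition $\n_z F(t,z)|_{z=e(t)} = 0$, which is exactly \eqref{ortho}. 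Combined with the strict concavity of $F(t,\cdot)$ in \eqref{eq-concavity-F(z)}, the implicit function theorem makes $t \mapsto e(t)$ smooth, and Danskin's theorem yields
\begin{equation*}
\frac{d}{dt}\mathcal{E}_\theta(\wh\S_t) = \frac{\partial F}{\partial t}(t,z)\Big|_{z=e(t)} = \frac{1}{\omega_\theta}\int_{\C_\theta} \frac{\partial_t u_z(\xi,t)}{u_z(\xi,t)} \ell(\xi)\, d\sigma\bigg|_{z=e(t)}.
\end{equation*}

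Next I would compute the integrand explicitly. Since $u_z = u - \ell^{-1}\<\xi,z\>$ has no intrinsic $t$-dependence through the fixed $z$, the scalar equation \eqref{eq-scalar-capillary-support-normalized-GCF} gives $\partial_t u_z = \partial_t u = u - K$. Splitting $u = u_{e(t)} + \ell^{-1}\<\xi,e(t)\>$, the linear term integrates to zero against $\ell / u_{e(t)}$ by \eqref{ortho}, while the constant term contributes $\frac{1}{\omega_\theta}\int_{\C_\theta} \ell\, d\sigma = 1$. Here I use the identity $\omega_\theta = \int_{\C_\theta} \ell\, d\sigma$, which follows from $\omega_\theta = (n+1){\rm{Vol}}(\wh{\C_\theta})$ applied to $\C_\theta$ itself, where $h = \ell$ and $\det(\n^2\ell + \ell\sigma) = 1$ by \eqref{ell ij}. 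Thus
\begin{equation*}
\frac{d}{dt}\mathcal{E}_\theta(\wh\S_t) = 1 - \frac{1}{\omega_\theta}\int_{\C_\theta}\frac{K\ell}{u_{e(t)}}\, d\sigma.
\end{equation*}

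The remaining step is the lower bound $\int_{\C_\theta} \frac{K\ell}{u_{e(t)}}\, d\sigma \ge \omega_\theta$, which I would derive from Cauchy--Schwarz:
\begin{equation*}
\omega_\theta^2 = \left(\int_{\C_\theta} \ell\, d\sigma\right)^{2} \le \int_{\C_\theta} \frac{K\ell}{u_{e(t)}}\, d\sigma \cdot \int_{\C_\theta} \frac{u_{e(t)}\ell}{K}\, d\sigma.
\end{equation*}
The second factor equals $\int_{\C_\theta} h_{e(t)}\det(\n^2 h_{e(t)} + h_{e(t)}\sigma)\, d\sigma$, since $h_{e(t)} = \ell u_{e(t)}$ and $K^{-1} = \det(\n^2 h + h\sigma)$ is translation invariant. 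By the capillary volume formula this equals $(n+1){\rm{Vol}}(\wh\S_t - e(t)) = (n+1){\rm{Vol}}(\wh\S_t)$ (horizontal translation preserves volume), and by Proposition \ref{volume-preserving} this is $(n+1){\rm{Vol}}(\wh{\C_\theta}) = \omega_\theta$. Combining everything gives $\frac{d}{dt}\mathcal{E}_\theta(\wh\S_t) \le 0$.

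The only nontrivial technical point is the clean envelope-theorem reduction: one has to justify enough regularity of $e(t)$ to differentiate $t \mapsto F(t, e(t))$ classically. This is handled by the strict concavity of $F(t,\cdot)$, the smoothness of $F$ in both variables along the flow, and the interior location of $e(t)$ guaranteed by Proposition \ref{lem entropy}; alternatively one may avoid any regularity question on $e(t)$ altogether by working with the one-sided comparison $\mathcal{E}_\theta(\wh\S_{t+h}) \ge F(t+h, e(t))$ to pass to the limit $h \to 0^+$. After this, the argument is purely algebraic: orthogonality \eqref{ortho} removes the $e(t)$ term, and Cauchy--Schwarz paired with the preserved volume identity closes the inequality, with equality exactly when $u_{e(t)}/K$ is constant, i.e., at a soliton.
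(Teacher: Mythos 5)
Your argument is correct, and it reaches the same algebraic core as the paper's proof --- the identity $\int_{\C_\theta}\tfrac{u_{e(t)}\ell}{K}\,d\s=(n+1){\rm{Vol}}(\wh\S_t)=\o_\theta$ (via the translation invariance of $\n^2h+h\s$, the divergence-theorem cancellation of the linear term, and volume preservation) combined with the observation that $\int\tfrac{K\ell}{u_{e(t)}}\,d\s\ge\o_\theta$; your Cauchy--Schwarz step is exactly the paper's nonnegativity of $\int\bigl(\sqrt{U/K}-\sqrt{K/U}\bigr)^2\ell\,d\s$ in disguise. Where you genuinely diverge is in handling the supremum over $z$. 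You differentiate the value function directly via the envelope theorem, using the first-order condition \eqref{ortho} and the nondegenerate concavity \eqref{eq-concavity-F(z)} to get smoothness of $e(t)$ by the implicit function theorem; this is legitimate and arguably more transparent, since the chain rule kills the $\dot e(t)$ term. The paper instead avoids any regularity of $e(t)$: for each fixed $t_0$ it builds the auxiliary function $U(\xi,t)=u(\xi,t)-e^{t-t_0}\ell^{-1}\<e(t_0),\xi\>$, which equals $u_{e(t_0)}$ at $t=t_0$ and still solves $\p_tU=U-K$, then uses the one-sided bound $\E_\theta(\wh\S_t)\ge\frac{1}{\o_\theta}\int\log U(\cdot,t)\,\ell\,d\s$ for $t\le t_0$ together with the monotone decay of $\int\log U\,\ell\,d\s$ to conclude $\E_\theta(\wh\S_t)\ge\E_\theta(\wh\S_{t_0})$. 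One caveat: your proposed regularity-free ``alternative'' $\E_\theta(\wh\S_{t+h})\ge F(t+h,e(t))$ bounds the increment $\E(t+h)-\E(t)$ from \emph{below} by $F(t+h,e(t))-F(t,e(t))$, which is the wrong direction for proving monotone decrease; to make that route work you must evaluate the comparison at the \emph{later} optimizer (as the paper does, with the extra $e^{t-t_0}$ factor so the comparison function still solves the flow). Since your main line of argument does not rely on that remark, the proof stands.
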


\begin{proof}
    For any $t_{0}>0$, 
   let $u_{e(t_{0})}$ be the capillary support function of $\wh\S_{t_0} $ with respect to the capillary entropy point $e({t_{0}})\in  \text{int}(\widehat{\partial\S}_{t_{0}})$. %For $t<t_{0}$ and $t$ is sufficiently close to $t_{0}$, 
   We define the function
   $$U(\xi,t)\coloneqq u(\xi,t)-e^{t-t_{0}}\frac{\<e(t_{0}), \xi\>}{\ell(\xi)}, ~~~\xi\in \C_\theta, ~~t\geq 0.$$
 By the very definition, we note that \begin{eqnarray}\label{eq-U-t-0}
  U(\xi,t_0)=u_{e(t_0)}(\xi), \text{ for all } \xi\in \C_\theta.
  \end{eqnarray} Hence $U(\cdot,t)>0$ in $\C_\theta$ for all $t\in(t_0-\varepsilon, t_0]$, where $\varepsilon>0$ is  some sufficiently small constant.
   Moreover, it is readily seen that $U(\cdot, t)$ satisfies $\n^2(\ell U)+(\ell U) I>0$ in $\C_\theta$ and $\n_\mu (\ell U)=\cot\theta \ell U$ on $\p\C_\theta$, then $U$ is the capillary support function by using \cite[Definition 2.5 and Proposition 2.6]{MWWX}. Thus, by applying \eqref{ineq-entropy-unique-support-function} to $\wh\S_t$, we obtain
   \begin{eqnarray}\label{ineq-entropy-unique-support-fuction-flow}
       \E_\theta(\wh\S_t) \geq  \frac 1 {\o_\theta} \int_{\C_\theta} \left(\log U(\xi,t) \right)\ell d\s, ~~~~~\text{ for all } t\in (t_0-\varepsilon, t_0].
   \end{eqnarray}
   On the other hand, it follows from \eqref{eq-scalar-capillary-support-normalized-GCF} that $U(\cdot,t)$ satisfies
   \begin{eqnarray}\label{equation-U-flow} \left\{
\begin{array}{llll}
  \p_t U &=&U - K(\xi,t)   & \text{ in } ~\C_\theta ,\\ 
	\n_\mu U&=& 0 & \text{ on }~ \p \C_\theta ,\end{array}  \right.
\end{eqnarray}where we recall that $K(\xi,t)=\frac 1 {\det \left(\n^2 h+h \s \right)}=\frac 1 {\det \left(\n^2 (\ell U)+\ell U \s \right)}.$
   %(In fact, $u(x,t)$ and $u_{e(t)}$ are the capillary support function of $\S_{t}$ with respect to origin and the point $\exp(t-t_{0})e(t_{0})\in \partial \RR^{n+1}_{+}$, respectively.)
A direct calculation and using \eqref{equation-U-flow}, it follows     \begin{eqnarray}\label{d-1}
        \frac{d}{dt} \int_{\C_{\theta}}(\log U(\xi,t))\ell(\xi) d\s = \int_{\C_{\theta}}\frac{U(\xi,t)-K(\xi, t)}{U(\xi,t)}\ell(\xi) d\s.
    \end{eqnarray}
Note that
\begin{eqnarray}
    && \int_{\C_{\theta}}\left(\sqrt{\frac{U(\xi,t)}{K(\xi, t)}}-\sqrt{\frac{K(\xi, t)}{U(\xi,t)}}\right)^{2}\ell (\xi)d\s \notag \\
    &=& \int_{\C_{\theta}}\left(\frac{K(\xi, t)}{U(\xi,t)}+\frac{U(\xi,t)}{K(\xi, t)}-2\right)\ell(\xi) d\s \notag \\
    &=& \int_{\C_{\theta}}\frac{K(\xi, t)}{U(\xi,t)}\ell(\xi) d\s+ \int_{\C_\theta}  \frac{h(\xi,t)-e^{t-t_0}\<e(t_0),\xi\>}{K(\xi,t)} d\s-2\int_{\C_{\theta}}\ell(\xi) d\s. \label{eq-square-form-Sigma-t}  
\end{eqnarray}
Next, we analyze the middle integral term. From \cite[Proposition 2.12 and its proof]{MWWX} and Proposition \ref{volume-preserving}, we have
\begin{eqnarray}
    \int_{\C_\theta} \frac {h(\xi,t)}{K(\xi,t)} d\s &= & \int_{\C_\theta}  h \det(\n^2 h+h\s) d\s = (n+1)\text{Vol}(\wh\S_t) \notag\\&=&(n+1)\text{Vol}(\wh\C_\theta)
    =\int_{\C_\theta}\ell d\s. \label{eq-area-formula-Sigma-t}
\end{eqnarray}
By divergence theorem on $\wh\S_t$, there holds
\begin{eqnarray}
\int_{\C_\theta} \frac{\<e(t_0), \xi\>}{K(\xi,t)} d\s &= & \int_{\C_\theta} \frac{\<e(t_0), \xi-\cos\theta e\>}{K(\xi,t)} d\s = \int_{\S_t} \< e(t_0), \nu(\cdot,t)\> d\mu_t  \notag \\&=&\int_{\wh\S_t} \text{div}_{\RR^{n+1}_+} \left( e(t_0)\right) dV=0   
    .\label{eq-divergence-thm-Sigma-t}
\end{eqnarray}
Inserting \eqref{eq-area-formula-Sigma-t} and \eqref{eq-divergence-thm-Sigma-t} into \eqref{eq-square-form-Sigma-t}, we obtain
\begin{eqnarray*}
\int_{\C_{\theta}}\left(\sqrt{\frac{U(\xi,t)}{K(\xi, t)}}-\sqrt{\frac{K(\xi, t)}{U(\xi,t)}}\right)^{2}\ell (\xi)d\s =-\int_{\C_\theta} \left(1-\frac{K(\xi,t)}{U(\xi,t)}  \right) \ell d\s,
\end{eqnarray*}
%From Proposition \ref{volume-preserving},  we know that ${\rm{Vol}}(\wh\S_t )$ is fixed along flow \eqref{GCF-capillary-normalized},  then %\begin{eqnarray}\label{vol-preserving}  \frac{1}{\omega_{\theta}}\int_{\S_{t}}h_{e(t)}(\xi)d\mu_{t}=\frac{n+1}{\omega_{\theta}}{\rm{Vol}}(\wh\S_t )=\frac{n+1}{\omega_{\theta}}{\rm{Vol}}(\widehat{{\C_{\theta}}})=\frac{1}{\omega_{\theta}}\int_{\C_{\theta}}\ell (\xi)d\s.\end{eqnarray}
%Substituting \eqref{vol-preserving} into \eqref{d-31}, we get\begin{eqnarray}\label{d-4}    \frac{1}{\omega_{\theta}}\int_{\C_{\theta}}\left(\sqrt{\frac{U(\xi,t)}{K(\xi, t)}}-\sqrt{\frac{K(\xi, t)}{U(\xi,t)}}\right)^{2}\ell (\xi)d\s=\frac{1}{\omega_{\theta}}\int_{\C_{\theta}}\frac{K(\xi, t)}{U(\xi,t)}\ell(\xi) d\s-\frac{1}{\omega_{\theta}}\int_{\C_{\theta}}\ell(\xi) d\s.     \quad\quad \end{eqnarray}
together with \eqref{d-1}, we derive 
\begin{eqnarray*}
\frac{d}{dt} \int_{\C_{\theta}}(\log U(\xi,t))\ell(\xi) d\s =- \int_{\C_{\theta}}\left(\sqrt{\frac{U(\xi,t)}{K(\xi, t)}}-\sqrt{\frac{K(\xi, t)}{U(\xi,t)}}\right)^{2}\ell (\xi)d\s\leq 0,
\end{eqnarray*}
    which holds for all $t\in(t-\varepsilon, t_0]$. In view of \eqref{eq-U-t-0}, it yields
    \begin{eqnarray*}
        \int_{\C_\theta} \left(\log U(\xi,t) \right)\ell (\xi)d\s \geq \int_{\C_\theta} \left(\log U(\xi,t_0)\right)\ell (\xi)d\s =\o_\theta \E_\theta(\wh\S_{t_0}),
    \end{eqnarray*} together with \eqref{ineq-entropy-unique-support-fuction-flow}, we obtain
    \begin{eqnarray*}
        \mathcal{E}_{\theta}(\wh\S_t )\geq \frac{1}{\omega_{\theta}}\int_{\C_{\theta}}\left(\log U(\xi,t)\right)\ell(\xi)d\sigma\geq \mathcal{E}_{\theta}(\wh\S_{t_0} ),
        \end{eqnarray*}holds for all $t\in(t-\varepsilon, t_0]$. Therefore, the conclusion holds due to the arbitrariness of $t_0>0$.
\end{proof}

From Proposition \ref{lem-low-entroy} and Proposition \ref{prop-mono}, we see that $\E_{\theta}(\widehat\S_{t})$ is uniformly bounded from below and non-increasing. Thus, we know that $\E_{\infty}\coloneqq\lim\limits_{t\rightarrow +\infty}\E_{\theta}(\widehat\S_{t})$ is well-defined. 
\begin{prop}\label{pro-limit}
Suppose $u(\xi, t)$ is the positive solution to flow \eqref{eq-scalar-capillary-support-normalized-GCF}, then for all $t\geq 0$, there holds
    \begin{eqnarray}\label{limit prop}
       \int_{\C_{\theta}}\left(\log u(\xi,t)\right)\ell(\xi) d\s\geq \o_\theta\E_{\infty}+\int_{t}^{+\infty}\int_{\C_{\theta}}\left(\sqrt{\frac{K(\xi, t)}{u(\xi,t)}}-\sqrt{\frac{u(\xi, t)}{K(\xi, t)}}\right)^{2} \ell(\xi) d\s dt.
    \end{eqnarray}
\end{prop}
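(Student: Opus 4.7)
The plan is to mimic the monotonicity computation in the proof of Proposition~\ref{prop-mono}, but anchoring it at the capillary entropy point $e(T)\in {\rm{int}}(\wh{\p\S_T})$ of $\wh\S_T$ for large $T>t$, and then passing to the limit $T\to+\infty$. Fix $t\geq 0$; for each $T>t$, let $e(T)$ denote the capillary entropy point guaranteed by Proposition~\ref{prop-exist-cap-entropy-point}, and define
\[
U_T(\xi,s)\coloneqq u(\xi,s)-e^{s-T}\frac{\<e(T),\xi\>}{\ell(\xi)},\qquad \xi\in\C_\theta,\ s\in[t,T],
\]
which is geometrically the capillary support function of $\wh\S_s$ with respect to the point $e^{s-T}e(T)\in\p\R^{n+1}_+$. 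The first task is to verify that $U_T(\cdot,s)>0$ on $\C_\theta$ for every $s\in[t,T]$. Using the scaling $\wh\S_s=e^s\,\wh{\wt\S}_{\tau_s}$ and the contracting character of the unnormalized flow \eqref{GCF-capillary-2} (so that $\wh{\wt\S}_{\tau_T}\subset\wh{\wt\S}_{\tau_s}$ for $s\leq T$), the fixed spatial point $e^{-T}e(T)\in{\rm{int}}(\wh{\p\wt\S}_{\tau_T})$ remains in ${\rm{int}}(\wh{\p\wt\S}_{\tau_s})$; rescaling back yields $e^{s-T}e(T)\in{\rm{int}}(\wh{\p\S_s})$, whence $U_T(\cdot,s)>0$.

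With positivity in hand, I would rerun the calculation of Proposition~\ref{prop-mono} verbatim for $U_T$: from $\p_s U_T=U_T-K$, together with Proposition~\ref{volume-preserving} and the identity $\int_{\C_\theta}\<e(T),\xi\>/K\,d\sigma=0$ (which follows from the divergence theorem since $e(T)$ is a constant horizontal vector, exactly as in \eqref{eq-divergence-thm-Sigma-t}), one arrives at
\[
\frac{d}{ds}\int_{\C_\theta}(\log U_T)\,\ell\,d\sigma=-\int_{\C_\theta}\left(\sqrt{U_T/K}-\sqrt{K/U_T}\right)^{2}\ell\,d\sigma.
\]
Integrating from $s=t$ to $s=T$ and using $U_T(\cdot,T)=u_{e(T)}(\cdot,T)$, so that $\int_{\C_\theta}(\log U_T(\xi,T))\,\ell\,d\sigma=\omega_\theta\,\E_\theta(\wh\S_T)$, produces
\[
\int_{\C_\theta}(\log U_T(\xi,t))\,\ell\,d\sigma=\omega_\theta\,\E_\theta(\wh\S_T)+\int_t^T\!\!\int_{\C_\theta}\left(\sqrt{U_T/K}-\sqrt{K/U_T}\right)^{2}\ell\,d\sigma\,ds.
\]

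Finally, I would let $T\to+\infty$. By Proposition~\ref{radius} together with the monotonicity $\E_\theta(\wh\S_t)\leq\E_\theta(\wh\S_0)$ from Proposition~\ref{prop-mono}, we have a uniform bound $|e(T)|\leq\rho_+(\wh\S_T,\theta)\leq C$; hence for each fixed $s$, $U_T(\xi,s)\to u(\xi,s)$ uniformly on $\C_\theta$. Dominated convergence turns the left-hand side into $\int_{\C_\theta}(\log u(\xi,t))\,\ell\,d\sigma$, while Fatou's lemma applied to the non-negative integrand on the right gives
\[
\liminf_{T\to+\infty}\int_t^T\!\!\int_{\C_\theta}\left(\sqrt{U_T/K}-\sqrt{K/U_T}\right)^{2}\ell\,d\sigma\,ds\geq\int_t^{+\infty}\!\!\int_{\C_\theta}\left(\sqrt{u/K}-\sqrt{K/u}\right)^{2}\ell\,d\sigma\,ds,
\]
and $\E_\theta(\wh\S_T)\to\E_\infty$ by the very definition of $\E_\infty$. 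Assembling these three ingredients yields exactly \eqref{limit prop}. The main obstacle is the positivity step for $U_T$ on the full interval $[t,T]$: without it the logarithm is undefined and the monotonicity identity cannot be integrated. This is overcome by the comparison principle for the contracting unnormalized flow combined with the normalized–unnormalized scaling, as described above. Once positivity and the uniform bound on $|e(T)|$ are secured, the remaining manipulations — the monotonicity identity for $U_T$, dominated convergence, and Fatou's lemma — are routine.
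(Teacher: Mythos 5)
Your proposal follows the same overall route as the paper: anchor the monotonicity computation at the capillary entropy point $e(T)$ of $\wh\S_T$, integrate the identity from $t$ to $T$, use the uniform bound $|e(T)|\le C$ from Proposition \ref{radius} and the entropy monotonicity, and pass to the limit $T\to+\infty$. The one place where you genuinely diverge is the positivity of $U_T$ on $[t,T]$: the paper proves it by a forward-in-time propagation argument (if $u^T$ vanished at some $(\xi_0,t_0)$ with $t_0<T$, then $\p_t u^T=U_T-K<0$ there forces $u^T(\xi_0,\cdot)$ to become and stay negative, contradicting $u^T(\xi_0,T)=u_{e(T)}(\xi_0,T)>0$), whereas you observe that $U_T(\cdot,s)$ is the capillary support function of $\wh\S_s$ with respect to $e^{s-T}e(T)$ and show this point stays in ${\rm int}(\wh{\p\S_s})$ by undoing the normalization and invoking the nesting $\wh{\wt\S}_{\tau_T}\subset\wh{\wt\S}_{\tau_s}$ of the contracting unnormalized flow. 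Your geometric argument is correct (it uses only \eqref{uz}, the comparison principle already recorded in the proof of Theorem \ref{main-existence}, and the fact that for $\theta<\pi/2$ the directions $\xi-\cos\theta e$ point strictly into $\R^{n+1}_+$ so that the support function with respect to a relative interior point of the flat face is positive), and it is arguably more transparent than the paper's ODE contradiction; the paper's argument, on the other hand, is purely analytic and does not need to refer back to the unnormalized flow. The remaining differences are cosmetic: the paper first establishes the inequality with a truncated time integral $\int_0^{T_1}$ and then sends $T_1\to\infty$, while you apply Fatou's lemma directly to the full time integral; both are valid.
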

\begin{proof}
Fix $T_{1}>0$ and choose $T>T_{1}$. Let $a^{T}=(a_{1}^{T}, \cdots, a_{n}^{T}, 0)\eqqcolon e(T) \in \text{int}(\wh{\p\S}_T)$ be the capillary entropy point of $\wh\S_T $. Consider the function
\begin{eqnarray*}
    u^{T}(\xi, t)\coloneqq u(\xi, t)-e^{(t-T)}\ell^{-1} (\xi)\sum\limits_{i=1}^{n}a_{i}^{T}\xi_{i},~~~~\xi\in\C_\theta,~~ t\geq 0.
\end{eqnarray*} It is obvious that  \begin{eqnarray}\label{eq-rewrite-u-T-at-time-T}
    u^T(\xi, T)=u_{e(T)}(\xi,T),~~\forall \xi\in \C_\theta,
\end{eqnarray} is exactly the capillary support function of $\wh\S_T$ with respect to $e(T)$. Moreover, it follows from \eqref{eq-scalar-capillary-support-normalized-GCF} that $u^{T}(\cdot,t)$ satisfies
   \begin{eqnarray}\label{uT equ} \left\{
\begin{array}{llll}
  \p_t u^{T} &=&u^{T}-\frac{1}{\det \left(\n^2 (\ell u^{T})+\ell u^{T} \s \right)}   & \text{ in } ~\C_\theta ,\\ 
	\n_\mu u^{T}&=& 0 & \text{ on }~ \p \C_\theta  .\end{array}  \right.
\end{eqnarray}
Proposition \ref{prop-mono} implies
\begin{eqnarray}\label{entropy upper}
    \mathcal{E}_{\theta}(\wh\S_t )\leq \E_{\theta}(\wh\S_0 ), \quad\quad {\rm{for~all}}~t\in [0,+\infty),
    \end{eqnarray}
together with Proposition \ref{radius}, it yields
\begin{eqnarray}\label{aT}
    |a^{T}|\leq C,
\end{eqnarray} where $C>0$ only depends on $n$ and $\S_0$. 
Let $T$ be sufficiently large such that $u^{T}(\xi, 0) > 0$ for all $\xi\in \C_\theta$.  From \eqref{eq-rewrite-u-T-at-time-T} and Proposition \ref{lem entropy}, we have  \begin{eqnarray}\label{uT}
    u^{T}(\xi, T)>0, \quad\quad {\rm{for~all}}~\xi\in \C_{\theta}.
\end{eqnarray} 
Next, we \textbf{claim:} $$u^{T}(\xi, t)>0, ~~\text{for~ all}~(\xi,t)\in  \C_{\theta}\times[0, T].$$ 
We argue by contradiction. Suppose the claim is false, then there exists a point $\xi_{0}\in \C_{\theta}$ and $t_0\in (0, T)$ such that $u^{T}(\xi_{0}, t_{0})= 0$. %The Neumann boundary condition in \eqref{uT equ} implies that $\xi_0\in \C_\theta \setminus \p \C_\theta$. 
By the first equation in \eqref{uT equ},  we have $\partial_{t}u^{T}(\xi_{0}, t_{0})<0$, then there exists a small constant $\delta>0$ and for all $t\in (t_{0}, t_{0}+\delta]$, we have $u^{T}(\xi_{0}, t)<0$. Repeating the same process at $t=t_0+\delta$, we iterative deduce that $u^{T}(\xi_{0}, T)<0$, which contradicts  with \eqref{uT}. Hence, the claim is true. 

Following the similar argument as in the proof of Proposition \ref{prop-mono}, for all $0\leq t\leq T$, we deduce   
\begin{eqnarray*}
    \frac{d}{dt}\left( \int_{\C_{\theta}}\log u^{T}(\xi, t)\ell(\xi) d\s\right)=- \int_{\C_{\theta}}\left(\sqrt{\frac{K(\xi, t)}{u^{T}(\xi,t)}}-\sqrt{\frac{u^{T}(\xi,t)}{K(\xi,t)}}\right)^{2}\ell(\xi) d\s,
\end{eqnarray*} by integrating over $t\in [0, T]$ and taking \eqref{eq-rewrite-u-T-at-time-T} into account yield
\begin{eqnarray}
  &&  \int_{\C_{\theta}}(\log u^{T}(\xi,0))\ell (\xi)d\s-\o_\theta \mathcal{E}_{\theta}(\widehat{\S}_T)=\int_{0}^{T}\int_{\C_{\theta}}\left(\sqrt{\frac{K(\xi, t)}{u^{T}(\xi,t)}}-\sqrt{\frac{u^{T}(\xi,t)}{K(\xi,t)}}\right)^{2}\ell(\xi) d\s dt \notag\\
    &\geq & \int_{0}^{T_{1}}\int_{\C_{\theta}}\left(\sqrt{\frac{K(\xi, t)}{u^{T}(\xi,t)}}-\sqrt{\frac{u^{T}(\xi,t)}{K(\xi,t)}}\right)^{2}\ell(\xi) d\s dt.    \label{ineq-estimate-on-entropy-refined}\end{eqnarray}
 Due to \eqref{aT}, when $T\to +\infty$, there holds 
 $$u^{T}(\xi, t)\to u(\xi, t) ~\text{ uniformly  for all } ~(t, \xi)\in [0,T_{1}]\times \C_{\theta}, $$ 
together with by letting $T\rightarrow +\infty$ in \eqref{ineq-estimate-on-entropy-refined}, we obtain
\begin{eqnarray*}
    \int_{\C_{\theta}}(\log u(\xi, 0))\ell(\xi) d\s- \o_\theta \mathcal{E}_{\infty}\geq  \int_{0}^{T_{1}}\int_{\C_{\theta}}\left(\sqrt{\frac{K(\xi, t)}{u(\xi,t)}}-\sqrt{\frac{u(\xi,t)}{K(\xi,t)}}\right)^{2}\ell(\xi) d\s dt.
\end{eqnarray*}
Thus \eqref{limit prop} holds for $t=0$. Changing $t=0$ to another time $t>0$ and letting $T_{1}\rightarrow +\infty$, we can show that \eqref{limit prop} holds for all $t>0$.
\end{proof}

With the preceding preparations in place, we now establish the crucial uniform positive lower bound and $C^0$ estimate for the solution to the flow \eqref{eq-scalar-capillary-support-normalized-GCF}.  A crucial ingredient is to show that for the solution $\S_{t}$ to flow \eqref{GCF-capillary-normalized}, the capillary entropy point $z_{e}(\widehat{\S
}_{t})$ of $\widehat{\S}_{t}$ satisfies
\begin{eqnarray}\label{distance}
    {\rm{dist}}\left(z_{e}(\widehat{\S}_{t}), \S_{t})\right)\geq \delta.
\end{eqnarray}

In order to establish \eqref{distance}, we introduce the notion of a general capillary convex body in $\ol{\RR^{n+1}_+}$, referred to as a \textit{$\theta$-capillary convex body} in Definition \ref{defn-angle}, by removing the smoothness assumption compared to Section \ref{sec2.1}. We denote the class of such bodies by $\G_\theta$, and define the corresponding entropy functional $\mathcal{F}_\theta(K)$ as \eqref{eq-entropy-on-K} for $K\in \G_\theta$, which extends both the class of capillary convex bodies $\K_\theta$ from Section \ref{sec2.1} and the functional $\E_\theta(\wh\S)$ from Section \ref{sec-4.2}, again without requiring smoothness. See Section \ref{sec-6}, and in particular Theorem \ref{thm-lim-Hausdorff-distance}, for further details.

\begin{thm}\label{thm-C0}
 Suppose that $u(\xi,t)$ is a positive solution to flow \eqref{eq-scalar-capillary-support-normalized-GCF} with $\theta \in (0,\pi/2)$  and  ${\rm{Vol}}(\wh\S_0 )={\rm{Vol}}(\widehat{\C_{\theta}})$. Then there exists  positive constants $C=C(n, \Sigma_{0})$ and $T_{0}=T_{0}(\S_{0})$, such that 
    \begin{eqnarray}\label{est-u-two-side-bound}
        \frac{1}{C}\leq u(\xi, t)\leq C, ~~~\text{ for all }\xi \in \C_\theta,~ t\geq T_{0}. \label{C0}
    \end{eqnarray}
\end{thm}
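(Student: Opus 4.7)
The plan is to bound $u$ above and below by exploiting the monotonicity of the capillary entropy (Proposition~\ref{prop-mono}), the control of the capillary radii by entropy (Proposition~\ref{radius}), and the integrability identity of Proposition~\ref{pro-limit}. The upper bound is immediate: since $\mathcal{E}_\theta(\wh\S_t)\le \mathcal{E}_\theta(\wh\S_0)$, estimate~\eqref{capillary outer} gives $\rho_+(\wh\S_t,\theta)\le C$. The normalization pins the origin to the limit point of the unnormalized flow, so the origin lies in $\wh\S_t$ for every $t\ge 0$; hence $h(\xi,t)\le 2\rho_+(\wh\S_t,\theta)\le C$, and $u=h/\ell\le C$ since $\ell\ge 1-\cos\theta>0$ on $\C_\theta$.

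For the lower bound I would establish two ingredients and combine them. \emph{First}, the capillary entropy point $e(t)\coloneqq z_e(\wh\S_t)$ satisfies $e(t)\to o$ as $t\to\infty$. The left-hand side of~\eqref{limit prop} is exactly $\omega_\theta\mathcal{E}_\theta(\wh\S_t,o)$; combining this with $\mathcal{E}_\theta(\wh\S_t,o)\le \mathcal{E}_\theta(\wh\S_t)\searrow \mathcal{E}_\infty$ squeezes $\mathcal{E}_\theta(\wh\S_t,o)\to \mathcal{E}_\infty$, so $\mathcal{E}_\theta(\wh\S_t,e(t))-\mathcal{E}_\theta(\wh\S_t,o)\to 0$; inserting this into the stability inequality~\eqref{ineq-stability-estimate} with $z=o$, together with boundedness of $\mathcal{E}_\theta(\wh\S_t)$, forces $|o-e(t)|\to 0$. \emph{Second}, the key claim~\eqref{distance}, i.e.\ ${\rm dist}(e(t),\S_t)\ge \delta$ for some uniform $\delta>0$. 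Granted both, ${\rm dist}(o,\S_t)\ge \delta/2$ for all $t\ge T_0$, which, combined with the uniform bound $\ell\ge 1-\cos\theta$, yields the desired $u(\xi,t)\ge C^{-1}$.

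To establish~\eqref{distance} I would argue by contradiction. If ${\rm dist}(e(t_k),\S_{t_k})\to 0$ along some subsequence $t_k\to\infty$, Proposition~\ref{radius} supplies uniform upper and lower bounds on $\rho_\pm(\wh\S_{t_k},\theta)$, so the family $\{\wh\S_{t_k}\}$ is Hausdorff-precompact in the class $\G_\theta$ of $\theta$-capillary convex bodies introduced in Section~\ref{sec-6}. After extracting a subsequence, $\wh\S_{t_k}\to \wh\S_\infty\in \G_\theta$ in the Hausdorff metric and $e(t_k)\to e_\infty\in \p\S_\infty$. The continuity of the extended capillary entropy functional $\mathcal{F}_\theta$ on $\G_\theta$ developed there yields $\mathcal{F}_\theta(\wh\S_\infty)=\mathcal{F}_\theta(\wh\S_\infty,e_\infty)=\mathcal{E}_\infty$, so $e_\infty$ would be an entropy maximizer of $\wh\S_\infty$; but the extension of Proposition~\ref{lem entropy} to $\G_\theta$ forces every such maximizer to lie in the relative interior of $\wh{\p\S_\infty}$, away from $\p\S_\infty$, contradicting $e_\infty\in\p\S_\infty$.

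The main obstacle is thus the proof of~\eqref{distance}: both the continuity (not just upper semicontinuity) of $\mathcal{F}_\theta$ under Hausdorff convergence with uniform $\rho_\pm$-bounds, and the boundary-avoidance property of entropy maximizers in the possibly non-smooth class $\G_\theta$, are non-trivial; they are precisely what the extension of $\mathcal{K}_\theta$ to $\G_\theta$ in Section~\ref{sec-6} is designed to provide. A more quantitative alternative would be to combine the lower bound $\mathcal{E}_\theta(\wh\S_t)\ge \log(c_0)$ coming from the uniform inner cap with a direct contribution analysis of $\int\log u_{e(t)}\,\ell\,d\sigma$ near a putative near-tangency of $e(t)$ to $\S_t$, but either route funnels through the same structural difficulty.
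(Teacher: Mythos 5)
Your proposal is correct and follows essentially the same route as the paper: the upper bound via entropy monotonicity and Proposition~\ref{radius}, and the lower bound by squeezing $\E_\theta(\wh\S_t,o)$ between $\E_\infty$ (from Proposition~\ref{pro-limit}) and $\E_\theta(\wh\S_t)$, feeding this into the stability estimate \eqref{ineq-stability-estimate} to get $z_e(\wh\S_t)\to o$, and then invoking the uniform separation ${\rm dist}(z_e(\wh\S_t),\S_t)\ge\delta$. The "main obstacle" you identify is exactly Theorem~\ref{thm-lim-Hausdorff-distance}(2), which the paper proves in the appendix by the same compactness-in-$\G_\theta$ argument you sketch.
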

\begin{proof}
The uniform upper bound of $u(\xi, t)$ follows from Proposition \ref{radius}, Proposition \ref{prop-mono}, and \eqref{double-control}. Hence, it remains to establish a positive uniform lower bound for the solution.
 By Proposition \ref{pro-limit}, we obtain
 \begin{eqnarray*}
     \mathcal{E}_{\infty}\leq \frac{1}{\omega_{\theta}}\int_{\C_{\theta}}\left(\log u(\xi,t)\right)\ell(\xi) d\s\leq \mathcal{E}_{\theta}(\wh\S_t ), \quad \forall~ t\geq 0,
 \end{eqnarray*}
 that is,
 \begin{eqnarray}\label{t-limit}
     0\leq \mathcal{E}_{\theta}(\wh\S_t)-\frac{1}{\omega_{\theta}}\int_{\C_{\theta}}\left(\log u(\xi, t)\right)\ell(\xi) d\s \rightarrow 0, \quad {\rm{as}}~t\rightarrow +\infty, \end{eqnarray}
together with \eqref{ineq-stability-estimate} and \eqref{entropy upper}, it follows that the capillary entropy point $z_e(\wh\S_t)$ of $\wh\S_t$ satisfies
\begin{eqnarray*}
    |z_e(\wh\S_t)-0|\to 0, ~~~\text{ as } t\to +\infty.
\end{eqnarray*} In view of Theorem \ref{thm-lim-Hausdorff-distance} (2), it follows that  there exists a sufficiently large constant $T_{0}=T_{0}(\S_{0})>0$ such that  
\begin{eqnarray*}
    \text{dist}(0, \S_t) \geq \varepsilon_0, ~~~~\text{for all } t\geq T_0,
\end{eqnarray*}where $\varepsilon_0$ is a positive constant, depending only on $n$ and $\S_0$. Thus, in turn, implies that there exists a constant $C=C(n,\S_0)>0$ such that $$u(\xi, t) \geq \frac{1}{C}, ~~~\text{ for all }\xi \in \C_\theta,~ t \geq T_0.$$ This completes the proof.
\end{proof}

\subsection{Curvature estimates}
In this subsection, first, we will derive evolution equations for the capillary support function $u$ and Gauss curvature $K$ along the normalized flow \eqref{GCF-capillary-normalized}. Subsequently, we establish the uniform estimates for the solution of the normalized flow \eqref{GCF-capillary-normalized} when $\theta\in (0,\pi/2)$.  As in Section \ref{sec3}, we remark that the two-sided estimates for the Gauss curvature of the solution to flow \eqref{GCF-capillary-normalized} hold for the whole contact angle $\theta \in (0, \pi)$. Finally, by combining with all the known results, we complete the proof of Theorem \ref{GCF-normal to soliton}.

\begin{prop}
    Along flow \eqref{Support-Normalized},  we have the evolution equations.
    \begin{enumerate}
        \item  The capillary support function $u$ satisfies
        \begin{eqnarray}\label{eq-Lu}
            \mathcal{L} u=2  K a^{ij}\n_{i}u \n_{j}\ell-(n+1) K+  KH u+u,
        \end{eqnarray}
   and   % on $\partial\C_{\theta}$,  
        \begin{eqnarray}\label{eq-Neumann-u}
            \n_{\mu}u=0, ~~\text{ on } \partial\C_{\theta}.
        \end{eqnarray}
        \item The Gauss curvature $K$ satisfies
        \begin{eqnarray}\label{eq-LK}
            \mathcal{L}K=2 Ka^{ij}\n_{i}K\n_{j}\ell+ K^{2}H-n K,
        \end{eqnarray}
and % on $\partial\C_{\theta}$,  
    \begin{eqnarray}\label{eq-Neumann-K}
        \n_{\mu}K=0,  ~~\text{ on } \partial\C_{\theta}.
    \end{eqnarray}        
    \end{enumerate}
\end{prop}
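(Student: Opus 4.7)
The plan is to adapt the computations from Proposition \ref{evo-h} and Proposition \ref{evo of K} (which handle the unnormalized flow) by carefully tracking the extra term arising from the rescaling. Recall that the normalized scalar equation \eqref{Support-Normalized} differs from \eqref{support fun eq. of capillary alpha-GCF} only by the additional term $h$ in $\partial_t h$, namely $\partial_t h = h - \ell K$ in place of $\partial_t h = -\ell K$. Since the linearized operator $\mathcal{L}$ involves no zeroth-order terms, this extra $h$ in the time derivative should propagate linearly through the identities for $u$ and $K$.

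First I will compute $\mathcal{L}u$. Writing $u = h/\ell$, one has
\begin{eqnarray*}
\mathcal{L} u \;=\; \frac{\mathcal{L} h}{\ell} + \ell K\, a^{ij}\!\left( \tfrac{2 \nabla_i h\, \nabla_j \ell}{\ell^2} + \tfrac{h\, \nabla_{ij}\ell}{\ell^2} - \tfrac{2 h\, \nabla_i \ell\, \nabla_j \ell}{\ell^3}\right),
\end{eqnarray*}
as in \eqref{h bar evo GCF 2}. For $\mathcal{L} h$ under the normalized flow \eqref{Support-Normalized}, direct substitution gives
\begin{eqnarray*}
\mathcal{L} h \;=\; h - \ell K - \ell K\, a^{ij}(a_{ij} - h \delta_{ij}) \;=\; h + K\ell(hH - n - 1),
\end{eqnarray*}
which is the analogue of \eqref{h evolution GCF 1} with an extra $h$. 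Feeding this into the identity above and using \eqref{ell ij} exactly as before yields \eqref{eq-Lu} with the additional $+u$ term.

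Next I will derive \eqref{eq-LK}. Using $K = [\det(\nabla^2 h + h\sigma)]^{-1}$ and $\partial_t a_{ij} = \nabla_{ij}(\partial_t h) + (\partial_t h)\delta_{ij}$, one has
\begin{eqnarray*}
\partial_t K \;=\; -K a^{ij}\bigl(\nabla_{ij}(\partial_t h) + (\partial_t h)\delta_{ij}\bigr).
\end{eqnarray*}
Now substitute $\partial_t h = h - \ell K$. The $-\ell K$ part reproduces exactly the computation in the proof of Proposition \ref{evo of K} and gives $\ell K a^{ij}\nabla_{ij}K + 2Ka^{ij}\nabla_i K \nabla_j \ell + K^2 H$ via \eqref{ell ij}. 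The additional $h$ contributes $-K a^{ij}(\nabla_{ij}h + h\delta_{ij}) = -K a^{ij}a_{ij} = -nK$. Combining and moving $\ell K a^{ij}\nabla_{ij}K$ to the left-hand side produces \eqref{eq-LK}.

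For the Neumann boundary conditions \eqref{eq-Neumann-u} and \eqref{eq-Neumann-K}, the arguments from Proposition \ref{evo-h} and Proposition \ref{evo of K} carry over verbatim. The Robin condition $\nabla_\mu h = \cot\theta\, h$ is preserved along $\partial \C_\theta$ (second line of \eqref{Support-Normalized}), and combined with \eqref{robin of xi,e} (which says $\ell$ satisfies the same Robin identity) one gets $\nabla_\mu u = \nabla_\mu(h/\ell) = 0$. Differentiating $\nabla_\mu h = \cot\theta\, h$ in $t$ gives $\nabla_\mu(\partial_t h) = \cot\theta \,\partial_t h$; plugging $\partial_t h = h - \ell K$ and using $\nabla_\mu h = \cot\theta h$ together with $\nabla_\mu \ell = \cot\theta \ell$ then forces $\nabla_\mu K = 0$ on $\partial \C_\theta$. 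I do not expect any genuine obstacle here: the only subtlety is the bookkeeping of the extra zeroth-order terms generated by normalization, and confirming that they do not disrupt the cancellation on the boundary, which is guaranteed because the additional $h$ in $\partial_t h$ satisfies precisely the same Robin identity as the original $-\ell K$.
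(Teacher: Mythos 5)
Your proposal is correct and follows exactly the route the paper intends: the paper omits this proof, stating only that it is analogous to Propositions \ref{evo-h} and \ref{evo of K}, and your computation carries out precisely that adaptation, correctly tracking the extra zeroth-order term $h$ from the normalization (yielding the additional $+u$ in \eqref{eq-Lu} and $-nK$ in \eqref{eq-LK}) and verifying that the Neumann conditions survive because $h$ satisfies the same Robin identity as $\ell K$ on $\partial\C_\theta$.
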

\begin{proof}
The proofs are similar to those of Proposition~\ref{evo-h} and Proposition~\ref{evo of K}, respectively, and are therefore omitted.
\end{proof}

First, we obtain an upper bound of $K$.
\begin{prop}\label{prop-F-lower}
Let $\S_t$ be the solution of flow \eqref{GCF-capillary-normalized} with $\theta \in (0,\pi/2)$.  If ${\rm{Vol}}(\wh\S_0 )={\rm{Vol}}(\widehat{\C_{\theta}})$, then there exists a positive constant $C$, depending only on $n$ and $\S_{0}$, such that
    \begin{eqnarray}\label{K-upper}
        K(\xi,t)\leq C, ~~\text{ for all } (\xi,t)\in \C_\theta\times[0, +\infty).
    \end{eqnarray}
\end{prop}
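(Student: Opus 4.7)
The plan is to mimic the proof of Proposition \ref{upper bound of K} by introducing a Tso-type test function, adapted to the normalized flow. Compared to Section \ref{sec3}, the new features are: (i) the evolution equations \eqref{eq-LK} and \eqref{eq-Lu} carry additional linear terms $-nK$ and $+u$ arising from the normalization, and (ii) we no longer have an a priori positive lower bound on the capillary inner radius. The extra terms in (i) turn out to be harmless since they contribute with the favorable sign at a maximum point; the difficulty in (ii) is precisely what Theorem \ref{thm-C0} is designed to resolve.

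Invoking Theorem \ref{thm-C0}, fix $T_{0}=T_{0}(\S_{0})>0$ and set $c_{0}\coloneqq \tfrac12 \inf_{\C_\theta\times[T_{0},+\infty)} u(\xi,t) > 0$. On $\C_\theta\times[T_0,+\infty)$ consider the auxiliary function $\varphi \coloneqq K/(u-c_0)$, which is well-defined because $u-c_0\ge c_0$ there. Writing $K=\varphi(u-c_0)$ and substituting \eqref{eq-LK} and \eqref{eq-Lu} into the product-rule identity
\begin{equation*}
\L K = (u-c_0)\L\varphi + \varphi\,\L u - 2\ell K\,a^{ij}\n_i\varphi\,\n_j u,
\end{equation*}
a direct computation, in which the anisotropic drift $2K\varphi\,a^{ij}\n_i u\,\n_j\ell$ appearing in both $\L K$ and $\varphi\,\L u$ cancels, yields
\begin{equation*}
\L\varphi = 2K a^{ij}\n_i\varphi\,\n_j\ell + 2\ell\varphi\,a^{ij}\n_i\varphi\,\n_j u - c_0\varphi^{2}H + (n+1)\varphi^{2} - n\varphi - \frac{\varphi\,u}{u-c_0}.
\end{equation*}
The Neumann identities \eqref{eq-Neumann-K} and \eqref{eq-Neumann-u} imply $\n_\mu\varphi = 0$ on $\p\C_\theta$, so the Hopf lemma rules out a spatial boundary maximum; hence on $\C_\theta\times[T_0,T]$ for any $T\ge T_0$ the maximum of $\varphi$ is attained either at an interior spatial point with $t>T_0$ or at $t=T_0$.

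At an interior spatial maximum, the gradient terms vanish and $\L\varphi\ge 0$. Using $H\ge n K^{1/n} = n[\varphi(u-c_0)]^{1/n}\ge n c_0^{1/n}\varphi^{1/n}$ and discarding the two nonpositive terms $-n\varphi$ and $-\varphi u/(u-c_0)$, one obtains
\begin{equation*}
0 \le -n c_0^{1+1/n}\varphi^{2+1/n} + (n+1)\varphi^{2},
\end{equation*}
which forces $\varphi\le \bigl[(n+1)/(n c_0^{1+1/n})\bigr]^{n}$. At $t=T_0$, the bound $\varphi(\cdot,T_0)\le K(\cdot,T_0)/c_0$ combined with short-time parabolic regularity for \eqref{Support-Normalized} on the compact interval $[0,T_0]$ gives a bound on $\varphi(\cdot,T_0)$ depending only on $\S_0$. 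Combining these two bounds and writing $K = \varphi(u-c_0)\le \varphi\, u$, the upper bound on $u$ from Theorem \ref{thm-C0} yields \eqref{K-upper} on $[T_0,+\infty)$; on $[0,T_0]$ the bound is immediate from short-time parabolic regularity.

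The main obstacle is the bookkeeping in the computation of $\L\varphi$: one needs the cancellation of the drift $2K\varphi\,a^{ij}\n_i u\,\n_j\ell$ and, crucially, the $H$-terms must combine to give $-c_0\varphi^{2}H$ rather than $+\varphi K H$, since only the former sign permits closing the argument via $H\ge nK^{1/n}$. The secondary nuisance is that Theorem \ref{thm-C0} supplies the lower bound on $u$ only for $t\ge T_0$, forcing a separate short-time treatment on $[0,T_0]$.
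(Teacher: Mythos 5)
Your proof is correct, but it takes a genuinely different route from the paper. The paper does not run a maximum principle on the normalized flow at all: it goes back to the un-normalized flow $\wt X(\cdot,\tau)$ on $[0,T]$, applies the Tso estimate \eqref{K-upper-bound} from Proposition \ref{upper bound of K} with $c_0=\tfrac12\rho_-(\wh{\wt\S}_T,\theta)$, and then transfers the bound to the rescaled hypersurface; the resulting quotients $\rho_+(\wh\S_t,\theta)/\rho_-(\wh\S_t,\theta)^{k}$ are controlled by Proposition \ref{radius} together with the entropy monotonicity (Proposition \ref{prop-mono}) and volume preservation (Proposition \ref{volume-preserving}). That argument yields \eqref{K-upper} on all of $[0,+\infty)$ in one stroke and, notably, does not invoke Theorem \ref{thm-C0}. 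Your argument instead redoes the Tso computation directly for the normalized equations \eqref{eq-Lu}--\eqref{eq-LK}; I checked the algebra and your identity for $\L\varphi$ is exact (the drift terms $2K\varphi a^{ij}\n_iu\n_j\ell/(u-c_0)$ do cancel, and $\varphi KH-\varphi^2Hu=-c_0\varphi^2H$ as claimed), the extra normalization terms $-n\varphi$ and $-\varphi u/(u-c_0)$ indeed have the favorable sign, and the boundary condition $\n_\mu\varphi=0$ is correct. Since Theorem \ref{thm-C0} is established before this proposition and independently of it, there is no circularity; the price you pay is that the lower bound on $u$ is only available for $t\ge T_0$, forcing the (harmless but necessary) separate treatment of the compact interval $[0,T_0]$, where boundedness of $K$ follows from smoothness of the solution there. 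In short: the paper's proof is a rescaling of the Section \ref{sec3} estimate and needs only the radius--entropy inequalities, while yours is a self-contained maximum-principle argument on the normalized flow that leans on the $C^0$ theorem; both are valid.
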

\begin{proof}
Let $T\in [0, T^{\ast})$ and $\widetilde{X}(\xi, \tau)$ be the solution to flow \eqref{GCF-capillary-2} in $[0, T]$. For all $\tau \in [0, T]$, let $\rho_{-}(\widehat{\widetilde \S}_{\tau}, \theta)$ and $\rho_{+}(\widehat{\widetilde\S}_{\tau}, \theta)$ be the capillary inner and outer radius of $\widehat{\widetilde{\S}}_{\tau}$ respectively.  For brevity, we abbreviate $\rho_{-}(\widehat{\widetilde\S}_{\tau}, \theta)$ as $\rho_{1}(\tau)$ and $\rho_+(\widehat{\widetilde\S}_{\tau}, \theta)$ as $\rho_{2}(\tau)$.  From Theorem \ref{main-existence}, there exists a point $\widetilde z_{0}\in \ov{\RR^{n+1}_{+}}$ such that
\begin{eqnarray*}
   \widehat{\C_{\rho_{1}(T), \theta}(\widetilde z_{0})}\subset \widehat{\widetilde\S}_{\tau}, \quad {\rm{for~all}}~\tau \in [0, T].
\end{eqnarray*}
Picking $c_{0}=\frac{1}{2}\rho_{1}(T)$ in \eqref{K-upper-bound}, we obtain 
\begin{eqnarray*}
    \widetilde{K}(\xi, T)\leq \max\left\{ 2^{n+1}\left(\frac{n+1}{n}\right)^{n}\frac{ \widetilde u_{\widetilde z_{0}}(\xi,T)
    }{ \rho_{1}(T)^{1+n}}, \frac{ \widetilde{u}_{\widetilde z_{0}}(\xi, T)\widetilde{K}(\xi, 0)}{\rho_{1}(T)}
   \right \}.
\end{eqnarray*}
where $\widetilde{u}_{\widetilde z_{0}}(\xi, T)$ is the capillary support function of $\widehat{\widetilde{\S}}_{T}$ with respect to the point $\widetilde z_{0}$.  Denote $t\coloneqq \frac{1}{n+1}\log \left(\frac{{\rm{Vol}}(\widehat{\C_{\theta}})}{{\rm{Vol}}(\widehat{\widetilde\S}_{T})}\right)$.   
In view of the rescaling, we obtain 
\begin{eqnarray}\label{upper-K}
    K(\xi, t)&\leq& \max\left\{ 2^{n+1}\left(\frac{n+1}{n}\right)^{n} \frac{2\rho_{+}(\widehat{\S}_{t}, \theta)}{\rho_{-}(\widehat{\S}_{t}, \theta)^{1+n}}, \frac{2\rho_{+}(\widehat{\S}_{t}, \theta)}{\rho_{-}(\widehat{\S}_{t}, \theta)} K(\cdot, 0)
    \right\}.
  %  &=&\frac{2\rho_{+}(\widehat{\S}_{t}, \theta)}{\rho_{-}(\widehat{\S}_{t}, \theta)}\cdot
%\max\left\{ 2^{n+1}\left(\frac{n+1}{n}\right)^{n}, K(\xi, 0)
%   \right\}.
\end{eqnarray}
Combining Proposition \ref{radius}, Proposition \ref{prop-mono} and Proposition \ref{volume-preserving}, we have 
\begin{eqnarray}\label{upper-ratio}
    \frac{\rho_{+}(\widehat{\S}_{t}, \theta)}{\rho_{-}(\widehat{\S}_{t}, \theta)}\leq \frac{C^{2}e^{(n+1)\E_{\theta}(\widehat{\S}_{t})}}{ {\rm{Vol}}(\widehat{\S}_{t})}\leq \frac{C^{2}e^{(n+1)\E_{\theta}(\widehat{\S}_{0})}}{{\rm{Vol}(\widehat{\C_{\theta}})}},
\end{eqnarray}
and 
\begin{eqnarray*}
    \frac{\rho_{+}(\widehat{\S}_{t}, \theta)}{\rho_{-}(\widehat{\S}_{t}, \theta)^{n+1}}\leq \frac{C^{n+2} e^{\left(1+n(n+1)\right)\E_{\theta}(\widehat\S_{0})}}{{\rm{Vol}}(\widehat{\C_{\theta}})^{n+1}},
\end{eqnarray*}
together with \eqref{upper-K} and  \eqref{upper-ratio}, we complete the proof of \eqref{K-upper}.

% Let $\rho_{1}$ and $\rho_{2}$ be two positive constants that satisfy 
%\begin{eqnarray}\label{initial}
 %   \rho_{1}\leq \rho_{-}(\widehat{\widetilde\S}_{\tau}, \theta)\leq \rho_{+}(\widehat{\widetilde\S}_{\tau}, \theta)\leq \rho_{2}, \quad {\rm{for~all}}~\tau\in [0,T]. 
%\end{eqnarray}
%Taking $c_{0}=\frac{1}{2}\rho_{1}$ in  \eqref{K-upper-bound}, we obtain 
%\begin{eqnarray}\label{widetilde K}
%    \widetilde{K}(\cdot, \tau)\leq \max\left\{  \left(\frac{n+1}{n} \right)^{n}\frac{\rho_{2}}{\rho_{1}^{1+n}}, \frac{2\rho_{2}\widetilde{K}(\cdot, 0)}{\rho_{1}}
 %   \right \},

%\end{eqnarray}
%where $\widetilde{K}(\cdot, \tau)$ is the Gauss curvature of $\widetilde{\S}_{\tau}$. In view of the re-scaling and \eqref{capillary inner}, \eqref{capillary outer}, we conclude that \eqref{K-upper} holds.
\end{proof}

Next, we establish a positive lower bound for $K$ by applying the maximum principle. This approach relies crucially on the lower bound of the capillary support function $u$ provided in Theorem \ref{thm-C0}, along with its homogeneous Neumann boundary condition.

\begin{prop}\label{pro-F-upper}
Let $\S_t$ be the solution of flow \eqref{GCF-capillary-normalized}  with $\theta \in (0,\pi/2)$.  If ${\rm{Vol}}(\wh\S_0 )={\rm{Vol}}(\widehat{\C_{\theta}})$, then there exists a positive constant $C$ depending on $n$ and $\S_{0}$, such that
    \begin{eqnarray}\label{K-lower}
        K(\xi,t)\geq C,~~\text{ for all } (\xi,t)\in \C_\theta\times[T_0, +\infty). 
    \end{eqnarray}
\end{prop}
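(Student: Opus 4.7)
The plan is to follow the test-function strategy outlined in the introduction, namely to consider
\[
P := \log(K u^{\gamma}) = \log K + \gamma \log u
\]
for some fixed $\gamma > n$. The crucial structural property of $P$ is that it satisfies a homogeneous Neumann condition on $\partial \C_\theta$: combining $\n_\mu K = 0$ from \eqref{eq-Neumann-K} with $\n_\mu u = 0$ from \eqref{eq-Neumann-u} immediately gives $\n_\mu P = 0$ on $\partial \C_\theta$. This makes $P$ well-adapted to the parabolic maximum principle in the capillary setting and parallels the auxiliary Tso-type function used in Section \ref{sec3}.

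First, I would compute the evolution equation for $P$ by inserting \eqref{eq-LK} and \eqref{eq-Lu} into
\[
\mathcal{L} P = \frac{\mathcal{L} K}{K} + \gamma \frac{\mathcal{L} u}{u} + \ell K a^{ij} \frac{\n_i K \, \n_j K}{K^2} + \gamma \ell K a^{ij} \frac{\n_i u \, \n_j u}{u^2}.
\]
At any interior spatial critical point of $P$, the relation $\n_i K / K = -\gamma \, \n_i u / u$ causes the two $\n \ell$-drift terms to cancel exactly, while the two quadratic gradient terms combine to $(\gamma^2 + \gamma)\,\ell K \, a^{ij}(\n_i u/u)(\n_j u/u) \ge 0$. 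Using $H \ge n K^{1/n} \ge 0$, I would thus obtain the clean pointwise bound
\[
\mathcal{L} P \;\ge\; -n + \gamma - \frac{\gamma(n+1) K}{u} \qquad \text{at any interior spatial critical point}.
\]

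Next, I would consider $P_\ast := \inf_{[T_0, \infty) \times \C_\theta} P$ and apply the parabolic maximum principle. If $P_\ast$ is attained at some $(\xi_\ast, t_\ast)$ with $t_\ast > T_0$ and $\xi_\ast \in \mathrm{int}(\C_\theta)$, then $\mathcal{L} P(\xi_\ast, t_\ast) \le 0$, which combined with the estimate above and the uniform lower bound $u \ge c_1 > 0$ of Theorem \ref{thm-C0} forces $K(\xi_\ast, t_\ast) \ge c_1 (\gamma - n)/(\gamma(n+1)) > 0$ and hence an explicit lower bound on $P_\ast$. If $P_\ast$ is attained on $\partial \C_\theta$ at some $t_\ast > T_0$, then Hopf's lemma applied to the strictly parabolic operator $\mathcal{L}$ (uniformly parabolic near $\xi_\ast$ thanks to the upper bound $K \le C$ of Proposition \ref{prop-F-lower} and local parabolic regularity) would force $\n_\mu P(\xi_\ast, t_\ast) < 0$, contradicting the homogeneous Neumann condition. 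Finally, if the infimum is attained at $t = T_0$, smoothness of the flow on the compact interval $[0, T_0]$ yields a finite lower bound on $P(\cdot, T_0)$ directly. Combining the three cases with the upper bound $u \le c_2$ of Theorem \ref{thm-C0} gives $K = e^P u^{-\gamma} \ge C > 0$ for all $t \ge T_0$.

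The main obstacle, and what makes $P = \log(K u^\gamma)$ the right choice, is the simultaneous requirement that the $\n \ell$-drift terms cancel at interior critical points \emph{and} that the boundary condition reduces to a homogeneous Neumann one: without the first cancellation, the non-homogeneous $\n \ell$ contributions would prevent a clean maximum principle argument, while without the Neumann property the boundary minima could not be excluded. A secondary technical point is verifying that Hopf's lemma is genuinely applicable at boundary minima, which requires local uniform parabolicity of $\mathcal{L}$; this follows from Proposition \ref{prop-F-lower} together with interior parabolic regularity.
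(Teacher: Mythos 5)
Your proposal is correct and follows essentially the same route as the paper: the same test function $P=\log(Ku^{\gamma})$, the same observation that $\n_\mu P=0$ on $\p\C_\theta$, the same cancellation of the $\n\ell$-drift terms at an interior critical point via $\n_i K/K=-\gamma\n_i u/u$, and the same conclusion from Theorem \ref{thm-C0} (the paper takes $\gamma=n+2$). Your explicit three-case treatment of where the infimum is attained (interior, boundary via Hopf, initial time) is a slightly more careful rendering of the maximum-principle step that the paper compresses into one line, but it is not a different argument.
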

\begin{proof}
 Define
   \begin{eqnarray*}
       P(\xi,t) \coloneqq \log (Ku^{\gamma}), ~~~\text{ for }\xi\in \C_\theta,~~ t\geq  T_0,
   \end{eqnarray*}
   where $\g>0$ is a constant to be determined later.
From \eqref{eq-Neumann-u} and \eqref{eq-Neumann-K}, we have
   \begin{eqnarray}\label{P-neumann}
        \n_{\mu}P=0, \quad  {\rm{on}}~ \partial\C_{\theta}. 
   \end{eqnarray} 
   Then we know $P$ attains its minimum value at some interior point, say $\xi_0\in \C_\theta\setminus\p \C_\theta$. Next, all the calculations are done at the minimum value point $\xi_0$. First,  we have
   \begin{eqnarray}\label{one-deri}
   0=\n_i P=    \frac{\n_i K}{K}+\frac{\gamma \n_{i}u}{u},
   \end{eqnarray}
   and by using Eq. \eqref{eq-Lu}, \eqref{eq-LK}, and  \eqref{one-deri}, it follows that 
   \begin{eqnarray}\label{two-deri}
      0&\geq & \mathcal{L}P\notag  = \frac{1}{K}\mathcal{L}K+\frac{\gamma}{u}\mathcal{L}u+ \ell Ka^{ij} \left(\frac{\n_{i}K\n_{j}K}{K^{2}}+\gamma\frac{\n_{i}u \n_{j}u}{u^{2}} \right)\notag  \\
       &\geq &\frac{1}{K}\mathcal{L}K+\frac{\gamma}{u}\mathcal{L}u\geq (\gamma-n)-\frac{(n+1)\gamma }{u}K.     
   \end{eqnarray}
By choosing $\gamma = 2 + n$ and substituting \eqref{C0} into \eqref{two-deri}, we conclude that
\begin{eqnarray*}
P(\xi, t) \geq P(\xi_0, t) \geq \log\left(\frac{u^{n+3}}{(n+1)(n+2)}\right).
\end{eqnarray*}
Applying \eqref{est-u-two-side-bound} once more, we obtain \eqref{K-lower}.
   \end{proof}

Once two-sided estimates for the Gauss curvature $K$ are established, we obtain corresponding bounds on the principal radii of $\S_t$, a similar result to \cite{And00, AGN} and \cite{GN} for the closed setting. We remark that the contact angle range $\theta < \pi/2$ is crucial to ensure the applicability of the boundary maximum principle.  
\begin{prop}\label{cur-est}
Let $h(\xi, t)$ be the solution of flow \eqref{Support-Normalized} with $\theta \in (0,\pi/2)$. 
 Suppose that ${\rm{Vol}}(\wh\S_0 )={\rm{Vol}}(\widehat{\C_{\theta}})$, then there exist A  positive constant $C$,  depending only on $n$ and $\S_{0}$, such that
    \begin{eqnarray}\label{principal-radius-bi-estimates}
        \frac{1}{C} I\leq (h_{ij}+h\s_{ij})\leq C I, ~~\text{ for all } (\xi,t)\in \C_\theta\times[T_0, +\infty),
    \end{eqnarray}
  where $T_{0}$ is a positive constant given in Theorem \ref{thm-C0}.
\end{prop}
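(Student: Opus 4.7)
The plan is to adapt the argument of Proposition \ref{curvature est} to the normalized flow, centered on controlling the trace of the principal radii
\[
\Phi \coloneqq \Delta h + n h = \sum_{i=1}^n \lambda_i.
\]
Once a uniform upper bound $\Phi \leq C$ is established, \eqref{principal-radius-bi-estimates} will follow for free: each $\lambda_i \leq \Phi \leq C$, and combining with the lower bound $\prod_i \lambda_i = 1/K \geq 1/K_{\max}$ from Proposition \ref{prop-F-lower} forces $\lambda_i \geq 1/(C^{n-1} K_{\max})$. Thus the entire task reduces to bounding $\Phi$ from above on $\C_\theta \times [T_0, +\infty)$.

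The first step is to compute the evolution of $\Phi$ along \eqref{Support-Normalized}. Because $\partial_t h = h - \ell K$ in the normalized flow (rather than $-\ell K$), an extra $+\Phi$ appears in $\partial_t \Phi$; all other terms arising from the commutator identity and the formula for $\Delta K$ are identical to those in Proposition \ref{curvature est}, so one obtains
\[
\mathcal{L}\Phi = \Phi + (n\ell - 1) n K - \ell K \Phi H - 2\nabla\ell \cdot \nabla K - \ell K \bigl(a^{kp}\nabla_i a_{pq} a^{ql}\nabla_i a_{kl} + a^{pq}\nabla_i a_{pq} a^{kl}\nabla_i a_{kl}\bigr).
\]
The boundary estimate $\nabla_\mu \Phi \leq 0$ on $\partial \C_\theta$ for $\theta \in (0,\pi/2)$ is the same as \eqref{boundary of Phi}, since it uses only $\nabla_\mu h = \cot\theta\, h$ from \eqref{Support-Normalized} and $\nabla_\mu K = 0$ from \eqref{eq-Neumann-K}, both of which still hold, together with the positive-definiteness of $(a_{ij})$ and $\cot\theta > 0$. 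Consequently, the space-maximum of $\Phi$ is attained at an interior point at any time.

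The main obstacle will be extracting an a priori bound from the interior maximum principle in the presence of the new $+\Phi$ term. At an interior maximum at time $t^* > T_0$, combining $\mathcal{L}\Phi \geq 0$ with the Cauchy--Schwarz absorption argument from \eqref{Kalpha upper} (which controls the mixed term $-2\nabla\ell \cdot \nabla K$ by the non-positive quadratic gradient terms up to a constant $C(n)$) will yield
\[
\ell K \Phi H \leq \Phi + C_1,
\]
with $C_1$ depending on $n$, $\theta$ and the upper bound of $K$ from Proposition \ref{prop-F-lower}. To dominate the additional $+\Phi$ on the right, I will crucially use the lower bound $K \geq c > 0$ from Proposition \ref{pro-F-upper} together with $\ell \geq 1 - \cos\theta > 0$, and apply AM--GM to the $n-1$ largest principal curvatures: since $\lambda_{\max} \geq \Phi/n$ and $\prod_i \kappa_i = K$, one has $H \geq (n-1)(K\Phi/n)^{1/(n-1)}$ for $n \geq 2$. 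The left-hand side then grows at least like $c\,\Phi^{n/(n-1)}$, which exceeds $\Phi + C_1$ unless $\Phi$ is bounded by a constant depending only on $n$, $\theta$ and $\Sigma_0$. For $n = 1$ the bound $\Phi = 1/K$ is immediate from the two-sided bounds on $K$. In view of the reduction in the first paragraph, this completes \eqref{principal-radius-bi-estimates}, and it is precisely the interplay of the new $+\Phi$ term with the lower bound of $K$ (hence the waiting time $T_0$) and with the positivity of $\ell$ (hence the restriction $\theta < \pi/2$) that constitutes the heart of the argument.
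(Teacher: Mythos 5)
Your proposal is correct and follows essentially the same route as the paper: the same test function $\Phi=\Delta h+nh$, the boundary inequality \eqref{boundary of Phi}, the interior maximum principle with the extra $+\Phi$ term absorbed by the lower bound on $K$, and the gradient terms controlled via \eqref{Kalpha upper}. The only cosmetic difference is that you obtain $H\gtrsim (K\Phi)^{1/(n-1)}$ by discarding the smallest principal curvature and applying AM--GM to the rest, whereas the paper invokes the Newton--Maclaurin inequality $\sigma_{n-1}/\sigma_n\ge n\bigl(\sigma_1/(n\sigma_n)\bigr)^{1/(n-1)}$ directly; both yield the same conclusion.
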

\begin{proof}
  Since $\n^{2}h+h\s$ is positive definite, and the upper and lower bound of $K$ implies that it suffices to prove the upper bound of the eigenvalues of $\n^{2}h+h\s$.   Similar to the proof of Proposition \ref{curvature est}, we consider the function 
    \begin{eqnarray*}
        \Phi\coloneqq \Delta h+n h.
    \end{eqnarray*}
Recall from \eqref{boundary of Phi}, there holds  $$\n_{\mu}\Phi
    \leq 0 \text{ on } \partial\C_{\theta}.$$ Thus, we know that $\Phi$ attains its maximum value within $\C_\theta$ at some interior point, say $\xi_{0}\in \C_{\theta}\setminus \partial \C_{\theta}$. By performing calculations similar to those in Proposition \ref{curvature est}, at $\xi_0$, we have
    \begin{eqnarray}
      0\leq  \mathcal{L}\Phi&=&\Phi-\ell K\Phi H- \left[(n\ell-1) nK 
     - \ell K a^{kp} \n_{i}a_{pq} a^{ql}\n_{i}a_{kl} \right.\notag  \\
     &&\left.- \ell K a^{pq}\n_{i}a_{pq}a^{kl}\n_{i}a_{kl} -2  \n\ell \cdot \n K\right].~~~~~~~~~~\label{Phi evo}
    \end{eqnarray}
Together with \eqref{Kalpha upper} and  \eqref{K-upper}, we get
  \begin{eqnarray}
 &&  K \left[ -  \ell \sum_{k,l,i} a^{kk}a^{ll}(\n_{i}a_{kl})^2- \a \ell  \sum_i \left(\sum_k a^{kk}\n_{i}a_{kk} \right)^2-2\cos\theta \<e_i,e\> a^{kk} \n_{i}a_{kk} \right] \notag  \\&& \leq  C K \leq C.~~~\label{Kalpha-2}
\end{eqnarray}
By the Newton-Maclaurin inequality, we obtain
\begin{eqnarray}\label{H-low bound}
   H= \frac{\sigma_{n-1}(A)}{\sigma_{n}(A)}\geq n\left(\frac{\sigma_{1}(A)}{n\sigma_{n}(A)}\right)^{\frac{1}{n-1}}=n^{\frac{n-2}{n-1}}K^{\frac{1}{n-1}}\Phi^{\frac{1}{n-1}}. 
\end{eqnarray}
Inserting \eqref{Kalpha-2} and \eqref{H-low bound} into  \eqref{Phi evo}, we have 
\begin{eqnarray*}
 0\leq \Phi -n^{\frac{n-2}{n-1}}\ell (K\Phi)^{\frac{n}{n-1}}-(n\ell-1)nK-C,   
\end{eqnarray*}
Using \eqref{K-upper}, \eqref{K-lower} again, we conclude that $$\Phi\leq C,$$ which yields the upper bound in \eqref{principal-radius-bi-estimates} and the lower bound follows together with \eqref{K-lower}. This completes the proof.
\end{proof}

We are now ready to complete the proof of the second part of Theorem~\ref{main-thm}, namely Theorem~\ref{GCF-normal to soliton}.
\subsection{Proof of Theorem \ref{GCF-normal to soliton}}
\begin{proof}%[\textbf{Proof of Theorem \ref{GCF-normal to soliton}}]
In view of Theorem \ref{thm-C0}, Proposition \ref{prop-F-lower}, Proposition \ref{pro-F-upper} and Proposition \ref{cur-est}, we conclude that the solution $u$ of flow \eqref{eq-scalar-capillary-support-normalized-GCF} satisfies 
\begin{eqnarray*}
    \|u(\cdot, t)\|_{C^{2}(\C_{\theta})}\leq C,
\end{eqnarray*}
 where the positive constant $C$ only  depends on $n$ and $\S_{0}$. Thus by the standard parabolic fully nonlinear equation theory with Neumann boundary value condition (see, e.g., \cite[Theorem 6.1, Theorem 6.4 and Theorem 6.5]{Dong}, and also \cite[Theorem 14.23]{Lieberman-book-parabolic}),  for any integer $k\geq 2$ and $\alpha\in (0,1) $, there holds
\begin{eqnarray}\label{est}
    \|u(\cdot, t)\|_{C^{k, \alpha}(\C_{\theta})}\leq C.
\end{eqnarray}
Given any $T>0$, and a number sequence $\{t_{j}\}_{j\geq 1}$ with $t_j\nearrow + \infty$, consider the sequence functions 
\begin{eqnarray}\label{uj}
u_{j}(\xi, t)\coloneqq u(\xi, t+t_{j})=\frac{h(\xi, t+t_j)}{\ell(\xi)}.  
\end{eqnarray}
For each $j\geq 1$, we see that $\{u_{j}\}_{j\geq 1}$ are still the solutions of flow  \eqref{eq-scalar-capillary-support-normalized-GCF} and satisfy  \eqref{est}. This implies $u_{j}(\xi,t)$ (up to a subsequence) converges in $C^{\infty}$ topology to a limit function $u_{\infty}(\xi,t)$ for all $(\xi,t)\in \C_\theta \times [-T,T]$, and $u_{\infty}$  satisfies the flow  \eqref{eq-scalar-capillary-support-normalized-GCF}.  Moreover, Theorem \ref{thm-C0} implies that $u_{\infty}>0$. %,  $\n^2 (\ell u_\infty)+\ell u_\infty \s >0$ in $\C_\theta$ and $\n_\mu u_\infty=0$ on $\p \C_\theta$. 
On the other hand, together with \eqref{t-limit}, it implies that $u_{\infty}$  satisfies 
    \begin{eqnarray*}
     \int_{\C_{\theta}}\left(\log u_{\infty}(\xi,t)\right)\ell(\xi)d\s=\lim\limits_{j\rightarrow +\infty} \int_{\C_{{\theta}}}\left(\log u_{j}(\xi, t)\right)\ell (\xi)d\s=\o_\theta \mathcal{E}_{\infty},
    \end{eqnarray*}
    together with the proof of Proposition \ref{prop-mono}, it yields
    \begin{eqnarray*}
   0=\frac{d}{dt}\left( \int_{\C_{\theta}}(\log u_{\infty}(\xi, t))\ell(\xi) d\s\right)=- \int_{\C_{\theta}}\left(\sqrt{\frac{u_{\infty}(\xi,t)}{K(\xi, t)}}-\sqrt{\frac{K(\xi, t)}{u_{\infty}(\xi, t)}}\right)^{2}\ell(\xi) d\s,
   \end{eqnarray*}
which in turn implies that 
    \begin{eqnarray*}
        \frac{u_{\infty}(\xi, t)}{K(\xi,t)}=\frac{K(\xi, t)}{u_{\infty}(\xi, t)},
    \end{eqnarray*}
combining with the fact that $u_{\infty}(\xi, t)$ satisfies the flow \eqref{eq-scalar-capillary-support-normalized-GCF}, we obtain $$\p_t u_{\infty} (\xi, t)=0.$$ Hence $u_{\infty}(\cdot)\coloneqq u_{\infty}(\cdot, t)$ satisfies 
    \begin{eqnarray*}
    \left\{
\begin{array}{llll}
 u_{\infty} &=&\frac 1 {\det \left(\n^2 (\ell u_\infty)+\ell u _\infty\s \right)}   & \text{ in } ~\C_\theta,\\ 
	\n_\mu u_{\infty} &=& 0 & \text{ on }~ \p \C_\theta, \end{array}  \right.
\end{eqnarray*} which is equivalent to \eqref{eq-soliton-normalized-GCF}, by  taking into account \cite[Proposition 2.6]{MWWX}.  
Therefore, we obtain that after passing to a subsequence, the solution $\{u_j\}_{j\geq 1}$ of \eqref{eq-scalar-capillary-support-normalized-GCF} smoothly converges to a smooth function $u_\infty$, which satisfies Eq. \eqref{eq-soliton-normalized-GCF}. 

Finally, we prove that $u(\xi,t)$ globally converges in the $C^{\infty}$-topology to $u_{\infty}(\xi, t)$ as $t\rightarrow +\infty$. We proceed by contradiction. If not, then there there exist $l\in \mathbb{N}$ and a sequence $\{t_{j}\}_{j\geq 1}$ with $t_j\nearrow + \infty$, such that 
\begin{eqnarray}\label{m-deriv}
    \sup\limits_{\C_{\theta}}|(\n^{l}u)(\xi, t_{j})-(\n^{l}u_{\infty})(\xi)|\geq \tau,
\end{eqnarray}
 for some positive constant $\tau$. On the other hand, we consider the sequence $u_{j}(\xi,t)\coloneqq u(\xi,t+t_j)$ defined in way of \eqref{uj}, we see that $u_{j}(\xi, t)$ (up to a subsequence) converges in $C^{\infty}$ topology to $u_{\infty}(\xi)$ on $\C_{\theta}\times \{0\}$. This implies  $u_{t_{j}}(\xi, 0)$ converges in $C^{\infty}$ topology to $u_{\infty}(\xi)$ on $\C_{\theta}$, which is  a contradiction to \eqref{m-deriv}. Hence, we complete the proof.

\end{proof}

\section{Perspective}\label{sec5}
 In this section, we discuss some generalizations of flow \eqref{c-Gauss curvature flow} to the capillary $\a$-power Gauss curvature flow in $\ol{\RR^{n+1}_+}$. We briefly mention the key steps involved, omitting full details for brevity.
Chow \cite{Chow85} extended Firey's result and also Tso \cite{Tso85} to the $\alpha$-power Gauss curvature flow for convex closed hypersurface in $\RR^{n+1}$:
\begin{eqnarray}\label{alpha-GCF-2}\p_t X= -K^\a\nu,
\end{eqnarray}for $\alpha>0$ and showed that the limiting point is a round point if $\a=\frac 1 n$.  
Thus, the remaining problem for the $\a$-power Gauss curvature flow \eqref{alpha-GCF-2} is to characterize the limiting shape of \eqref{alpha-Gauss curvature flow} for general $\a\neq \frac 1 n$. When $\alpha=\frac{1}{n+2}$, Andrews \cite{And94-1} proved that such a flow will converge to an ellipsoid shape point, see also Sapiro and Tannenbaum \cite{ST}  for the case $n=1$. 
When $n=2$, Andrews \cite{And99} solved Firey's conjecture completely, i.e., when $\alpha=1$, the flow converges to a round point. Andrews-Chen \cite{AC} showed that when $\alpha\in [\frac{1}{2}, 1]$, this conclusion also holds. Their methods highly depend on the geometry of surfaces in $\mathbb{R}^{3}$ and seem not to work in high dimensions.  Subsequently, when $n\geq 2$,  Andrews \cite{And00} proved that if $\alpha\in (\frac{1}{n+2}, \frac{1}{n})$, the normalized flow will converge to a self-similar solution of flow \eqref{alpha-GCF-2}. A closed hypersurface $M$ is said to be a self-similar solution of flow \eqref{alpha-GCF-2} if it satisfies 
\begin{eqnarray}\label{alpha-GCF similar solution}
    K^{\alpha}=\<X, \nu\>.
\end{eqnarray}
If $\a>\frac 1 {n+2}$, the convergence of flow \eqref{alpha-GCF-2} starting from a strictly convex closed hypersurface to the soliton  Eq. \eqref{alpha-GCF similar solution} and the classification of the strictly convex solution to  Eq. \eqref{alpha-GCF similar solution} were settled by Andrews-Guan-Ni \cite{AGN} and Brendle-Choi-Daskalopoulos \cite{BCD}, respectively.

Our method allows a similar result for the $\a$-power Gauss curvature flow of evolving strictly convex capillary hypersurfaces $\S_t\coloneqq X(M,t)\subset\ol{\RR^{n+1}_+}$ if $\a>0$. To be more precise,  we consider that the capillary $\alpha$-power Gauss curvature flow in $\ol{\RR^{n+1}_+}$, which is defined as:
\begin{eqnarray}\label{cap-alpha-Gauss curvature flow}
   \p_t X(x,t)=-K^\a (x,t)\tilde\nu(x,t), ~~~(x,t)\in M\times[0, T'),
\end{eqnarray} and its corresponding normalized capillary $\a$-power Gauss curvature flow: 
\begin{eqnarray}\label{c-alpha-GCF-capillary-normalized}
\left\{
\begin{array}{llll}
\p_t X(\cdot, t) &=&X(\cdot,t)- \frac{(n+1) {\rm{Vol}}(\wh{\C_\theta}) K^\alpha(\cdot, t)}{ \int_{\C_\theta} K^{\a-1}(\xi,t) (\sin^2\theta+\cos\theta\<\xi,e\>) d\xi } \tilde\nu(\cdot,t) ,  &
\hbox{ in } M\times [0,+\infty),\\
\<\tilde\nu ,e\> &=&0
 & \hbox{ on }\partial M\times [0,+\infty),\\
X(\cdot,0)&=& X_0(\cdot) & \hbox{ on } M, 
\end{array} \right.
\end{eqnarray}where $X_0\colon   M\to \ol{\RR^{n+1}_+}$ is a strictly convex capillary hypersurface. As discussed in the introduction for the case $\alpha = 1$, flow \eqref{cap-alpha-Gauss curvature flow} is also closely related to the capillary $L_p$-Minkowski problem with $p = 1-\frac{1}{\alpha}$.  
 
\begin{rem}     
Along flow \eqref{c-alpha-GCF-capillary-normalized}, then the enclosed volume ${\rm{Vol}}(\widehat{\S}_{t})$ is preserved invariant for all $t\geq 0$, if ${\rm{Vol}}(\wh\S_0)={\rm{Vol}}(\wh{\C_\theta})$.
\end{rem}
Motivated by prior works on closed convex hypersurfaces, including Andrews–Guan–Ni \cite{AGN} for $\alpha > \frac{1}{n+2}$, Guan–Ni \cite{GN}, Firey \cite{Firey} for the case $\alpha = 1$, and more recently B\"or\"oczky–Guan \cite[Eq.~(1.6)]{BG} in the anisotropic setting, we introduce the capillary $\alpha$-entropy functional for general $\alpha > 0$.

\begin{defn}[Capillary $\a$-entropy functional]
Let $\wh\S\in \K_\theta$ be a capillary convex body,  the capillary $\a$-entropy functional $\E_{\a,\theta}$ for $\wh\S$ is defined by
\begin{eqnarray}\label{cap-entropy-alpha}
    \mathcal{E}_{\a,\theta}(\wh\S)\coloneqq \sup_{z_0\in {\rm{int}}(\widehat{\partial\S})}    \E_{\a,\theta}(\wh\S,z_0),
\end{eqnarray} and 
\begin{eqnarray*} 
    \E_{\a,\theta}(\wh\S,z_0)\coloneqq \begin{cases}
        \frac{\a}{\a-1} \log\left( \frac 1 {\o_\theta} \int_{\C_\theta} u_{z_0}(\xi)^{1-\frac 1 \a} \ell(\xi) d\sigma \right),~~ &\a\neq 1,\\
        \frac 1 {\omega_\theta}\int_{\C_\theta} (\log u_{z_0}(\xi)) \ell(\xi)  d\s, ~~ &\a=1,
    \end{cases}
\end{eqnarray*}where $d\s$ is the standard area element on $\C_\theta$ and $\omega_\theta$ is the capillary area of $\C_\theta$.  
\end{defn}

Building on the analysis developed in this paper and drawing inspiration from the ideas of Andrews–Guan–Ni \cite{AGN}, with the aid of concept \eqref{cap-entropy-alpha}, we expect to establish the following result in a forthcoming work.

 \begin{thm} Let $\S_0$ be a strictly convex capillary hypersurface in $\ol{\R^{n+1}_+}$ and $\theta\in(0,\pi/2)$. If $\alpha>0$, then the flow \eqref{cap-alpha-Gauss curvature flow} has a smooth solution $\S_t\coloneqq X(M,t)$, which is strictly convex capillary hypersurface for all $t\in[0, T^*)$, where $T^*\coloneqq \frac 1 {\a n+1}\left(\frac{{\rm{Vol}}(\widehat\S_0)}{{\rm{Vol}}(\wh\C_{\theta})} \right)^{\frac{\a n+1}{n+1}}$. As $t\to T^*$, then the capillary hypersurfaces $\S_t$ shrink to a point $p\in \p{\RR^{n+1}_{+}}$.
 Moreover,  if $\a>\frac 1{n+2}$, the solution of normalized flow \eqref{c-alpha-GCF-capillary-normalized} converges to a smooth, strictly convex capillary hypersurface $\S_\infty\subset \ol{\RR^{n+1}_+}$ as  $t\to +\infty$. %, which is a self-similar solution (soliton) to \eqref{GCF-capillary-normalized}. 
 Furthermore, $\S_\infty$ satisfies: \begin{eqnarray}\label{soliton-alpha-GCF}\left\{
\begin{array}{llll}
	K^\a  &=& \lambda \frac{\<X,\nu\> } {1+\cos\theta\<\nu, e\>} & \text{  in } ~~ \S_\infty,\\ 
	\<\nu,e\>&=& -\cos\theta & \text{ on }~~ \p \S_\infty,\end{array} \right.
\end{eqnarray} where $\lambda\coloneqq \frac {1}{(n+1) {\rm{Vol}}(\wh\C_\theta)} \int_{\C_\theta} K^{\a-1}(\xi) \left(\sin^2\theta+\cos\theta\<\xi,e\>\right) d\xi$.
\end{thm}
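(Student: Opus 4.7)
The plan is to follow the two-part strategy already developed for $\alpha=1$, adapting each estimate to the $\alpha$-power setting and invoking the capillary $\alpha$-entropy \eqref{cap-entropy-alpha} in place of $\E_\theta$.

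First, one transforms \eqref{cap-alpha-Gauss curvature flow} into a scalar parabolic Monge--Amp\`ere equation for the support function $h(\xi,t)$ on $\C_\theta$: arguing as in Proposition \ref{prop-scalar equ}, one obtains
\begin{eqnarray*}
\p_t h = -\ell(\xi) [\det(\n^2 h+h\s)]^{-\a}, \qquad \n_\mu h=\cot\theta h \text{ on } \p\C_\theta.
\end{eqnarray*}
Short-time existence follows from the standard Neumann parabolic theory as soon as $\n^2 h+h\s>0$. For finite-time existence, the cap $\C_{r(t),\theta}$ is a solution of \eqref{cap-alpha-Gauss curvature flow} provided $r(t)=[r_0^{\a n+1}-(\a n+1)t]^{1/(\a n+1)}$, and the comparison principle bounds $\S_t$ between concentric caps; combined with a volume identity this identifies $T^*$. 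One then derives the $\a$-analogues of Propositions \ref{evo-h}, \ref{evo of K}: along the flow, $\p_t u=-K^\a+$(lower order), and $K$ satisfies a parabolic equation of the form $\L_\a K \le 2K a^{ij}\n_i K\n_j \ell + \a K^{\a+1}H$, with the Robin/Neumann conditions $\n_\mu h=\cot\theta h$ and $\n_\mu K=0$ being preserved. The key Tso-type test function becomes $\varphi\coloneqq K^\a/(u-c_0)$, which still satisfies the homogeneous Neumann condition \eqref{neumann-tso-fun} on $\p\C_\theta$ and yields an upper bound on $K^\a$ once a positive lower bound on $u$ is available via a controlled capillary inner radius. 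The harmonic-curvature function $\Phi=\Delta h+nh$ again satisfies $\n_\mu \Phi\le 0$ on $\p\C_\theta$ when $\theta\in(0,\pi/2)$, and combining the Newton--Maclaurin inequality $H\ge n K^{1/n}$ with the lower bound of $K$ gives a lower bound of the principal curvatures. This closes the curvature estimates exactly as in Proposition \ref{curvature est}, and the shrinking-to-a-point conclusion follows as in the proof of Theorem~\ref{main-existence}.

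For the second part, one parametrizes the normalized flow \eqref{c-alpha-GCF-capillary-normalized} by its capillary support function and computes the evolution of $\E_{\a,\theta}(\wh\S_t)$. The crucial monotonicity identity, in the spirit of Proposition \ref{prop-mono}, should read
\begin{eqnarray*}
\frac{d}{dt}\E_{\a,\theta}(\wh\S_t,z_e(t)) \le 0,
\end{eqnarray*}
with equality characterizing solitons; for $\a\ne 1$ the proof uses the generalized Jensen/H\"older argument applied to $u^{1-1/\a}$ and the capillary divergence identity $\int_{\C_\theta} K^{\a-1}h \,d\s=\int_{\C_\theta} K^{\a-1}\ell\,d\s$ (the appropriate analogue of \eqref{eq-area-formula-Sigma-t}) together with $\int_{\C_\theta} K^{\a-1}\<z,\xi-\cos\theta e\>\,d\s=0$ for any horizontal vector $z$. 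One then establishes the $\alpha$-analogues of Propositions \ref{lem-low-entroy}--\ref{radius}, where the capillary Blaschke--Santal\'o inequality from Proposition \ref{prop-santalo-ineq} still controls $\E_{\a,\theta}$ from below (after integrating $u_{z_s}^{-n-1}$ via Jensen and comparing exponents), and $\rho_\pm(\wh\S,\theta)$ are bounded in terms of $e^{\pm\E_{\a,\theta}}$ by the same line-segment comparison arguments as in Proposition \ref{radius}. Invoking Theorem \ref{thm-lim-Hausdorff-distance} then yields the uniform $C^0$ bound \eqref{est-u-two-side-bound} for the normalized flow.

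The main obstacle is the uniform lower bound of $K$ for the normalized flow when $\a\ne 1$, which is where the restriction $\a>\tfrac{1}{n+2}$ enters. Following \cite{AGN}, the plan is to consider a test function of the form $P\coloneqq \log(K^\beta u^\gamma)$ for suitably chosen $\beta,\gamma>0$, which again satisfies the homogeneous Neumann condition \eqref{P-neumann} by \eqref{eq-Neumann-u}--\eqref{eq-Neumann-K}. At an interior minimum, the computation $\mathcal{L}_\a P\le 0$ together with $H\ge n K^{1/n}$ gives an inequality of the type
\begin{eqnarray*}
 \beta\a n K^\a + \beta(n - \a(\a n+1)) \le (n+1)\gamma K^{\a-1} u^{-1} + \gamma(\text{l.o.t.}),
\end{eqnarray*}
which can be closed to yield a positive lower bound on $K$ precisely when $\a>\tfrac{1}{n+2}$; one must also exploit the capillary lower bound on $u$ from the $C^0$ estimate. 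The principal-radii bounds then follow from $\Phi=\Delta h+nh$ exactly as in Proposition \ref{cur-est}, using $\n_\mu\Phi\le 0$ on $\p\C_\theta$ for $\theta\in(0,\pi/2)$, together with Newton--Maclaurin and the two-sided bounds on $K$. Finally, a compactness/limiting argument identical to the one in Section~\ref{sec4} extracts a smooth subsequential limit $u_\infty$, and the equality case of the entropy monotonicity forces $u_\infty$ to satisfy $u_\infty = \lambda^{-1} K^\a$, which is equivalent to \eqref{soliton-alpha-GCF} via \cite[Proposition 2.6]{MWWX}; full convergence (rather than subsequential) follows by the standard contradiction argument used in the proof of Theorem \ref{GCF-normal to soliton}.
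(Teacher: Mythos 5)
First, note that the paper itself does not prove this theorem: Section \ref{sec5} only defines the capillary $\a$-entropy \eqref{cap-entropy-alpha} and states that the result is ``expected to be established in a forthcoming work.'' So there is no proof to compare yours against; your roadmap does follow the strategy the paper announces (adapt Sections \ref{sec3}--\ref{sec4} plus the Andrews--Guan--Ni test functions), but as written it contains genuine gaps at exactly the points where $\a\neq 1$ changes the analysis.

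The most concrete problem is the identification of $T^*$. For $\a=1$ the volume decreases at the constant rate $\int_{\C_\theta}\ell\,d\s=(n+1)\mathrm{Vol}(\wh{\C_\theta})$, which is what pins down $T^*$ in Theorem \ref{main-existence}. For $\a\neq 1$ one has $\frac{d}{dt}\mathrm{Vol}(\wh\S_t)=-\int_{\C_\theta}K^{\a-1}\ell\,d\s$, which is \emph{not} a function of $\mathrm{Vol}(\wh\S_t)$ alone, so ``a volume identity'' does not identify $T^*$; the cap comparison only gives two-sided bounds on the extinction time, not the exact value $\frac{1}{\a n+1}\bigl(\mathrm{Vol}(\wh\S_0)/\mathrm{Vol}(\wh{\C_\theta})\bigr)^{\frac{\a n+1}{n+1}}$ claimed in the statement. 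This step needs an entirely different argument (or the statement of $T^*$ must be reinterpreted as a maximal existence time rather than an explicit constant).

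Two further steps are misstated. In the entropy monotonicity, your ``capillary divergence identity'' $\int_{\C_\theta}K^{\a-1}h\,d\s=\int_{\C_\theta}K^{\a-1}\ell\,d\s$ is false (at $\a=1$ it reads $\int h\,d\s=\int\ell\,d\s$, which is not \eqref{eq-area-formula-Sigma-t}); the correct ingredients are the volume identity $\int_{\C_\theta}hK^{-1}\,d\s=(n+1)\mathrm{Vol}(\wh\S_t)=\int_{\C_\theta}\ell\,d\s$, the first-variation condition $\int_{\C_\theta}u_{z_e}^{-1/\a}\xi_i\,d\s=0$ at the $\a$-entropy point, and, since the square-completion of Proposition \ref{prop-mono} is special to $\a=1$, a H\"older/Cauchy--Schwarz inequality relating $\int u^{1-1/\a}\ell\,d\s$, $\int u^{-1/\a}K^{\a}\ell\,d\s$, $\int K^{\a-1}\ell\,d\s$ and $\int u\ell K^{-1}\,d\s$, which you must state and verify. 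Finally, you locate the restriction $\a>\frac{1}{n+2}$ in the interior-minimum computation for $\log(K^\beta u^\gamma)$, but in the Andrews--Guan--Ni scheme that computation closes for all $\a>0$ once the $C^0$ bounds are known; the restriction actually enters in bounding $\E_{\a,\theta}$ from below via the capillary Blaschke--Santal\'o inequality, where one needs the exponent $1-\frac1\a$ to be at least $-(n+1)$ in order to compare $\int u^{1-1/\a}\ell\,d\s$ with $\int u^{-(n+1)}\ell\,d\s$ by a power-mean inequality. As it stands, the non-collapsing estimate for small $\a$ is therefore unaddressed in your proposal.
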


The classification of solutions to Eq. \eqref{soliton-alpha-GCF} remains an open problem. It is natural to conjecture that, for $\a>\frac 1{n+2}$ and $\theta \in (0, \pi)$, the only strictly convex capillary hypersurface to Eq. \eqref{soliton-alpha-GCF} (without loss of generality, assuming the Lagrange multiplier $\lambda\equiv 1$) are the unit spherical cap, namely $\S_\infty \equiv \C_\theta$. We conclude the paper by formulating the following equivalent conjecture.

\begin{conjecture}
{\it   Given $\alpha > \frac{1}{n+2}$ and $\theta \in (0, \pi)$, suppose that $h$ is a positive, strictly convex function in $\C_\theta$ and solves
\begin{eqnarray}
\left\{
\begin{array}{llll}
	h\det^\a(\n^2 h+h\s) = \ell & \text{  in } ~~ \C_\theta , \\ 
	\n_\mu h= \cot\theta h & \text{ on }~~ \p \C_\theta,\end{array} \right.
\end{eqnarray}
then $h=\ell$.}  
 \end{conjecture}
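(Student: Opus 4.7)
The plan centers on the substitution $w := h/\ell$, which is natural because $\ell$ is itself the support function of $\C_{\theta}$; the conjecture then asserts exactly $w \equiv 1$. Since $\n_\mu \ell = \cot\theta\,\ell$ on $\p\C_{\theta}$ and the same Robin boundary condition is assumed for $h$, one immediately obtains the homogeneous Neumann condition $\n_\mu w = 0$ on $\p\C_{\theta}$. Using the identity $\n^2 \ell + \ell\s = I$ on $\C_{\theta}$, which follows from the fact that $\xi\mapsto \ell(\xi)=\sin^2\theta+\cos\theta\<\xi,e\>$ is affine in the ambient Euclidean space, a direct computation gives
\begin{equation*}
\n^2 h + h\s \;=\; wI + \ell\,\n^2 w + \n\ell \otimes \n w + \n w \otimes \n\ell,
\end{equation*}
so the soliton equation becomes a Monge--Amp\`ere problem for $w$ alone with reference solution $w \equiv 1$.

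The second step is a maximum-principle attempt on $w$ itself. At an interior maximum one has $\n w = 0$ and $\n^2 w \le 0$, so $\n^2 h + h\s \le w I$ and $\det(\n^2 h + h\s) \le w^n$; combined with the equation $h \det^\alpha(\n^2 h + h\s) = \ell$, this yields $w^{1+n\alpha}\ge 1$, hence $\max w \ge 1$, and symmetrically $\min w \le 1$. The Neumann condition together with Hopf's lemma rules out these extrema on $\p\C_{\theta}$ unless $w$ is constant. To force $\max w = \min w = 1$, the bare maximum principle is clearly too crude: one needs a refined pinching quantity --- for example something of the form $|\n w|^2 + c(w-1)^2$, or a weighted variant $\log w \cdot u^\gamma$ in the spirit of the test functions already used in the paper --- or, more ambitiously, a capillary adaptation of the tensor-based strong maximum principle from the closed-case work of Choi--Daskalopoulos \cite{CD} and Brendle--Choi--Daskalopoulos \cite{BCD}, in which the Neumann condition is used to absorb the boundary terms through a Hopf-type computation analogous to \eqref{boundary of Phi}.

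A parallel route is variational, via the capillary entropy $\E_{\theta}$ of Section~\ref{sec-4.2}. The soliton equation is, up to normalization, the Euler--Lagrange equation of $\E_{\theta}$ under a volume constraint, so the conjecture is equivalent to the assertion that $\widehat{\C_{\theta}}$ is the unique critical point of $\E_{\theta}$ among volume-normalized capillary convex bodies --- the capillary log-Brunn--Minkowski problem alluded to in the introduction. The strict monotonicity of $\E_{\theta}$ along the normalized flow (Proposition~\ref{prop-mono}) and the convergence Theorem~\ref{GCF-normal to soliton} already reduce the conjecture to ruling out spurious critical points of $\E_{\theta}$.

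The free-boundary case $\theta = \pi/2$ is considerably simpler and a natural starting point: there $\ell \equiv 1$ and $\n_\mu h \equiv 0$, so reflecting $h$ across $\p\RR^{n+1}_+$ produces a closed $C^{1,1}$ support function on $\SS^n$ that solves the classical Gauss soliton equation, and the results of \cite{CD, BCD} then give $h \equiv 1$ directly. The main obstacle for general $\theta \in (0, \pi)$ is precisely that this reflection trick breaks down --- the doubled body develops a conical corner along the equator --- so the closed classification cannot be invoked. I expect the core difficulty to be of the same order as the log-Brunn--Minkowski conjecture itself, which is open in the closed setting: one needs a pinching inequality strong enough to exclude non-radial critical points in the presence of the non-constant weight $\ell$ and the Robin boundary condition $\n_\mu h = \cot\theta\, h$.
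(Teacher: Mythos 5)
You have not proved the statement, and neither does the paper: this final statement is posed explicitly as an open conjecture (the capillary analogue of the classification of $\alpha$-Gauss solitons), so there is no proof of record to compare your argument against, and your proposal correctly refrains from claiming one. Your preliminary reductions are sound as far as they go. Writing $w=h/\ell$, the identity $\n^{2}\ell+\ell\s=I$ (Eq.~\eqref{ell ij}) and the relation $\n_\mu\ell=\cot\theta\,\ell$ (Eq.~\eqref{robin of xi,e}) do yield $\n_\mu w=0$ on $\p\C_\theta$ together with the decomposition
\begin{equation*}
\n^{2}h+h\s \;=\; wI+\ell\,\n^{2}w+\n\ell\otimes\n w+\n w\otimes\n\ell ,
\end{equation*}
and at an interior extremum of $w$ the equation gives $\det(\n^{2}h+h\s)=w^{-1/\alpha}$, so your conclusions $w\ge 1$ at an interior maximum and $w\le 1$ at an interior minimum are correct. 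The genuine gap is the one you yourself flag: these two inequalities point in opposite directions and pinch nothing, and no capillary substitute for the tensor maximum principle of Brendle--Choi--Daskalopoulos is actually constructed; the candidate quantities you list ($|\n w|^{2}+c(w-1)^{2}$, $\log w\cdot u^{\gamma}$) are not shown to satisfy a usable differential inequality or a good boundary condition, so the argument does not close. The variational route likewise only restates the problem, since ruling out non-spherical critical points of $\E_\theta$ under the weight $\ell$ is exactly the (open) capillary log-Brunn--Minkowski question mentioned in the introduction.

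The one piece of your proposal that amounts to a checkable partial result is the free-boundary case $\theta=\pi/2$: there $\ell\equiv1$, the homogeneous Neumann condition permits an even reflection of $h$ to $\SS^{n}$, and after a regularity step across the equator (the reflected support function is a priori only $C^{1,1}$, but strict convexity and Evans--Krylov upgrade it to smooth) the closed classification of \cite{BCD} applies. That observation is worth recording separately, but for general $\theta\in(0,\pi)$ the conjecture remains open, consistent with the paper's own framing.
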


\ 
\

\section{Appendix: \texorpdfstring{$\theta$}{}-capillary convex body} \label{sec-6}
In this section, we introduce the concept of $\theta$-capillary convex bodies (i.e., Definition \ref{defn-angle}) without assuming smoothness, in contrast to Section~\ref{sec2.1}, and define the associated capillary entropy functional as \eqref{eq-entropy-on-K}. We then investigate the limiting behavior of capillary convex bodies under convergence with respect to the Hausdorff distance. We believe that the results presented in this section are of independent interest.

In order to extend the definition of the contact angle for non-smooth convex body in $\ov{\RR^{n+1}_{+}}$ with one part of boundary lying on $\partial{\RR^{n+1}_{+}}$,  we first introduce the normal cone, which is a standard notion, see, e.g., \cite[Section 2]{LWW-2023}.
\begin{defn}[Normal cone]
Let $K$ be a convex body (i.e., a compact, convex set with non-empty interior) contained in $\ol{\RR^{n+1}_+}$. The normal cone of $K$ at the point $x\in \partial K$, denoted $\mathcal N_x K$, is defined by
\begin{eqnarray*}
\mathcal N_x K\coloneqq \left\{ \zeta \in \SS^n \mid \<\zeta,y - x\> \leq 0,~~\text{ for all } y \in K \right\}.
\end{eqnarray*}
\end{defn}
The classical support function $h_{z}$ of $K$ with respect to the point $z\in {\rm{int}}(K)\cup (K\cap \p\RR^{n+1}_+)$ is defined by 
\begin{eqnarray*}
    h_{z}(\zeta)\coloneqq \sup\limits_{y\in K}\<y-z, \zeta\>, \quad  {\rm{for~all}}~\zeta\in \SS^{n}.
    \end{eqnarray*}
To maintain notational consistency, as in Section \ref{sec2.1}, we shall continue to denote 
\begin{eqnarray*}
    u_{z}(\zeta)\coloneqq \frac{h_{z}(\zeta)}{\ell(\zeta)},
    \quad  {\rm{for~all}}~\zeta\in \SS^{n}.
\end{eqnarray*}where $\ell(\zeta)=1-\cos\theta \<\zeta, E_{n+1}\>$.
\begin{defn}\label{defn-general-capillary-convex-body}
  Let $K$ be a convex body contained in $\overline{\mathbb{R}^{n+1}_+}$. The boundary of $K$ consists of two components: one is $K_{u}\coloneqq \p  K\cap \RR^{n+1}_{+}$, and the other, denoted by $K_{f}\coloneqq \p K\cap \partial\ov{\RR^{n+1}_{+}}$, lies on $\partial \ov{\mathbb{R}^{n+1}_+}$. These two components share a common boundary, denoted by $\partial K_f$. We require that $K_u$ is nonempty and that $K_f$ contains interior points (relative to $\partial \mathbb{R}^{n+1}_+$).
We refer to such a $K$ as a convex body restricted to $\overline{\mathbb{R}^{n+1}_+}$. 
    \end{defn}
Denote by $\mathcal{G}$ the set of all convex bodies $K$ contained in $\overline{\mathbb{R}^{n+1}_{+}}$ that satisfy Definition \ref{defn-general-capillary-convex-body}. Next, we define the contact angle of a convex body $K \in \mathcal{G}$ along the contact boundary $\partial K_f$ within $\partial \mathbb{R}^{n+1}_{+}$.

\begin{defn}[$\theta$-capillary convex body]\label{defn-angle}
  Let $K\in \mathcal G$ and $x\in \p K_f$, we define the contact angle $\vartheta=\vartheta (x)\in (0,\pi)$ of $K$ at the point $x \in \partial K_{f}$  by 
  \begin{eqnarray*}
      \cos \vartheta= \sup\limits_{\zeta\in \mathcal N_{x}K}\<\zeta, E_{n+1}\>.
  \end{eqnarray*}
For all $x\in \p K_f$, if $\vartheta(x)\leq \theta$ for some fixed constant $\theta\in(0, \pi)$, then we call such $K$ the $\theta$-capillary convex body in $\ol{\RR^{n+1}_+}$. 
  \end{defn}
Denote by $\mathcal{G}_\theta$ the set of all $\theta$-capillary convex bodies in $\ol{\mathbb{R}^{n+1}_{+}}$.
Given $\theta \in (0,\pi\slash 2]$, we introduce an entropy functional for $K\in \mathcal{G}_\theta$, given by 
   \begin{eqnarray}\label{eq-entropy-on-K}
         \mathcal F_{\theta}(K)\coloneqq  \sup\limits_{z\in {\rm{int}}
         (K_{f})} \frac{1}{\omega_{\theta}}\int_{\SS^{n}_{\theta}}\left(\log u_{z}(\zeta)\right)\ell(\zeta)d\s,
     \end{eqnarray}
 In contrast to the entropy functional for classical convex bodies introduced by Guan-Ni \cite{GN} and Firey \cite{Firey}, which is defined on $\SS^{n}$, the entropy functional $\mathcal F_\theta(K)$ considered in this setting is only defined on the unit spherical cap
 $$\SS^{n}_{\theta}\coloneqq \C_{\theta}+\cos\theta E_{n+1}= \left\{\zeta\in \SS^{n}  ~\mid ~  \<\zeta, E_{n+1}\>\geq \cos\theta \right\}.$$ Such a modification is similar to Definition \ref{def-cap-entropy} for the capillary convex body  $\wh\S\in \K_\theta$. 
 This definition is well-suited for $\theta$-capillary convex bodies in $\G_\theta$, as demonstrated by the following property, which is crucial for proving Theorem \ref{thm-lim-Hausdorff-distance} later.

\begin{prop}\label{prop-small angle}
Let  $K \in \mathcal{G}_{\theta}$ with $\theta\in (0, \pi/2)$. Then $\mathcal F_{\theta}(K)$ is well-defined. Moreover, $\mathcal{F}_{\theta}(K)$ is attained by a unique interior point $z_{e}\coloneqq z_{e}(K)\in{\rm{int}}(K_{f})$.
 \end{prop}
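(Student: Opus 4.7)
The plan is to adapt the strategy used in Proposition \ref{prop-exist-cap-entropy-point} and Proposition \ref{lem entropy} to the non-smooth $\theta$-capillary setting, with four stages: well-definedness, existence of a maximizer, interiority of the maximizer, and uniqueness.

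First, I would establish that for every $z\in\mathrm{int}(K_f)$ the functional $\mathcal{F}_\theta(K,z)\coloneqq \frac{1}{\omega_\theta}\int_{\SS^n_\theta}(\log u_z(\zeta))\ell(\zeta)\,d\sigma$ is finite. The upper bound for $\log u_z$ follows from $u_z\le \mathrm{diam}(K)/\ell$, and hence is uniformly bounded above after weighting by $\ell$. For the lower tail, since $z\in\mathrm{int}(K_f)$ there exists $r>0$ with $B_r^+(z)\subset K$, which yields a uniform positive lower bound $u_z(\zeta)\ge c(r,\theta)>0$ on $\SS^n_\theta$. Consequently $\mathcal{F}_\theta(K,z)$ is finite, and taking the supremum over $z$ yields a finite or $+\infty$ value; an upper bound on $u_z$ independent of $z\in K_f$ (again by $\mathrm{diam}(K)$) shows $\mathcal{F}_\theta(K)<+\infty$.

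Next, for existence I would take a maximizing sequence $\{z_j\}\subset \mathrm{int}(K_f)$ with $\mathcal{F}_\theta(K,z_j)\to\mathcal{F}_\theta(K)$, and extract a subsequence converging to some $z_e\in K_f$ by compactness. Writing $u_{z_j}(\zeta)\le C$ uniformly and applying Fatou's lemma to the nonnegative integrand $\log(C/u_{z_j})\ell$ gives
\begin{equation*}
\frac{1}{\omega_\theta}\int_{\SS^n_\theta}\log\frac{C}{u_{z_e}(\zeta)}\ell(\zeta)\,d\sigma\le \liminf_{j\to\infty}\frac{1}{\omega_\theta}\int_{\SS^n_\theta}\log\frac{C}{u_{z_j}(\zeta)}\ell(\zeta)\,d\sigma,
\end{equation*}
which rearranges to $\mathcal{F}_\theta(K,z_e)\ge \mathcal{F}_\theta(K)$, so $z_e$ is a maximizer. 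The main obstacle is the third step: showing $z_e\in\mathrm{int}(K_f)$. Suppose for contradiction that $z_e\in\partial K_f$, which after a horizontal translation we may take to be the origin. Since $K_f$ is a convex body inside $\partial\RR^{n+1}_+$, there exists a unit vector $\zeta_0\in\RR^n\subset\partial\RR^{n+1}_+$ (the outward normal to $\partial K_f$ at the origin within $\partial\RR^{n+1}_+$), WLOG $\zeta_0=E_1$, such that the segment $L\coloneqq\{-tE_1\mid 0<t<t_0\}\subset\mathrm{int}(K_f)$. The key geometric input is that because $K\in\mathcal{G}_\theta$ with $\theta\in(0,\pi/2)$, Definition \ref{defn-angle} implies that every outward normal in $\mathcal{N}_0 K$ makes an angle at most $\theta<\pi/2$ with $E_{n+1}$, and in particular, combined with the constraint that $E_1\in\mathcal{N}_0 K$, the body $K$ lies in the half-space $\{x_1\le 0\}$ near $0$. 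Reflecting $\zeta=(\zeta_1,\ldots,\zeta_{n+1})\in\SS^n_\theta$ to $N(\zeta)\coloneqq(-\zeta_1,\zeta_2,\ldots,\zeta_{n+1})$ and mimicking the inequality \eqref{big} in Proposition \ref{pro-santalo} gives $u_{z_e}(N(\zeta))\ge u_{z_e}(\zeta)$ for $\zeta_1\ge 0$, strict on a set of positive measure. Differentiating $t\mapsto\mathcal{F}_\theta(K,-tE_1)$ at $t=0$ then yields
\begin{equation*}
\frac{d}{dt}\mathcal{F}_\theta(K,-tE_1)\Big|_{t=0}=\frac{1}{\omega_\theta}\int_{\SS^n_\theta}\frac{\zeta_1}{u_{z_e}(\zeta)}\,d\sigma>0,
\end{equation*}
contradicting the maximality of $z_e$. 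Hence $z_e\in\mathrm{int}(K_f)$.

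Finally, for uniqueness, since $z_e\in\mathrm{int}(K_f)$ is now known to be an interior maximizer, for any $z=z_e+t$ with $t=(t_1,\ldots,t_n,0)$ sufficiently small we have $u_z(\zeta)=u_{z_e}(\zeta)-\ell^{-1}(\zeta)\sum_i t_i\zeta_i>0$ on $\SS^n_\theta$. Setting $G(t)\coloneqq \mathcal{F}_\theta(K,z_e+t)$, a direct computation gives
\begin{equation*}
\frac{\partial^2 G}{\partial t_i\partial t_j}(t)=-\frac{1}{\omega_\theta}\int_{\SS^n_\theta}\frac{\zeta_i\zeta_j}{\ell(\zeta)\,u_z^2(\zeta)}\,d\sigma,
\end{equation*}
which is strictly negative definite (the quadratic form $\sum_{i,j}\xi_i\xi_j\frac{\partial^2 G}{\partial t_i\partial t_j}=-\omega_\theta^{-1}\int (\sum_i \xi_i\zeta_i)^2/(\ell u_z^2)\,d\sigma$ vanishes only when $\xi=0$, since the horizontal components $\zeta_1,\ldots,\zeta_n$ on $\SS^n_\theta$ are linearly independent functions). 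Hence $G$ is strictly concave on the convex set $\mathrm{int}(K_f)-z_e$, and the maximizer $z_e$ is unique. The hardest point remains the interiority step, where the $\theta$-capillary condition combined with $\theta<\pi/2$ is used in an essential way to convert the normal-cone information of Definition \ref{defn-angle} into the one-sided geometric localization of $K$ needed to run the reflection argument.
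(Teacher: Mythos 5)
Your overall architecture (finiteness, existence via Fatou, interiority via reflection, uniqueness via strict concavity) matches the paper's, which for the existence/interiority/uniqueness part simply invokes ``the same argument as in Propositions \ref{prop-exist-cap-entropy-point} and \ref{lem entropy}.'' Where you genuinely diverge is in the well-definedness step: you give an elementary pointwise argument (each $\mathcal F_\theta(K,z)$ is finite because $\mathrm{int}(K_f)\ni z$ forces $u_z\ge c>0$ on $\SS^n_\theta$, and the supremum is bounded above by a diameter bound). The paper instead reflects $K$ across $\p\RR^{n+1}_+$, checks that the reflected body $\wt K$ is convex --- this is where it uses $K\in\G_\theta$, $\theta<\pi/2$, via $\ov{\bigcup_{x\in K_u}\mathcal N_xK}\subset \SS^n_\theta$ --- and then combines the classical Blaschke--Santal\'o inequality with Jensen to get $\mathcal F_\theta(K)\ge -C(n,\rho_+(K),\mathrm{Vol}(K))$. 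Your route is shorter and suffices for the proposition as stated; the paper's buys a quantitative lower bound that is what is actually needed in the later compactness arguments (Theorem \ref{thm-lim-Hausdorff-distance}). Worth noting this trade-off.

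There is one genuine flaw in your interiority step. You assert that Definition \ref{defn-angle} ``implies that every outward normal in $\mathcal N_0K$ makes an angle at most $\theta<\pi/2$ with $E_{n+1}$.'' It does not: the contact angle is defined through $\cos\vartheta=\sup_{\zeta\in\mathcal N_xK}\<\zeta,E_{n+1}\>$, so $\vartheta\le\theta$ only guarantees the \emph{existence} of one supporting normal within angle $\theta$ of $E_{n+1}$; the normal cone at a point of $\p K_f$ generically also contains $-E_{n+1}$ and a whole fan of directions (this is already visible for $\wh{\C_\theta}$ itself). Moreover, your statement is inconsistent with your next clause ``combined with the constraint that $E_1\in\mathcal N_0K$,'' since a horizontal $E_1$ makes angle $\pi/2>\theta$ with $E_{n+1}$. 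The correct deduction, which rescues the step, is: take $\zeta_0\in\mathcal N_0K$ realizing the supremum, write $\zeta_0=\cos\vartheta\,E_{n+1}+\sin\vartheta\,\omega$ with $\omega$ horizontal and $\vartheta\le\theta<\pi/2$; then for every $y\in K\subset\ol{\RR^{n+1}_+}$ one has $\sin\vartheta\,\<\omega,y\>\le-\cos\vartheta\,y_{n+1}\le0$, so $K\subset\{\<\omega,\cdot\>\le0\}$ and $\omega\in\mathcal N_0K$ is automatically an outward normal of $K_f$ at $0$ in $\p\RR^{n+1}_+$; one must take $E_1=\omega$ (an arbitrary outward normal of the convex set $K_f$ at a corner point need not support all of $K$). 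With that substitution your reflection argument, and hence the whole proof, goes through at the same level of rigor as the paper's.
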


\begin{proof}
 We start by showing that $\mathcal{F}_{\theta}(K)$ is well-defined. It suffices to show that
\begin{eqnarray}\label{ineq-two-sides-est-F-theta}
    -C\leq \mathcal{F}_{\theta}(K)\leq C,
\end{eqnarray} 
where $C$ is a positive constant depending only on $n, \rho_{+}(\widehat{K})$ and ${\rm{Vol}}(K)$.  In fact, the upper bound in \eqref{ineq-two-sides-est-F-theta} follows directly  from 
\begin{eqnarray*}
    \mathcal{F}_{\theta}(K)\leq \frac{1}{\omega_{\theta}}\int_{\SS^{n}_{\theta}} \log \left(2\rho_{+}(K) \right)\ell(\zeta)
d\sigma =\log \left(2\rho_{+}({K}) \right).
\end{eqnarray*}
Next, we proceed to show the lower bound in \eqref{ineq-two-sides-est-F-theta}. Let $\widetilde K$ be the bounded domain by reflecting $K$ with respect to the hyperplane $\p\RR^{n+1}_+$. We \textbf{claim} that $\widetilde K$ is a bounded closed convex body in $\RR^{n+1}$. Since  $\bigcup\limits_{x\in K_{u}}\mathcal{N}_{x}K$ is  a connected component of $\SS^{n}$ and $E_{n+1}\in \bigcup\limits_{x\in K_{u}}\mathcal N_{x}K$, and from $K\in\mathcal{G}_{\theta}$, we have $\vartheta(x) \leq \theta$ for all $x\in \partial K_{f}$ with $\theta\in (0, \pi/2)$, then we conclude that
$$\ov{\bigcup\limits_{x\in K_{u}}\mathcal N_{x} K}\subset \SS^{n}_{\theta},$$ 
which implies the claim. We denote by $\widetilde z_{s}$ the classical Santal\'o point of  $\widetilde {K}$, using the symmetry of $\widetilde K$ with respect to $\p\RR^{n+1}_+$ and \eqref{sym}, it implies   
\begin{eqnarray}\label{eq-santalo-wilde-K-interior}
\widetilde z_{s}\in {\rm{int}}(K_{f}).
\end{eqnarray}
From \cite[Page 547]{Sch} or \cite[(1.1)]{San}, there holds
\begin{eqnarray}\label{eq-volume-polar-body}
    {\rm{Vol}}(\widetilde K_{ \widetilde z_{s}}^{\ast})=\frac{1}{n+1}\int_{\SS^{n}}\frac{1}{h_{ \widetilde z_{s}}^{n+1}(\zeta)}d\sigma,
\end{eqnarray}
where $\widetilde K_{ \widetilde z_{s}} ^{\ast}$ and $h_{\widetilde z_{s}}$ denote the polar body and support function of $\widetilde K$ with respect to the point $\widetilde z_{s}$, respectively. In view of \eqref{class-BS}, $\text{Vol}(\wt K)=2\text{Vol}(K)$ and \eqref{eq-volume-polar-body}, we obtain 
\begin{eqnarray}\label{zs upper}
    \int_{\SS^{n}}\frac{1}{h_{ \widetilde z_{s}}^{n+1}(\zeta)}d\sigma\leq \frac{ (n+1){\rm{Vol}}(\mathbb{B}^{n+1})^{2}}{2{ {\rm{Vol}}(K)}}.
\end{eqnarray}
Since $\ell(\zeta)=1-\cos\theta\<\zeta, E_{n+1}\><1$ on $\SS^{n}_{\theta}$, then by applying the Jensen's inequality as \eqref{Jen-ine}, it yields
\begin{eqnarray}\label{upper-zs}
    \exp\left(\frac{1}{\omega_{\theta}}\int_{\SS_{\theta}^{n}}  \log \left(\frac{1}{u_{ \widetilde z_{s}}^{n+1}(\zeta)}\right) \ell(\zeta)d\sigma  \right) &\leq& \frac{1}{\omega_{\theta}}\int_{\SS_{\theta}^{n}}\frac{1}{u_{ \widetilde z_{s}}^{n+1}(\zeta)}\ell(\zeta)d\s \notag \\
    &\leq& \frac{1}{\omega_{\theta}}\int_{\SS_{\theta}^{n}}\frac{1}{h_{\widetilde z_{s}}^{n+1}(\zeta)}d\sigma.
\end{eqnarray}
Inserting \eqref{zs upper} into \eqref{upper-zs}, we obtain 
$$\frac{1}{\omega_{\theta}} \int_{\SS^{n}_{\theta}}\left(\log u_{\widetilde z_{s}}(\zeta)\right)\ell(\zeta) d\sigma\geq -C,$$
 together with \eqref{eq-entropy-on-K} and \eqref{eq-santalo-wilde-K-interior},  it follows  that $$\mathcal{F}_{\theta}(K)\geq -C,$$ thus we complete the proof of \eqref{ineq-two-sides-est-F-theta}.
 Notice that $K_{f}$ is a bounded closed convex body in $\RR^{n}=\partial{\ov{\RR^{n+1}_{+}}}$, following the same argument as in the proof of Propositions \ref{prop-exist-cap-entropy-point} and \ref{lem entropy}, we can show there exists a unique interior point $z_{e}\in \text{int}( K_f)$, such that $\mathcal{F}_{\theta}(K)=\frac{1}{\omega_{\theta}}\int_{\SS^{n}_{\theta}}\left(\log u_{z_{e}}(\zeta)\right)\ell(\zeta)d\sigma$. This completes the proof.
\end{proof}

Without causing confusion, we continue to call $z_{e}\coloneqq z_{e}(K)$ in Proposition \ref{prop-small angle} the \textit{capillary entropy point} of the $\theta$-capillary convex body $K\in \mathcal{G}_{\theta}$ associated with $\F_\theta(K)$. We next show that, under Hausdorff convergence, the limiting convex body of a sequence of capillary convex bodies in $\K_\theta$ is contained in $\G_\theta$. That is, its contact angle within  $\p\R^{n+1}_+$ is non-increasing.

\begin{prop}\label{prop-angle-decrease}
   Let $\{\widehat{\S}_{k}\}_{k\geq 1}\subset \mathcal{K}_{\theta}$ be a sequence of bounded capillary convex body, and $\widehat{\S}_{\star}\in \mathcal{G}$.  Suppose that  $\lim\limits_{k\rightarrow +\infty}\widehat{\S}_{k}=\widehat{\S}_{\star}$ in the  Hausdorff distance sense,  then $\widehat{\S}_{\star}\in \mathcal G_{\theta}$.

\end{prop}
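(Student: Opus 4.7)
The goal is to verify that every $x_\star \in \partial(\wh\S_\star)_f$ satisfies $\vartheta(x_\star) \leq \theta$, which by Definition \ref{defn-angle} is equivalent to exhibiting a unit vector $\zeta \in \mathcal{N}_{x_\star}\wh\S_\star$ with $\<\zeta, E_{n+1}\> \geq \cos\theta$. The plan is to build such a $\zeta$ as a subsequential limit of the genuine capillary unit normals of the approximants $\S_k$; since each $\S_k$ satisfies the exact capillary condition $\<\nu_k, E_{n+1}\> = \cos\theta$ at its contact edge, this identity will be preserved in the limit.

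First, since $(\wh\S_\star)_f$ has nonempty relative interior in $\partial \R^{n+1}_+$ by virtue of $\wh\S_\star \in \G$, the Hausdorff convergence $\wh\S_k \to \wh\S_\star$ in $\R^{n+1}$ restricts to Hausdorff convergence of the flat components $(\wh\S_k)_f \to (\wh\S_\star)_f$ inside $\partial \R^{n+1}_+ \cong \R^n$. By the standard fact that boundary points of a Hausdorff limit of convex bodies (with interior) are approachable by boundary points of the approximants, for each $x_\star \in \partial(\wh\S_\star)_f$ one can select $x_k \in \partial(\wh\S_k)_f$ with $x_k \to x_\star$.

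Next, at each such $x_k \in \partial\S_k$, the smoothness of the capillary hypersurface $\S_k$ up to its edge gives a well-defined outer unit normal $\nu_k$, which lies in $\mathcal{N}_{x_k}\wh\S_k$ and, by the capillary condition \eqref{cap bdry condition}, satisfies $\<\nu_k, E_{n+1}\> = \cos\theta$. After passing to a subsequence we obtain $\nu_k \to \nu_\star \in \SS^n$ with $\<\nu_\star, E_{n+1}\> = \cos\theta$. Now invoke the closedness of normal cones under Hausdorff convergence: for any $y \in \wh\S_\star$, choose $y_k \in \wh\S_k$ with $y_k \to y$ and pass $\<y_k - x_k, \nu_k\> \leq 0$ to the limit to get $\<y - x_\star, \nu_\star\> \leq 0$; as $y$ is arbitrary, $\nu_\star \in \mathcal{N}_{x_\star}\wh\S_\star$. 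Hence $\cos\vartheta(x_\star) \geq \<\nu_\star, E_{n+1}\> = \cos\theta$, and the monotonicity of cosine on $(0,\pi)$ yields $\vartheta(x_\star) \leq \theta$.

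The main obstacle is the approximation step: one must ensure that the selected $x_k$ lie precisely on the contact edge $\partial(\wh\S_k)_f$ (and not in the relative interior of $(\wh\S_k)_f$ nor away from $\partial\R^{n+1}_+$), for only there does the capillary condition furnish the desired normal. This is exactly where the nonempty-interior hypothesis on $(\wh\S_\star)_f$ built into $\G$ is used, since it guarantees that the intersection with the hyperplane $\partial\R^{n+1}_+$ commutes with Hausdorff convergence and that the relative boundaries $\partial(\wh\S_k)_f$ converge to $\partial(\wh\S_\star)_f$ in the appropriate sense. Once this is secured, the remainder of the argument is routine convex analysis.
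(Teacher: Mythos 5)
Your overall strategy (approximate a point $x_\star$ of the contact edge of $\wh\S_\star$ by contact points $x_k\in\p\S_k$, pass the exact normals $\nu_k$ with $\<\nu_k,E_{n+1}\>=\cos\theta$ to a limit, and check the limit lies in $\mathcal N_{x_\star}\wh\S_\star$) is sound and is genuinely different from the paper's argument, which instead fixes a direction $\zeta\in\SS^n\setminus{\rm int}(\SS^n_\theta)$, uses the convergence of support functions $h_{\wh\S_k}\to h_{\wh\S_\star}$, and tracks the maximizers of $\<\cdot,\zeta\>$. However, the step you yourself flag as ``the main obstacle'' contains a genuine gap: it is \emph{not} true that intersection with the hyperplane $\p\RR^{n+1}_+$ commutes with Hausdorff convergence, nor that the nonempty relative interior of $(\wh\S_\star)_f$ guarantees $\p(\wh\S_k)_f\to\p(\wh\S_\star)_f$. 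The standard commutation fact requires the hyperplane to meet the \emph{interior} of the limit body; here $\p\RR^{n+1}_+$ is a supporting hyperplane, and the face map $K\mapsto F(K,-E_{n+1})$ is only upper semicontinuous. Concretely, take $\wh K_k=\{(x',x_{n+1}):|x|\le 1,\ x_{n+1}\ge \varepsilon_k\phi(x')\}$ with $\phi\ge 0$ convex and vanishing only at one point: then $\wh K_k\to$ the half-ball, whose flat face is the full disk, while $(\wh K_k)_f$ is a single point. So the hypothesis ``$(\wh\S_\star)_f$ has interior points'' alone cannot produce the points $x_k\in\p(\wh\S_k)_f$ converging to $x_\star$.

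What rescues the step is precisely the capillary structure of the approximants, which you use later for the normals but not here. At each contact point $p\in\p\S_k$ the tangent hyperplane supports $\wh\S_k$ with normal $\nu_k(p)$ whose horizontal part has length $\sin\theta>0$; this forces the following dichotomy for any $\delta>0$: if ${\rm dist}(x_\star,\p(\wh\S_k)_f)\ge\delta$ along a subsequence, then either $B_\delta(x_\star)\cap\p\RR^{n+1}_+\subset(\wh\S_k)_f$ (which passes to the limit and contradicts $x_\star\in\p(\wh\S_\star)_f$), or ${\rm dist}(x_\star,(\wh\S_k)_f)\ge\delta$; in the latter case, picking $y_k\in\wh\S_k$ with $y_k\to x_\star$ and the nearest contact point $p_k$ to $y_k'$, the supporting inequality $\<y_k-p_k,\nu_k(p_k)\>\le 0$ reads $\sin\theta\,|y_k'-p_k|\le -\<y_k,E_{n+1}\>\cos\theta\to 0$, contradicting $|y_k'-p_k|\ge\delta-o(1)$. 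Some argument of this kind (or the paper's support-function route, which sidesteps the localization entirely) must be supplied; as written, your proof reduces the proposition to an unproved and, in general, false convergence statement.
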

\begin{proof}
 We just need to show that \begin{eqnarray}\label{angle-decrease}
       \vartheta(x)\leq \theta, \quad \text{for all }x\in \partial (\widehat\S_{\star}\cap \p{ \RR^{n+1}_{+}}).
   \end{eqnarray}
By assumption and \cite[Lemma 1.8.14]{Sch}, we have
\begin{eqnarray}\label{sup-func}
    h_{\widehat\S _{k}}(\zeta)\coloneqq \sup\limits_{y\in \widehat{\S}_{k}}\<y, \zeta\>\rightarrow h_{\widehat{\S}_{\star}}(\zeta),  ~\forall~ \zeta\in \SS^{n}, \text{ as } k\to +\infty,
\end{eqnarray}
where $h_{\widehat{\S}_{k}}$ and $h_{\widehat{\S}_{\star}}$ are the classical support functions of $\widehat{\S}_{k}$ and 
$\widehat{\S}_{\star}$ with respect to the origin respectively.

Given $\zeta\in \mathbb{S}^{n}\setminus {\rm{int}}(\SS^{n}_{\theta})$, then for each $\wh\S_{k}\in\mathcal{K}_{\theta}$, there exists a point $x_{k}\in \partial\S_{k}$, such that $h_{\widehat{\S}_{k}}(\zeta)=\<x_{k}, \zeta\>$. Let $k\rightarrow +\infty$, we have $x_{k}\rightarrow x$ (possibly up to a subsequence) for some $x\in \partial (\widehat{\S}_{\star}\cap \p\ov{\RR^{n+1}_{+}})$. Together with\eqref{sup-func} implies 
$  \zeta \in \mathcal{N}_{x}\widehat{\S}_{\star}$. Hence we obtain 
\begin{eqnarray*}
    \mathbb{S}^{n}\setminus {\rm{int}}(\SS^{n}_\theta)\subset \bigcup_{x\in\partial (\widehat\S_{\star}\cap \p { \RR^{n+1}_{+}}) }\mathcal{N}_{x}\widehat{\S}_{\star}.
\end{eqnarray*}
By  Definition \ref{defn-angle}, we conclude that \eqref{angle-decrease} holds.
\end{proof}

We conclude the appendix by showing that for all capillary convex body $\widehat{\S}\in \mathcal{K}_{\theta}^{\circ}$, under natural constraints on ${\rm{Vol}(\widehat{\S})}$ and $\mathcal{E}_{\theta}(\widehat{\S})$, the capillary entropy point $z_e(\wh\S)$ of $\widehat{\S}$ will stay away from $\S$. This result will play a pivotal role in deriving the positive lower bound of the solution to flow \eqref{eq-scalar-capillary-support-normalized-GCF}.
We adapt a compactness argument as in \cite[Lemma 4.2]{GN} and \cite[Lemma 4.4]{AGN} for the general non-smooth setting. However, under Hausdorff convergence, the limiting convex body may not be a capillary convex body, i.e., the constant angle condition may not be preserved.  To address this difficulty, we will leverage the concept of $\theta$-capillary convex body introduced in this section and its associated properties. 

We first introduce a set of capillary convex bodies.
For a positive constant $A>0$, we denote  
\begin{eqnarray*}
    \Gamma_{A}\coloneqq\left\{\widehat\S\in \mathcal{K}_{\theta}^{\circ}\mid  
     ~{\rm{Vol}}(\widehat{\S})={\rm{Vol}}(\widehat{\C_{\theta}}), ~\mathcal{E}_{\theta}(\wh\S)\leq A 
    \right \}.
\end{eqnarray*}
Now we are ready to state the result.

\begin{thm}\label{thm-lim-Hausdorff-distance}
    Let $\theta\in (0, \pi/ 2)$. We have
    \begin{enumerate}
        \item Suppose that $\{\widehat\S_{k}\}_{k\geq 1}\subset  \Gamma_{A}$, and 
    $\lim\limits_{k\rightarrow +\infty}\widehat\S_{k}=\widehat\S_\star$  in Hausdorff distance sense. Then $\widehat{\S}_{\star}\in \mathcal{G}_{\theta}$ and 
    \begin{eqnarray}\label{lim}
        \lim\limits_{k\rightarrow +\infty}\mathcal{E}_{\theta}(\widehat{\Sigma}_{k})=\mathcal F_{\theta}(\wh{\S}_{\star}).
    \end{eqnarray} 
    \item  Then there exists a constant $\delta > 0$, depending only on $n$ and $A$, such that for all $\widehat{\S}\subset \Gamma_{A}$, the capillary entropy point $z_e(\widehat{\S})$ of $\widehat{\S}$ satisfies 
    \begin{eqnarray}\label{dist}
{\rm{dist}}\left(z_e(\widehat{\S}), \S\right) \geq \delta.
\end{eqnarray}
    
    \end{enumerate}
\end{thm}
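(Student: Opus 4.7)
Parts (1) and (2) are tightly coupled: (1) is a semicontinuity statement for the capillary entropy under Hausdorff convergence, while (2) follows from (1) by a compactness-contradiction argument that crucially uses the fact that the capillary entropy point of a $\theta$-capillary convex body lies in the interior of its flat boundary (Proposition \ref{prop-small angle}). The plan is to prove (1) by matching $\liminf$ and $\limsup$ of $\E_\theta(\wh\S_k)$, and then deduce (2) along a Hausdorff-convergent subsequence.

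\textbf{Proof of (1).} The inclusion $\wh\S_\star \in \G_\theta$ is immediate from Proposition \ref{prop-angle-decrease}, and Hausdorff convergence yields uniform convergence of the classical support functions $h_{\wh\S_k} \to h_{\wh\S_\star}$ on $\SS^n$ (cf.\ \cite{Sch}). For the lower bound, set $z_\star \coloneqq z_e(\wh\S_\star) \in {\rm{int}}((\wh\S_\star)_f)$ by Proposition \ref{prop-small angle}. For $k$ large, $z_\star \in {\rm{int}}(\wh{\p\S_k})$ and $u_{z_\star, \wh\S_k} \to u_{z_\star, \wh\S_\star} > 0$ uniformly on $\SS^n_\theta$, so dominated convergence gives $\E_\theta(\wh\S_k, z_\star) \to \F_\theta(\wh\S_\star)$, whence $\liminf \E_\theta(\wh\S_k) \geq \F_\theta(\wh\S_\star)$. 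For the upper bound, pass to a subsequence with $z_e(\wh\S_k) \to z_\infty \in (\wh\S_\star)_f$. The uniform upper bound $u_{z, \wh\S_k} \leq 2\rho_+(\wh\S_k, \theta) \leq C$ from Proposition \ref{radius} allows Fatou's lemma on $-\log u_{z_e(\wh\S_k), \wh\S_k}$ to yield $\limsup \E_\theta(\wh\S_k) \leq \F_\theta(\wh\S_\star, z_\infty)$ (where the right-hand side is $-\infty$ if the integral diverges). If $z_\infty \in \p(\wh\S_\star)_f$, then strict concavity of $z \mapsto \F_\theta(\wh\S_\star, z)$ on its effective domain (cf.\ Proposition \ref{prop-exist-cap-entropy-point}) together with the uniqueness of the interior maximizer $z_\star$ gives $\F_\theta(\wh\S_\star, z_\infty) < \F_\theta(\wh\S_\star)$, contradicting the lower bound already established. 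Hence $z_\infty \in {\rm{int}}((\wh\S_\star)_f)$, uniform convergence at $z_\infty$ applies, and $\E_\theta(\wh\S_k) \to \F_\theta(\wh\S_\star, z_\infty) \leq \F_\theta(\wh\S_\star)$. Combining the two bounds, $\lim_{k\to\infty} \E_\theta(\wh\S_k) = \F_\theta(\wh\S_\star)$.

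\textbf{Proof of (2).} Suppose for contradiction that $\{\wh\S_k\} \subset \Gamma_A$ satisfies ${\rm{dist}}(z_e(\wh\S_k), \S_k) \to 0$. Proposition \ref{radius} together with $\E_\theta(\wh\S_k) \leq A$ gives $\rho_+(\wh\S_k, \theta) \leq C(A)$, hence a uniform diameter bound since $\theta < \pi/2$. Together with ${\rm{Vol}}(\wh\S_k) = {\rm{Vol}}(\widehat{\C_\theta}) > 0$, the Blaschke selection theorem yields a subsequence $\wh\S_k \to \wh\S_\star$ in the Hausdorff distance, with $\wh\S_\star$ a nontrivial convex body in $\ol{\R^{n+1}_+}$. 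By part (1), $\wh\S_\star \in \G_\theta$ and $\F_\theta(\wh\S_\star) \leq A < +\infty$. A further diagonal extraction produces $z_e(\wh\S_k) \to z_\infty$, and the hypothesis ${\rm{dist}}(z_e(\wh\S_k), \S_k) \to 0$ combined with $\wh\S_k \to \wh\S_\star$ forces $z_\infty \in \S_\star \cap \p\R^{n+1}_+ = \p(\wh\S_\star)_f$. But this is precisely the boundary case excluded in the upper-bound analysis of (1), giving a contradiction. Therefore the required constant $\delta = \delta(n, A) > 0$ exists.

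\textbf{Main obstacle.} The central technical difficulty, shared between both parts, is ruling out escape of the capillary entropy points $z_e(\wh\S_k)$ to the boundary $\p(\wh\S_\star)_f$ under Hausdorff convergence. This is handled by combining Fatou's lemma (made possible by the uniform upper bound on the capillary support functions from Proposition \ref{radius}) with the strict concavity and uniqueness of the interior maximizer established in Proposition \ref{prop-small angle}. The uniform lower bound on the capillary entropy from the Blaschke-Santal\'o inequality (Proposition \ref{lem-low-entroy}) ensures that $\liminf \E_\theta(\wh\S_k)$ is finite, sealing the contradiction.
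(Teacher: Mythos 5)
Your proposal is correct and follows essentially the same route as the paper: Proposition \ref{prop-angle-decrease} for membership in $\mathcal{G}_\theta$, dominated convergence at the interior entropy point of $\wh\S_\star$ for the lower bound, reverse Fatou via the uniform bound $u\le 2\rho_+(\wh\S,\theta)\le C(A)$ for the upper bound, and Blaschke selection plus the interiority/uniqueness of the entropy point from Proposition \ref{prop-small angle} to derive the contradiction in (2). The only cosmetic difference is that you exclude the boundary limit point $z_\infty$ via strict concavity of $z\mapsto \F_\theta(\wh\S_\star,z)$, whereas the paper concludes that $q_0$ attains the supremum and therefore must coincide with the unique interior entropy point; these are equivalent uses of the same facts.
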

 
\begin{proof}

 \textit{(1)}
Since the volume is preserved along the Hausdorff convergence, this implies that $\widehat{\S}_{\star}$ cannot collapse into a lower-dimensional convex set, so we obtain $\widehat{\S}\in \mathcal{G}$. From Proposition \ref{prop-angle-decrease} above, we have $\widehat{\S}_{\star}\in \mathcal{G}_{\theta}$. Next, we proceed to prove \eqref{lim}.

From Proposition \ref{radius} and \eqref{double-control}, given $\widehat \S\in \Gamma_{A}$, there exists a positive constant $C_0$, depending only on $n$ and $A$, such that
\begin{eqnarray}\label{u-upper}
    u_z^{\S}\leq 2\rho_+(\wh\S,\theta) \leq 2 C e^{\E_\theta(\wh\S)} \leq 2Ce^A\eqqcolon C_0,
\end{eqnarray}
where $u_z^{\Sigma}$ is the capillary support function of $\widehat\S$ with respect to the point $z\in  \widehat{\partial \S}$.

 By
 Proposition \ref{prop-small angle}, we derive that the capillary entropy point $ z_e(\wh{\S}_\star)$ of $\widehat\S_\star$ lies in ${\rm{int}}(\widehat{\partial\S}_\star)$ with $\widehat{\partial \S}_{\star}\coloneqq \widehat{\S}_{\star}\cap \partial \ov{\RR^{n+1}_{+}}$, therefore, for $k$ sufficiently large, we have $ z_e({\wh\S_\star})\in {\rm{int}}(\widehat{\partial\S}_k)$. In view of \eqref{u-upper} and the dominated convergence theorem, we derive
\begin{eqnarray}\label{inf}
  \mathcal F_{\theta}(\wh\S_\star)&=&\frac{1}{\omega_{\theta}}\int_{\SS^{n}_{\theta}}\left(\log u_{z_e(\wh\S_\star)}^{\S_\star}(\zeta)\right)\ell (\zeta)d\s\notag \\
    &=&\lim\limits_{k\rightarrow\infty}\frac{1}{\omega_{\theta}}\int_{\C_{\theta}}\left(\log u_{  z_e(\wh\S_\star)}^{\S_{k}}(\xi)\right)\ell(\xi) d\s\leq \lim\limits_{k\rightarrow +\infty}\mathcal{E}_{\theta}(
    \wh\S_k ),~~~
\end{eqnarray}where $u_{ z_e(\wh\S_\star)}^{\Sigma_k}$ denote the capillary support function of $\widehat{\S}_k$ with respect to the point $ z_e(\wh{\S}_\star)$ and $u_{  z_e(\wh\S_\star)}^{\Sigma_\star}=\frac{h^{ \S_{\star}}_{{z}_{e}(\widehat\S_{\star})}}{\ell}$, where $h^{ \S_{\star}}_{{z}_{e}(\widehat\S_{\star})}$ is the classical support function of $\widehat{\S}_{\star}$ with respect to the point ${z}_{e}(\widehat{\S}_{\star})$.
On the other hand,  Proposition \ref{lem-low-entroy} implies 
\begin{eqnarray*}
 \frac{1}{\omega_{\theta}}\int_{\C_{\theta}}\log \left(\frac{u_{z_e(\wh\S_{k})}^{\S_{k}}(\xi)}{C_0}\right)\ell(\xi) d\s=\mathcal{E}_{\theta}(\wh\S_k )-\frac{1}{\omega_{\theta}}\int_{\C_{\theta}} (\log C_0) \ell(\xi) d\s\geq -C,
\end{eqnarray*}for some positive constant $C$, depending only on $n$ and $A$.
Together with \eqref{u-upper} again, it implies
\begin{eqnarray*}
  \frac{1}{\omega_{\theta}}\int_{\C_{\theta}}\left|\log \left(\frac{u^{\S_{k}}_{z_e(\wh\S_{k})}(\xi)}{C_0} \right)\ell(\xi) d\s \right|  d\s\leq C.
\end{eqnarray*}
Let $z_\star\coloneqq \lim\limits_{k\rightarrow +\infty}z_e({\wh\S_{k}})$. Since  the capillary entropy point $z_e({\wh\S_{k}})\in {\rm{int}}(\wh{\p\S}_k)$ and $\lim\limits_{k\rightarrow +\infty}\wh\S_k =\widehat\S_\star$, it follows that $z_\star\in \widehat{\partial\S}_\star$. By Fatou's Lemma, we obtain 
\begin{eqnarray}\label{upper-lim}
  \frac{1}{\omega_{\theta}}\int_{\SS^{n}_{\theta}}\log  \left(\frac{u^{\S_\star}_{z_\star}(\zeta)}{C_0}\right)\ell(\zeta) d\s \geq \limsup\limits_{k\rightarrow +\infty}\frac{1}{\omega_{\theta}}\int_{\C_{\theta}}\log \left(\frac{u_{z_e(\wh\S_{k})}^{\S_{k}}(\xi)}{C_0}\right)\ell(\xi) d\s,
\end{eqnarray}
which implies 
\begin{eqnarray}\label{sup}
    \mathcal{F}_{\theta}(\widehat{\S}_{\ast})\geq \limsup\limits_{k\rightarrow +\infty}\mathcal{E}_{\theta}(\wh\S_k ).
\end{eqnarray}
In view of \eqref{inf} and \eqref{sup}, we conclude that the assertion \eqref{lim} holds.

 \textit{(2)} Now we move on to prove \eqref{dist} by arguing via contradiction. Suppose there exists a sequence $\{\wh\S_{l}\}_{l\geq 1}\subset \Gamma_{A}$ such that
\begin{eqnarray}
    {\rm{dist}}\left(z_e(\wh\S_{l}), \S_{l}\right)\rightarrow 0,~{\rm{as}}~l\rightarrow \infty. \label{dist-1}
\end{eqnarray}
By the Blaschke selection theorem, see, e.g., \cite[Theorem 1.8.7]{Sch},
there exists a subsequence of $\{\wh\S_{l}\}_{l\geq 1}$, still denoted by $\{\wh\S_{l}\}_{l\geq 1}$, such that $\wh\S_l$ converges to $\wh\S_\star$ as $l\to +\infty$. Let $q_{0}\coloneqq\lim\limits_{l\rightarrow +\infty}z_e(\wh\S_{l})$. Since each $z_e(\wh\Sigma_{l})\in\widehat{\partial\S} _{l}\subset \p{\RR^{n+1}_{+}}$,  together with  \eqref{dist-1}, it yields that  \begin{eqnarray}\label{fact-q0-boundary}
    q_{0}\in \partial \S_\star.
\end{eqnarray}  Furthermore, from \eqref{lim} and \eqref{upper-lim}, we have
\begin{eqnarray*}
    \mathcal{F}_{\theta}(\widehat{\S}_{\star}) &=&\lim\limits_{l\rightarrow +\infty}\mathcal{E}_{\theta}(\wh\S_l )=\lim\limits_{l\rightarrow +\infty}\frac{1}{\omega_{\theta}}\int_{\C_{\theta}}\left(\log u^{\S_{l}}_{z_e(\wh\S_{l})}(\xi)\right)\ell(\xi) d\s \\&\leq & \frac{1}{\omega_{\theta}}\int_{\SS^{n}_{\theta}}\left(\log u_{q_{0}}^{\S_\star}(\zeta)\right)\ell(\zeta) d\s\leq \mathcal{F}_{\theta}(\widehat{\S}_{\star}),
\end{eqnarray*}
 which implies that $q_0$ is the  capillary entropy point of $\widehat{\S}_{\star}$. By applying Proposition \ref{prop-small angle}, we know that the point $q_0 \in \text{int}(\widehat{\partial \S}_\star)$. However, this contradicts \eqref{fact-q0-boundary}. Hence, we complete the proof.
\end{proof}

\bigskip

\bigskip

\noindent\textit{Acknowledgment:} X.M. was supported by the National Key R $\&$ D Program of China (No. 2020YFA0712800) and the Postdoctoral Fellowship Program of CPSF under Grant Number GZC20240052.  L.W. was partially supported by CRM De Giorgi of Scuola Normale Superiore and PRIN Project 2022E9CF89 of the University of Pisa. 

\bigskip

%{\footnotesize  \subsection*{Data Availability Statement} The sharing of data is not applicable to this article since no datasets were generated or analyzed during the current study.  \subsection*{Competing interests} The authors have no competing interests to declare that are relevant to the content of this article.}
\printbibliography

\end{document}